\theoremstyle{plain}
\newtheorem{lem}{Lemma}[section]
\newtheorem{thm}{Theorem}
\theoremstyle{remark}
\newtheorem{rem}[lem]{Remark}
\theoremstyle{definition}
\newtheorem{defi}[lem]{Definition}
\providecommand{\abs}[1]{\lvert#1\rvert} 
\providecommand{\norm}[1]{\lVert#1\rVert}
\DeclareMathOperator{\ud}{\!d\!}
\DeclareMathOperator{\Ud}{D}
\DeclareMathOperator{\supp}{supp}
\DeclareMathOperator{\esssup}{ess\, sup}
\DeclareMathOperator{\Div}{div}
\DeclareMathOperator{\Rot}{rot}
\DeclareMathOperator{\Dt}{\frac{\ud}{\ud t}}
\DeclareMathOperator{\const}{const}
\DeclareMathOperator{\Id}{Id}
\numberwithin{equation}{section} 
\begin{document}

\title[Strong solutions to micropolar equations]{Large time existence of strong solutions to micropolar equations in cylindrical domains}
\author{Bernard Nowakowski}

\thanks{The author is partially supported by Polish KBN grant N N201 393137}
\address{Bernard Nowakowski\\ Institute of Mathematics\\ Polish Academy of Sciences\\ \'Snia\-deckich 8\\ 00-956 Warsaw\\ Poland}
\email{bernard@impan.pl}

\subjclass[2000]{35Q35, 35D35, 76D03}

\date{August, 2011}

\begin{abstract}
	We examine the so-called micropolar equations in three dimensional cylindrical domains under Navier boundary conditions. These equations form a generalization of the ordinary incompressible Navier-Stokes model, taking the structure of the fluid into account. We prove that under certain smallness assumption on the rate of change of the initial data and the external data there exists a unique and strong solution for any finite time $T$. 
\end{abstract}

\keywords{micropolar fluids, cylindrical domains, strong solutions, Navier boundary conditions}
\maketitle

\section{Introduction}

Micropolar equations, which were suggested and introduced by A. Eringen in 1966 (see \cite{erin}), are a significant step toward generalization of the standard Navier-Stokes model, which describes motion of viscous and incompressible fluid. By their very nature, the equations suggested by C.L. Navier and shortly afterward complemented and mathematically formalized by G.G. Stokes, do not take into account the structure of the media they describe, although it plays crucial role in modeling for some well-known fluids, e.g. animal blood or liquid crystals (see \cite{pop1, pop2}). The deviancy becomes highly apparent in microscales. 

In this wok we base on the model proposed by A. Eringen in \cite{erin}. It takes into account that the molecules may rotate independently of the fluid rotation. Thus, the standard Navier-Stokes system is complemented with another vector equation and the whole system of equations is given by

\begin{equation}
	\begin{aligned}\label{p1}
		&v_{,t} + v\cdot \nabla v - (\nu + \nu_r)\triangle v + \nabla p = 2\nu_r\Rot\omega + f   & &\text{in } \Omega^{t} := \Omega\times(t_0,t),\\
		&\Div v = 0 & &\text{in } \Omega^{t},\\
		&\begin{aligned}
			&\omega_{,t} + v\cdot \nabla \omega - \alpha \triangle \omega - \beta\nabla\Div\omega + 4\nu_r\omega \\
			&\mspace{60mu} = 2\nu_r\Rot v + g
		\end{aligned}& &\text{in } \Omega^{t},
    \end{aligned}
\end{equation}
where $\alpha = c_a + c_d$, $\beta = c_0 + c_d - c_a$ and the viscosity coefficients $\nu$, $\nu_r$, $c_0$, $c_a$, $c_d$ are constant and positive. The unknowns are the velocity field $v = (v_1(x,t),v_2(x,t),v_3(x,t)) \in \mathbb{R}^3$, the microrotation field $\omega = (\omega_1(x,t),\omega_2(x,t),\omega_3(x,t)) \in \mathbb{R}^3$ and the pressure $p = p(x,t) \in \mathbb{R}^1$. 

To consider the above problem as initial-boundary we need to specify the domain, the boundary and the initial conditions. We shall now describe them in greater detail.

\subsection*{The domain}
To describe the domain $\Omega$ consider a closed curve $\varphi\colon \mathbb{R}^2 \to \mathbb{R}$, $\varphi(x_1,x_2) = \const$ which is at least of class $\mathcal{C}^2$. We do not put any additional constraints on the properties or on the shape of $\varphi$. Note that if $\varphi$ has a center of symmetry, then it is more natural to investigate problem \eqref{p1} in cylindrical coordinate system $(r,\varphi,z)$ (see e.g. \cite{wm4}, \cite{wm7}). However, such an approach results in technical difficulties related to weighted spaces. 

Having $\varphi$ defined we define the domain $\Omega$ as a set 
\begin{equation*}
	\left\{(x_1,x_2)\in \mathbb{R}^2\colon \varphi(x_1,x_2) \leq c_0\right\}\times\left\{x_3\colon -a \leq x_3 \leq a\right\},
\end{equation*}
where the constant $a > 0$. It is clear that $\Omega$ is a finite and regular pipe placed alongside the $x_3$-axis. 

The boundary $\partial \Omega$ will be denoted by $S$ for convenience. This set is composed of two sets, $S_1$ and $S_2$, $S = S_1 \cup S_2$, where by $S_1$ we denote the side boundary and by $S_2$ the top and the bottom of the cylinder. Thus
\begin{align*}
    S_1 &= \{x\in \mathbb{R}^3\colon\varphi(x_1,x_2)=c_0,\ -a<x_3<a\}\\
\intertext{and}
    S_2 &= \{x\in \mathbb{R}^3\colon\varphi(x_1,x_2)<c_0, \text{ $x_3$ is equal either to $-a$ or to $a$}\}.
\end{align*}

We draw the distinction between $S_1$ and $S_2$, because some boundary conditions to auxiliary problems will be expressed by significantly different formulas.

By simple computation we immediately get
\begin{equation}\label{p4}
	\begin{aligned}
		&n\vert_{S_1} = \frac{1}{\abs{\nabla \varphi}}(\varphi_{,x_1},\varphi_{,x_2},0) & &\tau_1\vert_{S_1} = \frac{1}{\abs{\nabla \varphi}}(-\varphi_{,x_2},\varphi_{,x_1},0) & & \tau_2\vert_{S_1} = (0,0,1)\\
		&n\vert_{S_2} = \left(0,0,\frac{a}{\abs{a}}\right) & &\tau_1\vert_{S_2} = (1,0,0) & &\tau_2\vert_{S_2} = (0,1,0),
	\end{aligned}
\end{equation}
where $n$, $\tau_i$, $i = 1,2$, are the unit outward normal and the unit tangent vectors respectively.

Let us now justify the choice of the domain. Since the problem of uniqueness (or regularity) of weak solutions for Navier-Stokes equations in three dimensions is open, several alternative approaches were taken. One of them is intensely focused upon search for such solutions that are close to two dimensional (see e.g. \cite{ren1}, \cite{wm2}, \cite{wm6}). It is also our case. Therefore, the solutions which are proved to exist, can be regarded as a slight perturbation of two dimensional micropolar flow along the perpendicular direction. This perturbation will be somehow measured by $\delta(t)$ (see \eqref{eq26}), which we introduce later. We shall emphasize that since we only require the initial rate of change of the flow and microrotation, as well as the derivatives of the external data with respect to $x_3$ to be small, the flow alongside the cylinder can be large, but close to constant. 

\subsection*{The boundary and initial conditions}

We supplement system \eqref{p1} with the boundary conditions of the form
\begin{equation}
		\begin{aligned}\label{p2}
			&v\cdot n = 0 & &\text{on $S^{\infty} := S\times(t_0,t)$}, \\
			&\Rot v \times n = 0  & &\text{on $S^{t}$}, \\
			&\omega = 0 & &\text{on $S_1^{t}$}, \\
			&\omega' = 0, \qquad \omega_{3,x_3} = 0 & &\text{on $S_2^{t}$},
    \end{aligned}    
\end{equation}
where $n$ is the unit outward vector. 

Let us briefly justify our choice. Our proof of the existence of regular solutions to \eqref{p1} uses an estimate for the third component of the vorticity field (see Lemmas \ref{lem9} and \ref{lem7}) which is equal to $v_{2,x_1} - v_{1,x_2}$. Thus, the Dirichlet condition $v = 0$ is not admissible because it does not provide any information concerning derivatives of $v$ on the boundary. Therefore we decided to employ slip boundary condition $\Rot v \times n = 0$, which was already considered by C.L. Navier in 1827 (and is often referred as the Navier boundary condition; see \cite{nav}) and satisfies the relation (see Lemma \ref{lem32}) $n\cdot \mathbb{D}(v) \cdot \tau_{\alpha} + 2 \kappa (2 - \alpha)(v\cdot \tau_1) = \Rot v\times n \cdot \tau_{\alpha}$, where $n$ and $\tau_{\alpha}$, $\alpha \in \{1,2\}$, denote the unit normal and tangent vectors and $\mathbb{D}(v)$ is a dilatation tensor equal to $\frac{1}{2}\left(\nabla v + \nabla^{\perp} v\right)$. The function $\kappa$ represents the curvature of $S$. From the physical point of view it may be interpreted as tangential ``slip'' velocity being proportional to tangential stress with a factor of proportionality depending only on the curvature (see e.g. \cite{kell}, \cite{Clopeau:1998vj}).

Nevertheless, under Dirichlet condition it is still possible to prove the existence of regular solution by adopting different technique (see e.g. \cite{bol}). From mathematical perspective, the boundary conditions which shall complement problem \eqref{p1} have to provide the energy estimates. Although not for every available choice of these boundary conditions the existence of regular solutions has been proved, but it does not mean that such proofs will never appear.

As for the initial data we simply put
\begin{equation}\label{p7}
	v\vert_{t = t_0} = v(t_0), \qquad \omega\vert_{t = t_0} = \omega(t_0) \quad \text{in $\Omega$}.
\end{equation} 

\section{Notation}\label{sec2.1}

Before we formulate the main result let us shortly clarify the notation deployed throughout this work.

The most frequently used notation in the sequel will be $\Omega^t$, which denotes the product $\Omega\times(t_0,t)$. Unless stated directly, we only assume that $0 \leq t_0 < t < \infty$.

By $c$ we denote a generic constant that may change from line to line. Additionally, such constants are subscripted with appropriate symbols, which indicate the dependence on the domain, embedding theorems, etc. The possible values are listed below:
	\begin{description}
	\item[$c_{\alpha,\beta,\nu,\nu_r}$] appears when the constant $c$ depends on the viscosity coefficients,
	\item[$c_{I}$] refers to embedding theorems (e.g. $H^1(\Omega) \hookrightarrow L_6(\Omega)$), 
	\item[$c_P$] refers to the Poincar\'e inequality,
	\item[$c_{\Omega}$] indicates the direct dependence on the geometry of the domain $\Omega$.
\end{description} 
Our motivation to keep the information which factors contribute to the constants is caused by the necessity of precise control of their dependence with respect to time. Time dependent constants would surely have a negative impact on the proof of global in time solutions. We do not say that such proof would not be possible, but unquestionably much harder. Note also, that since the Poincar\'e constant and the embedding constant depend on the domain, we could write $\Omega$ instead of $P$ and $I$ every time they appear. We decided not to make such generalization in order to keep the passage from line to line readable and clear. 

Throughout this work we shall use the following notation to simplify the formulas:
\begin{align*}
	h &= v_{,x_3}, & & & \theta &= \omega_{,x_3}.
\end{align*}

In energy estimates the initial and the external data in $L_p$-norms will appear. Therefore we introduce the following quantities to shorten formulas:

\begin{equation}\label{eq14}
	\begin{aligned}
		E_{v,\omega}(t) &:= \norm{f}_{L_2(t_0,t;L_{\frac{6}{5}}(\Omega))} + \norm{g}_{L_2(t_0,t;L_{\frac{6}{5}}(\Omega))} + \norm{v(t_0)}_{L_2(\Omega)} + \norm{\omega(t_0)}_{L_2(\Omega)}, \\
		E_{h,\theta}(t) &:= \norm{f_{,x_3}}_{L_2(t_0,t;L_{\frac{6}{5}}(\Omega))} + \norm{g_{,x_3}}_{L_2(t_0,t;L_{\frac{6}{5}}(\Omega))} + \norm{h(t_0)}_{L_2(\Omega)} + \norm{\theta(t_0)}_{L_2(\Omega)}.
	\end{aligned}
\end{equation}
The following function will be of particular interest
\begin{equation}\label{eq26}
	\delta(t) := \norm{f_{,x_3}}^2_{L_2(\Omega^t)} + \norm{g_{,x_3}}^2_{L_2(\Omega^t)} + \norm{\Rot h(t_0)}^2_{L_2(\Omega)} + \norm{h(t_0)}^2_{L_2(\Omega)} + \norm{\theta(t_0)}^2_{L_2(\Omega)}.
\end{equation}
This is the most important quantity since it expresses the smallness assumption which has to be made in order to prove the existence of global and regular solutions. It contains no $L_2$-norms of the initial or the external data but only $L_2$-norms of their derivatives alongside the axis of the pipe. In other words the data need not to be small but small must be their rate of change. 

Also note, that if we considered Navier-Stokes equations only and the external data were missing, $h(t_0)$ had a gradient structure (i.e. $h(t_0) = \nabla A$, $A\colon \mathbb{R}^3 \to \mathbb{R}$; problem for $h$ is demonstrated in Lemma \ref{p5}), then $\delta(t)$ would be equal to zero. Although $h(t_0)$ in $L_2$-norm appears it can be estimated by $\Rot h(t_0)$ (see Lemma \ref{l2}). The same would also hold for $f_{,x_3}$ with a gradient structure. 

From time to time we use the rotation of vector field. By $\Rot F$, where $F\colon \mathbb{R}^3 \to \mathbb{R}^3$, we denote
\begin{equation*}
	\Rot F = [F_{3,x_2} - F_{2,x_3}, F_{1,x_3} - F_{3,x_1}, F_{2,x_1} - F_{1,x_2}].
\end{equation*}

To define certain functions spaces that will be used frequently in the sequel we simply follow \cite[Ch. 3, \S 1.1]{luk1}, \cite[Ch. 2, \S 3]{lad} and \cite[Ch. 1, \S 1.1]{tem}: 

\begin{enumerate}
	\item[$\bullet$] $L_p(\Omega)$ is the set of all Lebesgue measurable function $u\colon \Omega \to \mathbb{R}^n$ with the norm
		\begin{equation*}
			\norm{u}_{L_p(\Omega)} = \left(\int_{\Omega} \abs{u}^p\, \ud x\right)^{\frac{1}{p}},
		\end{equation*}
	\item[$\bullet$] $W^m_p(\Omega)$, where $m \in \mathbb{N}$, $p \geq 1$, is the closure of $\mathcal{C}^{\infty}(\Omega)$ in the norm
		\begin{equation*}
			\norm{u}_{W^m_p(\Omega)} = \left(\sum_{\abs{\alpha} \leq m} \norm{\Ud^{\alpha} u}_{L_p(\Omega)}^p\right)^{\frac{1}{p}},
		\end{equation*}
	\item[$\bullet$] $H^k(\Omega)$, where $k \in \mathbb{N}$, is simply $W^k_2(\Omega)$, 
	\item[$\bullet$] $W^{2,1}_p(\Omega^t)$, where $p \geq 1$, is the closure of $\mathcal{C}^{\infty}(\Omega\times(t_0,t_1))$ in the norm
		\begin{equation*}
			\norm{u}_{W^{2,1}_p(\Omega^t)} = \left(\int_{t_0}^{t_1}\!\!\!\int_{\Omega} \abs{u_{,xx}(x,s)}^p + \abs{u_{,x}(x,s)}^p + \abs{u(x,s)}^p + \abs{u_{,t}(x,s)}^p\, \ud x\, \ud s\right)^{\frac{1}{p}},
		\end{equation*}
	\item[$\bullet$] $H^1_0(\Omega)$ is the closure of $\mathcal{C}^{\infty}_0(\Omega)$ in the norm
		\begin{equation*}
			\norm{u}_{H^1_0(\Omega)} = \left(\int_{\Omega} \abs{\nabla u(x)}^2\, \ud x\right)^{\frac{1}{2}},
		\end{equation*}
	\item[$\bullet$] $L_q(t_0,t_1;X)$, where $q \geq 1$ and $X$ is a Banach space, is the set of all strongly measurable functions defined on the interval $[t_0,t_1]$ with values in $X$ with finite norm defined by
		\begin{equation*}
			\norm{u}_{L_q(t_0,t_1;X)} = \left(\int_{t_0}^{t_1}\norm{u(s)}_X^q\, \ud s\right)^{\frac{1}{q}},
		\end{equation*}
		where $1 \leq p < \infty$ and by
		\begin{equation*}
			\norm{u}_{L_{\infty}(t_0,t_1;X)} = \underset{t_0\leq s \leq t_1}{\esssup}\norm{u(s)}_X,
		\end{equation*}
		for $q = \infty$,
	\item[$\bullet$] $V^k_2(\Omega^t)$, where $k \in \mathbb{N}$, is the closure of $\mathcal{C}^{\infty}(\Omega\times(t_0,t_1))$ in the norm
		\begin{equation*}
			\norm{u}_{V^k_2(\Omega^t)} = \underset{t\in (t_0,t_1)}{\esssup}\norm{u}_{H^k(\Omega)} \\
    +\left(\int_{t_0}^{t_1}\norm{\nabla u}^2_{H^{k}(\Omega)}\, \ud t\right)^{1/2}.
		\end{equation*}
\end{enumerate}

\section{Weak and strong solutions}\label{sec3.0}

\begin{defi}\label{defi1}
	Let $v(t_0) \in L_2(\Omega)$ and $\omega(t_0) \in L_2(\Omega)$. By a weak solution to problem \eqref{p1} complemented with the boundary conditions \eqref{p2} we mean a pair of functions $(v,\omega)$ such that $v,\omega\in V_2^0(\Omega^{t_1})$ (for definition of $V_2^0(\Omega^t)$ see Section \ref{sec2.1}) and $\Div v = 0$, $v\cdot n\vert_S = 0$, which satisfies the integral identities
	\begin{subequations}
	\begin{multline}\label{eq34}
		\int_{\Omega^{t_1}} \big( -v\cdot \varphi_{,t} + (\nu + \nu_r)\Rot v \cdot \Rot \varphi + (v\cdot\nabla) v\cdot\varphi\big)\, \ud x\, \ud t \\
		+ \int_{\Omega} v\cdot\varphi\vert_{t = t_1}\, \ud x - \int_{\Omega}v\cdot\varphi\vert_{t = t_0}\, \ud x 
		= \int_{\Omega^{t_1}} \big(2\nu_r\Rot \omega\cdot \varphi + f\cdot \varphi\big)\, \ud x\, \ud t
	\end{multline}
	for any $\varphi \in H^1(\Omega^{t_1})$ such that $\Div \varphi = 0$, $\varphi \cdot n = 0$ and
	\begin{multline}\label{eq35}
		\int_{\Omega^{t_1}} \big( - \omega\cdot\psi_{,t} + \alpha\nabla \omega\cdot \nabla \psi + \beta \Div \omega\Div \psi + (v\cdot \nabla)\omega \cdot \psi + 4\nu_r \omega\cdot\psi\big)\, \ud x\, \ud t \\
		+ \int_{\Omega} \omega\cdot\psi\vert_{t = t_1}\, \ud x - \int_{\Omega} \omega\cdot \psi\vert_{t = t_0}\, \ud x 
		= \int_{\Omega^{t_1}} \big(2\nu_r\Rot v\cdot \psi + g\cdot \psi\big)\, \ud x\, \ud t
	\end{multline}
	\end{subequations}
	for any $\psi \in H^1(\Omega^{t_1})$ such that $\psi\vert_{S_1} = 0$ and $\psi'\vert_{S_2} = 0$, $\psi_{3,x_3}\vert_{S_2} = 0$.
\end{defi}

The existence of weak solutions is assured by the following result:
\begin{lem}\label{lem20}
	There exist at least one weak solution to problem \eqref{p1} in the sense of the above definition. 
\end{lem}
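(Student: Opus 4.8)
The plan is to construct the solution by the Faedo--Galerkin method and then pass to the limit using compactness. First I would choose a basis $\{a_k\}$ for the space of divergence-free fields satisfying $v\cdot n\vert_S = 0$ (for instance the eigenfunctions of the associated Stokes-type operator under the Navier condition $\Rot v\times n = 0$) together with a basis $\{b_k\}$ adapted to the split boundary conditions $\omega\vert_{S_1}=0$, $\omega'\vert_{S_2}=0$, $\omega_{3,x_3}\vert_{S_2}=0$ for the microrotation. Seeking approximate solutions $v^N=\sum_{k=1}^N c_k^N(t)a_k$ and $\omega^N=\sum_{k=1}^N d_k^N(t)b_k$ and projecting \eqref{p1} onto these finite-dimensional spaces yields a system of ordinary differential equations for $(c^N,d^N)$ whose local-in-time solvability follows from the Carath\'eodory--Picard theorem.

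The crucial step is the a priori energy estimate, uniform in $N$, which will simultaneously guarantee that the approximate solutions are global on $(t_0,t_1)$. Testing the velocity equation with $v^N$ and the microrotation equation with $\omega^N$ and adding, the convective terms vanish by $\Div v^N=0$ together with $v^N\cdot n\vert_S=0$. The viscous term $(\nu+\nu_r)\triangle v^N$ produces $(\nu+\nu_r)\norm{\Rot v^N}_{L_2(\Omega)}^2$ with no boundary contribution, owing to $\Rot v\times n=0$ and $v\cdot n=0$, while the microrotation dissipation gives $\alpha\norm{\nabla\omega^N}_{L_2(\Omega)}^2+\beta\norm{\Div\omega^N}_{L_2(\Omega)}^2+4\nu_r\norm{\omega^N}_{L_2(\Omega)}^2$. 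The two coupling terms combine, after integration by parts, into $4\nu_r\int_\Omega \omega^N\cdot\Rot v^N\, \ud x$, which I would split by Young's inequality and absorb partly into $(\nu+\nu_r)\norm{\Rot v^N}_{L_2(\Omega)}^2$ and partly into $4\nu_r\norm{\omega^N}_{L_2(\Omega)}^2$; the external data are controlled by $\norm{f}_{L_{6/5}(\Omega)}\norm{v^N}_{L_6(\Omega)}\le c_I\norm{f}_{L_{6/5}(\Omega)}\norm{v^N}_{H^1(\Omega)}$ and the embedding $H^1\hookrightarrow L_6$, after invoking the inequality $\norm{\nabla v}_{L_2(\Omega)}\le c_\Omega\bigl(\norm{\Rot v}_{L_2(\Omega)}+\norm{v}_{L_2(\Omega)}\bigr)$ valid for divergence-free fields with $v\cdot n\vert_S=0$. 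Gronwall's inequality then bounds $v^N,\omega^N$ in $L_\infty(t_0,t_1;L_2(\Omega))\cap L_2(t_0,t_1;H^1(\Omega))$, that is, in $V_2^0(\Omega^{t_1})$, uniformly in $N$.

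With these bounds in hand, I would derive a uniform estimate on the time derivatives $v^N_{,t},\omega^N_{,t}$ in a negative-order space compatible with the bases and invoke the Aubin--Lions--Simon lemma to extract a subsequence converging strongly in $L_2(\Omega^{t_1})$ and weakly in $L_2(t_0,t_1;H^1(\Omega))$. The strong $L_2$ convergence is exactly what is needed to pass to the limit in the quadratic terms $(v^N\cdot\nabla)v^N$ and $(v^N\cdot\nabla)\omega^N$; all the remaining, linear terms pass by weak convergence, and the limit is seen to satisfy \eqref{eq34}--\eqref{eq35} first for test functions spanned by the bases and then, by density, for every admissible $\varphi,\psi$. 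The hard part will not be the limit passage itself but rather the construction of a Galerkin basis for $\omega$ that respects the \emph{mixed} boundary conditions on $S_1$ and $S_2$ while still diagonalizing the relevant elliptic operator, together with the careful verification that the boundary integrals arising both from the Navier condition and from the integration by parts in the coupling term genuinely vanish.
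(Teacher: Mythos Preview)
Your proposal is correct and follows essentially the same approach as the paper: Galerkin approximations, the a priori energy estimate of Lemma~\ref{l4}, and the compactness method. The paper's own proof is in fact just a one-line reference to this standard scheme (citing \cite{roj1}, \cite{luk1}, \cite{luk2}); your outline supplies the details the paper omits.
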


\begin{proof}
	The proof is quite standard and is based on Galerkin approximations, a priori estimates (see Lemma \ref{l4}) and compactness method. It can be found in \cite{roj1} for general case or in \cite[Ch. 4, Theorem 1.6.1]{luk1}, \cite[Theorem 2.1]{luk2}.
\end{proof}

For the strong (or regular) solutions there are various definitions, which are equivalent. For our purposes we will use the following
\begin{defi}
	By a strong (or regular) solution to problem \eqref{p1} we mean a pair of functions $(v,\omega)$ such that $(v,\omega)\in V_2^1(\Omega^{t_1})\times V_2^1(\Omega^{t_1})$ (for the definition of $V_2^1(\Omega^t)$ see Section \ref{sec2.1}) satisfying \eqref{eq34} and \eqref{eq35}.
\end{defi}

\section{Main result}

Our main result is to demonstrate that under certain conditions there exists a unique solution to problem \eqref{p1} for any $t$ such that $0 \leq t \leq T< \infty$. 

\begin{thm}[large time existence]\label{t1}
	Let $E_{v,\omega}(t) < \infty$, $E_{h,\theta}(t) < \infty$. Suppose that $v(t_0), \omega(t_0) \in H^1(\Omega)$, $f, g \in L_2(\Omega^t)$. Finally, assume that $f_3\vert_{S_2} = 0$, $g'\vert_{S_2} = 0$. Then, for $\delta(t)$ sufficiently small there exists a unique solution $(v,\omega) \in W^{2,1}_2(\Omega^t)\times W^{2,1}_2(\Omega^t)$ to problem \eqref{p1} supplemented with the boundary conditions \eqref{p2} such that
	\begin{equation*}
			\norm{v}_{W^{2,1}_2(\Omega^t)} + \norm{\nabla p}_{L_2(\Omega^t)} \leq c_{\alpha,\nu,\nu_r,I,P,\Omega}\Big(E_{v,\omega}(t) + E_{h,\theta}(t) + \norm{f'}_{L_2(\Omega^t)} + \norm{v(t_0)}_{H^1(\Omega)} + 1\Big)^3
	\end{equation*}
	and
	\begin{multline*}
			\norm{\omega}_{W^{2,1}_2(\Omega^t)} \leq c_{\alpha,\nu,\nu_r,I,P,\Omega}\Big(E_{v,\omega}(t) + E_{h,\theta}(t) + \norm{f'}_{L_2(\Omega^t)} + \norm{g}_{L_2(\Omega^t)}\\
			+ \norm{v(t_0)}_{H^1(\Omega)} + \norm{\omega(t_0)}_{H^1(\Omega)} + 1\Big)^3.
	\end{multline*}
\end{thm}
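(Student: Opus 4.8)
The plan is to prove Theorem~\ref{t1} by a fixed point / continuation argument built on top of the weak solution from Lemma~\ref{lem20}, upgraded to strong regularity through energy estimates in which the quantity $\delta(t)$ plays the decisive role. The central idea is the two-dimensional flavour of the problem emphasized in the introduction: the full three-dimensional regularity should be controlled by the ``basic'' energy of $(v,\omega)$ together with the energy of the $x_3$-derivatives $(h,\theta) = (v_{,x_3},\omega_{,x_3})$, and the latter is exactly what $\delta(t)$ measures. Since only the \emph{rate of change} along the axis need be small, the scheme must be arranged so that the nonlinear terms that could grow in time are always absorbed by the small factor $\delta(t)$ rather than by the (possibly large) full norms.

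\medskip

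\emph{First}, I would establish the basic energy estimate for $(v,\omega)$ in $V_2^0(\Omega^t)$ (this is Lemma~\ref{l4}, already invoked for Lemma~\ref{lem20}), controlling $\norm{v}_{V_2^0(\Omega^t)} + \norm{\omega}_{V_2^0(\Omega^t)}$ by $E_{v,\omega}(t)$. Crucially the coupling terms $2\nu_r\Rot\omega\cdot v$ and $2\nu_r\Rot v\cdot\omega$ largely cancel after integration by parts against the boundary conditions~\eqref{p2}, and the $4\nu_r\abs{\omega}^2$ term gives a good sign; this is why the Navier condition $\Rot v\times n = 0$ and $\omega\vert_{S_1}=0$ were chosen. \emph{Second}, differentiate the whole system~\eqref{p1} in $x_3$ to obtain the problem for $(h,\theta)$ alluded to in Lemma~\ref{p5}, and derive the energy estimate for $(h,\theta)$. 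The forcing now involves $f_{,x_3},g_{,x_3}$ and the initial data $h(t_0),\theta(t_0)$ — precisely the ingredients of $E_{h,\theta}(t)$ and $\delta(t)$ — while the dangerous commutator terms of the form $\int h\cdot\nabla v\cdot h$ are estimated by interpolation (using $H^1\hookrightarrow L_6$ with constant $c_I$ and Poincar\'e with $c_P$) so that the top-order part carries a factor of $\delta(t)$. The boundary contributions vanish thanks to $f_3\vert_{S_2}=0$, $g'\vert_{S_2}=0$ together with~\eqref{p2}.

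\medskip

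\emph{Third}, with $\norm{h}_{V_2^0}+\norm{\theta}_{V_2^0}$ controlled by $E_{h,\theta}(t)$ and $\delta(t)$, I would close the estimate for the third component of the vorticity via Lemmas~\ref{lem9} and~\ref{lem7}, which is where the cylindrical geometry is used: the component $v_{2,x_1}-v_{1,x_2}$ satisfies a favourable equation whose gradient is bounded in terms of $h$ and $\theta$ already estimated. Combining the vorticity bound with the divergence-free condition and the elliptic (Stokes-type) regularity for the stationary problem associated to~\eqref{p1} then promotes $(v,\omega)$ from $V_2^0$ to $V_2^1$ and finally to $W^{2,1}_2(\Omega^t)$, with $\nabla p$ recovered from the momentum equation. \emph{Fourth}, uniqueness follows by testing the difference of two solutions against itself and applying Gr\"onwall, the nonlinear differences being controlled by the now-finite strong norms.

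\medskip

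The main obstacle is \emph{closing} the $(h,\theta)$ estimate: the cubic nonlinear terms arising after differentiation in $x_3$ must be bounded so that their leading part is multiplied by $\delta(t)$ (or by a factor absorbable into the dissipation), for otherwise one only obtains local-in-time control and a Gr\"onwall constant that blows up as $t\to T$. This is exactly the point where the smallness of $\delta(t)$ — rather than smallness of the data themselves — has to be exploited, and where the bookkeeping of the constants $c_{\alpha,\nu,\nu_r,I,P,\Omega}$ (kept time-independent by design, as stressed in Section~\ref{sec2.1}) becomes essential so that the final polynomial bound of degree three in the data holds uniformly up to the prescribed time $T$.
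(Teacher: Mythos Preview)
Your outline captures the broad architecture --- basic energy, energy for $(h,\theta)$, vorticity $\chi$, then maximal regularity and uniqueness --- and this is indeed the paper's route. However, the closing mechanism you describe in Step~2 does not work as stated, and the gap is precisely at the point you yourself flag as the main obstacle.

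The $V_2^0$ estimate for $(h,\theta)$ does \emph{not} close with a factor $\delta(t)$: the commutator term $\int_\Omega h\cdot\nabla v\cdot h$ is bounded by $\norm{h}_{L_3}\norm{\nabla v}_{L_2}\norm{h}_{L_6}$, and after absorbing $\norm{h}_{L_6}$ into the dissipation one is left with $\norm{h}_{L_3}^2\norm{\nabla v}_{L_2}^2$, where $\norm{h}_{L_\infty(t_0,t;L_3(\Omega))}$ is an a~priori \emph{unknown} quantity at this stage (this is exactly Lemma~\ref{lem8}). No interpolation at the $V_2^0$ level produces $\delta(t)$ here. The paper resolves this by an additional higher-order step you have omitted: one multiplies the $h$-equation by $-\triangle h$ and, combining with the $(h,\theta)$ energy, obtains (Lemma~\ref{lem6})
\[
\norm{h}_{V_2^1(\Omega^t)} \leq c\big(\exp(\norm{\nabla v}^2_{L_2(t_0,t;L_6(\Omega))} + E_{v,\omega}^2)+1\big)\,\delta(t),
\]
so that $\norm{h}_{L_\infty(L_3)} \leq c_{I,P}\norm{h}_{L_\infty(H^1)}$ now \emph{does} carry the factor $\delta(t)$. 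Feeding this back into the $\chi$-estimate and into the two-dimensional rot--div system $v_{2,x_1}-v_{1,x_2}=\chi$, $v_{1,x_1}+v_{2,x_2}=-h_3$ on each cross-section $\Omega'$ (Lemma~\ref{lem13}, which you replace by a vague ``Stokes-type regularity'') yields a self-referential inequality of the form
\[
\norm{\nabla v}^2_{V_2^0(\Omega^t)} \leq c\,\exp\!\big(\norm{\nabla v}^2_{V_2^0(\Omega^t)}\big)\,\delta(t) + \text{data},
\]
and it is \emph{this} inequality that closes for $\delta(t)$ small. Without the $V_2^1$ step for $h$, the exponential and the small parameter never meet, and your scheme stalls at an estimate with an uncontrolled right-hand side. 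Once $\norm{\nabla v}_{V_2^0}$ is bounded by data, the $W^{2,1}_2$ estimates (Lemmas~\ref{lem25},~\ref{lem18}) and uniqueness follow as you indicate; existence in the paper is via Leray--Schauder on a $\lambda$-parametrized linear problem rather than by upgrading the weak solution, but that difference is minor.
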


Let us briefly outline how we prove the above theorem: The beginning point is the energy estimate for solutions to problem \eqref{p1} on the time interval $[t_0,t_1]$. It ensures the existence of weak solutions (\cite[Ch. 3, Theorem 1.6.1]{luk1}, \cite[Theorem 2.1]{luk2} or in general case: \cite{roj1}). In the next step we introduce several auxiliary problems which provide us with better estimates for $v$ and $\omega$ but for the price of some smallness assumption on the rate of the change of the data (see \eqref{eq26}). Finally, the application of regu1larity results for the Stokes system (see \cite{ala}) and general parabolic systems (see \cite{sol2}) leads to considerable improvement in the regularity of weak solutions. Next we utilize the Leray-Schauder fixed point theorem (see Lemma \ref{lem24}) to prove the existence of strong solution. Finally, we demonstrate its uniqueness.

\section{State of the art}

Since the 70' dozens of results concerning the existence of weak or strong solutions, various conditional regularity criteria or qualitative properties of solutions, also for a generalized system, the so-called magneto-micropolar, are known. For a comprehensive summary we refer the reader to \cite[Ch. 3, \S 5]{luk1}. However, let us shortly outline these results which are closely related to our work. 

One of the earliest results was established by Galdi and Rionero in\cite{gald} where they stated the the boundary value problem with Dirichlet boundary conditions for micropolar flows belongs to the same class of evolution problems as Navier-Stokes equations. Therefore they were also able to formulate existence results for the micropolar equations which are similar to those obtained for the Navier-Stokes. The direct proofs of the existence and uniqueness of (global) strong solutions to \eqref{p1} under the zero Dirichlet boundary conditions came later and were obtained by \L ukaszewicz in \cite{luk11}. He needed sufficiently large $\nu$ and small data in comparison to $\nu$. 

In \cite{ort} Ortega-Torres and Rojas-Medar showed that under certain smallness assumption on the initial and external data there exists a unique, global and strong solution to problem \eqref{p1} (in fact they considered the magneto-microhydrodynamic equations, but we can safely put the magnetic field $b$ to be equal to zero which yields problem \eqref{p1}). In contrast to \cite{luk11} they no longer assumed a decay for the external data as times goes to the inifinity.

A couple of years later the same authors proved (see \cite{RojasMedar:2005cv}) again the existence and uniqueness of strong solutions to \eqref{p1} complemented with zero Dirichlet boundary conditions both for $v$ and $\omega$. Their new proof used an interactive approach and required certain smallness of $L_2$-norms of the external data and absence of the initial data. 

Subsequently, Yamaguchi (see \cite{yam}) proved the existence of global and strong solutions to \eqref{p1} in bounded domains under zero Dirichlet boundary conditions. His proof is based on the semi-group approach and requires some smallness of the data. 

By application of iterative scheme, Boldrini, Dur\'an and Rojas-Medar proved (see \cite{bol}) the existence of local strong solutions $(v,\omega) \in W^{2,1}_p(\Omega^t)\times W^{2,1}_p(\Omega^t)$, $p > 3$, to problem \eqref{p1} complemented with zero Dirichlet boundary conditions for both $v$ and $\omega$ in bounded and unbounded domains in $\mathbb{R}^3$ with compact $\mathcal{C}^2$-boundaries.

To the best of our knowledge there are no results which are close to $2d$ solutions to \eqref{p1} which would justify the choice of domains of cylindrical type. It is also clear that most authors considers only the homogeneous Dirichlet boundary condition both for $v$ and $\omega$. Therefore it makes us a case for this detailed study. 

What needs to be particularly emphasized is the fact that in our work we do not assume any smallness on the initial or external data. Their $L_2$-norms can be large. However, the data cannot change significantly alongside the cylinder. Their derivatives in the $x_3$-directions must remain small all the time. Therefore the flow is somehow close to the constant with respect to one variable.

Note also that cylindrical domains or boundary conditions were considered for the standard Navier-Stokes equations (for a comprehensive summary we refer the reader to the Introduction in \cite{wm6}). 

\section{Auxiliary results}

\begin{lem}[Embedding theorem]\label{lem10}
	Let $\Omega$ satisfy the cone condition and let $q \geq p$. Set
	\begin{equation*}
		\kappa = 2 - 2r - s - 5\left(\frac{1}{p} - \frac{1}{q}\right) \geq 0.
	\end{equation*}
Then for any function $u \in W^{2,1}_{p}(\Omega^t)$ the inequality
	\begin{equation*}
		\norm{\partial^r_t \Ud_x^s u}_{L_q(\Omega^t)} \leq c_1 \epsilon^{\kappa}\norm{u}_{W^{2,1}_p(\Omega^t)} + c_2 \epsilon^{-\kappa + 2s - 2}\norm{u}_{L_p(\Omega^t)}
	\end{equation*}
	holds, where the constants $c_1$ and $c_2$ depend only on $p$, $q$, $r$, $s$ and $\Omega$ but do not depend on $t$.
\end{lem}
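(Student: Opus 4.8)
The plan is to establish this as an interpolation-type inequality relating a scaled intermediate derivative norm to the full $W^{2,1}_p$ norm and the $L_p$ norm, using the parabolic scaling built into the exponent $\kappa$. First I would verify the dimensional balance: the parabolic cylinder $\Omega^t \subset \mathbb{R}^{3}\times\mathbb{R}$ has parabolic dimension $5$ (three spatial directions counted once, the time direction counted twice, since $\partial_t \sim \Ud_x^2$), which is exactly why the factor $5$ appears in $\kappa = 2 - 2r - s - 5(\tfrac1p - \tfrac1q)$. The quantity $\partial_t^r \Ud_x^s u$ has parabolic weight $2r + s$, and $W^{2,1}_p$ corresponds to parabolic weight $2$; the difference $2 - 2r - s$, corrected by the Sobolev gain $5(\tfrac1p-\tfrac1q)$ from raising integrability $p \to q$, gives $\kappa$. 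So the inequality is scale-invariant precisely when $\kappa = 0$.

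The cleanest route is to prove first the \emph{unscaled} (i.e.\ $\epsilon = 1$) inequality
\begin{equation*}
	\norm{\partial_t^r \Ud_x^s u}_{L_q(\Omega^t)} \leq c\,\norm{u}_{W^{2,1}_p(\Omega^t)}
\end{equation*}
by a standard anisotropic Sobolev embedding on the parabolic cylinder. Here the cone condition on $\Omega$ furnishes an extension operator, allowing one to reduce to the case of the whole space $\mathbb{R}^{4}$, where the embedding $W^{2,1}_p(\mathbb{R}^{3+1}) \hookrightarrow L_q$ with the stated derivative orders follows from the mixed-norm theory of anisotropic Besov/Sobolev spaces (the parabolic analogue of the Gagliardo--Nirenberg--Sobolev embedding). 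The constant here depends only on $p,q,r,s$ and $\Omega$, and crucially not on $t$ because the extension and embedding constants are time-independent once the spatial domain is fixed.

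The key device to introduce the parameter $\epsilon$ and split into two terms is a scaling argument. I would apply the unscaled inequality not to $u$ itself but to the rescaled function $u_\epsilon(x,\tau) := u(\epsilon x, \epsilon^2 \tau)$ (or, dually, rescale the domain), and then track how each norm transforms under the substitution $y = \epsilon x$, $\sigma = \epsilon^2 \tau$: the Jacobian contributes powers of $\epsilon$, each spatial derivative contributes $\epsilon^{1}$ and each time derivative $\epsilon^{2}$. Carrying the powers through, the left-hand side picks up $\epsilon^{-2r-s}$ times the measure factor $\epsilon^{5/q}$, while the two summands on the right pick up $\epsilon^{2 + 5/p}$ (from the top-order $W^{2,1}_p$ part) and $\epsilon^{5/p}$ (from the $L_p$ part) respectively. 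Collecting exponents yields exactly the powers $\epsilon^{\kappa}$ and $\epsilon^{-\kappa+2s-2}$ stated in the lemma. The sign condition $\kappa \geq 0$ is what makes the first power nonnegative, so that letting $\epsilon$ be small multiplies the top-order norm by a small factor — which is precisely the use this inequality will be put to later, absorbing high-order terms.

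The main obstacle I anticipate is not the algebra of exponents but securing the unscaled anisotropic embedding with a constant that is genuinely independent of $t$ and respects the mixed parabolic homogeneity. One must handle the inhomogeneous time interval $(t_0,t_1)$ carefully: a naive extension in time could introduce $t$-dependence, so I would extend only in the spatial variables (using the cone condition) while treating the time direction by even reflection or a Hestenes-type extension whose operator norm is universal. Verifying that the resulting embedding constant depends on $\Omega$ only through its cone parameters, and not on the length of the time interval, is the delicate point and the part of the argument requiring the most care.
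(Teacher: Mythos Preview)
Your outline is a reasonable and essentially correct sketch of the standard proof of this anisotropic interpolation inequality: establish the unscaled embedding on the whole space via an extension (furnished by the cone condition), then recover the $\epsilon$-dependence by parabolic rescaling $(x,\tau)\mapsto(\epsilon x,\epsilon^2\tau)$, which produces exactly the exponents $\kappa$ and $-\kappa+2s-2$ by dimensional bookkeeping.

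However, you should be aware that the paper does not prove this lemma at all: it simply cites \cite[Ch.~2, \S 3, Lemma 3.3]{lad} (Lady\v{z}enskaja--Solonnikov--Ural'ceva) and emphasizes that the constants are time-independent. So there is nothing to compare against beyond the citation; your sketch is in fact a summary of the argument one would find in that reference. The point you flag as delicate---that the extension and embedding constants must not depend on the length of the time interval---is precisely the one feature the paper singles out as important, and it is handled in the cited source by working with the anisotropic seminorms directly rather than through a time-dependent extension.
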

For the proof of the lemma we refer the reader to \cite[Ch.2, \S 3, Lemma 3.3]{lad}. As before, we lay emphasis on the fact that the constant $c_1$ and $c_2$ do not depend on time.

\begin{lem}\label{lem11}
	Suppose that $u \in V_2^0(\Omega^t)$. Then $u \in L_q(0,t;L_p(\Omega))$ and
	\begin{equation*}
		\norm{u}_{L_q(t_0,t;L_p(\Omega))} \leq c_{p,I} \norm{u}_{V^0_2(\Omega^t)}
	\end{equation*}
	holds under the condition $\frac{3}{p} + \frac{2}{q} = \frac{3}{2}$, $2 \leq p \leq 6$.
\end{lem}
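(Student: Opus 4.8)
The plan is to prove this by interpolation between the two quantities controlled by the $V^0_2$-norm: the $L_\infty$-in-time, $L_2$-in-space bound and the $L_2$-in-time, $H^1$-in-space bound (which by Sobolev embedding gives $L_6$-in-space). Recall that $\norm{u}_{V^0_2(\Omega^t)} = \esssup_{s}\norm{u(s)}_{L_2(\Omega)} + \left(\int \norm{\nabla u}^2_{L_2(\Omega)}\,\ud s\right)^{1/2}$, so in particular both $\esssup_s \norm{u(s)}_{L_2(\Omega)}$ and $\norm{u}_{L_2(t_0,t;H^1(\Omega))}$ are bounded by $\norm{u}_{V^0_2(\Omega^t)}$. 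The three-dimensional embedding $H^1(\Omega)\hookrightarrow L_6(\Omega)$ (valid since $\Omega$ is a regular bounded domain) then yields $\norm{u}_{L_2(t_0,t;L_6(\Omega))} \le c_I \norm{u}_{V^0_2(\Omega^t)}$ as well.

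First I would fix the spatial exponent $p\in[2,6]$ and, for a.e.\ fixed $s$, interpolate the spatial $L_p$-norm between $L_2$ and $L_6$. Writing $\frac{1}{p} = \frac{\lambda}{2} + \frac{1-\lambda}{6}$ determines $\lambda = \lambda(p)\in[0,1]$, and the standard interpolation inequality gives $\norm{u(s)}_{L_p(\Omega)} \le \norm{u(s)}_{L_2(\Omega)}^{\lambda}\norm{u(s)}_{L_6(\Omega)}^{1-\lambda}$. The next step is to raise this to the power $q$ and integrate in time: using the pointwise-in-time $L_2$ bound $\norm{u(s)}_{L_2(\Omega)}\le \esssup \norm{u}_{L_2}$ to pull the $L_2$-factor out of the time integral, one is left with
\begin{equation*}
	\int_{t_0}^{t}\norm{u(s)}_{L_p(\Omega)}^q\,\ud s \le \left(\esssup_s\norm{u(s)}_{L_2(\Omega)}\right)^{\lambda q}\int_{t_0}^{t}\norm{u(s)}_{L_6(\Omega)}^{(1-\lambda)q}\,\ud s.
\end{equation*}
For the remaining time integral I want to apply the already-available bound $u\in L_2(t_0,t;L_6(\Omega))$, which forces the exponent $(1-\lambda)q = 2$. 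Combining this with the interpolation relation for $\lambda$, a short computation shows the scaling condition is exactly $\frac{3}{p} + \frac{2}{q} = \frac{3}{2}$: indeed $1-\lambda = \frac{2}{q}$ and substituting $\lambda = 1 - \frac{2}{q}$ into $\frac{1}{p} = \frac{\lambda}{2} + \frac{1-\lambda}{6}$ gives precisely the stated relation.

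With the exponent matching verified, taking the $q$-th root of the displayed estimate produces $\norm{u}_{L_q(t_0,t;L_p(\Omega))} \le \left(\esssup_s \norm{u(s)}_{L_2(\Omega)}\right)^{\lambda} \norm{u}_{L_2(t_0,t;L_6(\Omega))}^{1-\lambda}$, and bounding each factor by $\norm{u}_{V^0_2(\Omega^t)}$ (using the Sobolev embedding constant $c_I$ for the second) gives the claim with a constant $c_{p,I}$ independent of $t$. The main subtlety to watch is precisely the bookkeeping that ties the admissible exponents together: one must check that the scaling identity $\frac{3}{p}+\frac{2}{q}=\frac{3}{2}$ is equivalent to the pair of constraints $(1-\lambda)q=2$ and the interpolation formula for $\lambda$, and that the endpoint cases $p=2$ (giving $q=\infty$) and $p=6$ (giving $q=2$) are consistent with the $V^0_2$ definition. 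The $t$-independence of the constant is inherited from the $t$-independence of the Sobolev embedding constant, which is the reason this interpolation bound is useful for the global-in-time estimates later in the paper.
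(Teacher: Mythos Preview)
Your proof is correct and follows essentially the same approach as the paper: interpolate the spatial $L_p$-norm between $L_2$ and $L_6$, pull the $L_2$-factor out of the time integral as a supremum, and match exponents via $(1-\lambda)q=2$ to recover the scaling relation $\frac{3}{p}+\frac{2}{q}=\frac{3}{2}$. The only cosmetic difference is that the paper applies Young's inequality at the end to turn the product $A^{\lambda}B^{1-\lambda}$ into a sum before bounding by the $V^0_2$-norm, whereas you bound the product directly; both are equivalent here.
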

Let us emphasize that the constant that appears on the right-hand side does not depend on time.
\begin{proof}
	We want to show that
	\begin{equation*}
		\norm{u}_{L_q(t_0,t;L_p(\Omega))} \leq c_{p,q,I} \left(\norm{u}_{L_2(t_0,t;H^1(\Omega))} + \norm{u}_{L_\infty(t_0,t;L_2(\Omega))}\right).
	\end{equation*}
	For $L_p$-spaces we have the interpolation inequality
	\begin{equation*}
		\norm{u}_{L_p(\Omega)} \leq \norm{u}_{L_r(\Omega)}^{\theta}\norm{u}_{L_s(\Omega)}^{1 - \theta},
	\end{equation*}
	where $s < p < r$ and $\frac{1}{p} = \frac{\theta}{r} + \frac{1 - \theta}{s}$. Integrating with respect to time yields
	\begin{multline*}
		\norm{u}_{L_q(t_0,t_1;L_p(\Omega))} = \left(\int_{t_0}^{t} \norm{u(\tau)}_{L_p(\Omega)}^q\, \ud \tau\right)^{\frac{1}{q}} \leq \left(\int_{t_0}^{t} \norm{u(\tau)}_{L_r(\Omega)}^{q\theta}\norm{u(\tau)}_{L_s(\Omega)}^{q(1 - \theta)}\, \ud \tau\right)^{\frac{1}{q}} \\
		\leq \sup_{\tau \in (t_0,t_1)} \norm{u(\tau)}_{L_r(\Omega)}^{\theta}\left(\int_{t_0}^{t} \norm{u(\tau)}_{L_s(\Omega)}^{q(1 - \theta)}\, \ud \tau\right)^{\frac{1}{q}}.
	\end{multline*}
	Using the Young inequality gives
	\begin{equation*}
		\norm{u}_{L_q(t_0,t;L_p(\Omega))} \leq \theta \sup_{\tau \in (t_0,t)} \norm{u(\tau)}_{L_r(\Omega)} + (1 - \theta)\left(\int_{t_0}^{t} \norm{u(\tau)}_{L_s(\Omega)}^{q(1 - \theta)}\, \ud \tau\right)^{\frac{1}{q(1 - \theta)}}.
	\end{equation*}
	Next we set $r = 2$, $s = 6$ and $q(1 - \theta) = 2$. Then $\frac{1}{p} = \frac{\theta}{2} + \frac{1 - \theta}{6} = \frac{1 + 2\theta}{6}$ and finally
	\begin{equation*}
		\frac{3}{p} + \frac{2}{q} = \frac{1 + 2\theta}{2} + (1 - \theta) = \frac{1 + 2\theta + 2 - 2\theta}{2} = \frac{3}{2}.
	\end{equation*}

	For $p = 2$, $q = \infty$ or $p = 6$, $q = 2$ the estimate follows immediately from definition of the space $V_2^0(\Omega^t)$. This ends the proof.
\end{proof}

\begin{lem}[Leray-Schauder fixed point principle]\label{lem24}
	Let $X$ denote a Banach space. Suppose that $\Phi\colon X\times[0,1]\to X$ satisfies the following conditions:
		\begin{itemize}
			\item for any fixed $\lambda \in [0,1]$ the mapping $\Phi(\cdot,\lambda)\colon X \to X$ is continuous,
			\item for any fixed $x \in X$ the mapping $\Phi(x,\cdot)\colon[0,1] \to X$ is uniformly continuous,
			\item there exists a bounded subset $A \subset X$ such that every fixed point of the mapping $\Phi(\cdot,\lambda)\colon X \to X$ for any choice of $\lambda \in [0,1]$ belongs to $A$,
			\item the mapping $\Phi(\cdot,0)$ has only one fixed point. 
		\end{itemize}
		Then, $\Phi(\cdot,1)$ has at least one fixed point.
\end{lem}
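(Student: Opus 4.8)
The plan is to prove this by the Leray--Schauder topological degree together with the continuation (homotopy) method, which is the standard route for statements of this type. Throughout I would treat $\Phi$ as a compact (completely continuous) map, which must be understood as part of the hypotheses: in an infinite-dimensional $X$, mere continuity together with the \emph{a priori} bound furnished by the third assumption does not suffice (there exist continuous self-maps of the closed unit ball of a Hilbert space without fixed points), and in the intended application compactness comes from the smoothing of the auxiliary Stokes/parabolic problems, whose solution operator lands in a space compactly embedded into $X$.

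First I would reduce the fixed-point problem to a zero-finding problem for the compact vector field $\Psi(\cdot,\lambda) := \Id - \Phi(\cdot,\lambda)$. By the third assumption, every zero of $\Psi(\cdot,\lambda)$, for every $\lambda\in[0,1]$, lies in the bounded set $A$; hence I may fix an open ball $B = B(0,R)\supset\bar{A}$ so large that $\Psi(x,\lambda)\neq 0$ for all $x\in\partial B$ and all $\lambda$. This non-vanishing on the boundary, together with compactness, makes the Leray--Schauder degree $\deg(\Psi(\cdot,\lambda),B,0)$ well defined for each $\lambda$.

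Next I would invoke homotopy invariance of the degree. The first assumption gives continuity in $x$ for fixed $\lambda$ and the second gives uniform continuity in $\lambda$ for fixed $x$, so that $(x,\lambda)\mapsto\Phi(x,\lambda)$ furnishes an admissible compact homotopy whose field $\Psi$ never vanishes on $\partial B$; therefore $\deg(\Psi(\cdot,1),B,0)=\deg(\Psi(\cdot,0),B,0)$. It then remains to show that the degree at $\lambda=0$ is nonzero. By the fourth assumption $\Phi(\cdot,0)$ has a unique fixed point $x_0\in B$, and in the situation at hand $\Phi(\cdot,0)$ is an affine (indeed essentially constant) map, so $\Psi(\cdot,0)$ is a homeomorphism near $x_0$ of index $+1$ and the excision property yields $\deg(\Psi(\cdot,0),B,0)=1$. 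Consequently $\deg(\Psi(\cdot,1),B,0)=1\neq 0$, and the solution property of the degree produces $x_1\in B$ with $\Psi(x_1,1)=0$, i.e.\ a fixed point of $\Phi(\cdot,1)$.

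The delicate point is the passage from the fourth assumption to a nonzero degree at $\lambda=0$: uniqueness of a fixed point alone does \emph{not} force the degree to be nonzero, since an isolated zero can have index $0$ (e.g.\ $x\mapsto x^2$ in one dimension), so this step genuinely exploits the particular structure of $\Phi(\cdot,0)$ that makes the local index equal to $1$. This, together with verifying the compactness of $\Phi$ on which the very existence of the degree rests, is where the real work lies; homotopy invariance and the boundary non-vanishing are then routine. For the purposes of the paper it is cleanest to cite a standard reference for this degree-theoretic continuation principle (e.g.\ Deimling's monograph, or Gilbarg--Trudinger, Ch.~11) rather than redevelop the degree from scratch.
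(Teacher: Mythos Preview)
The paper does not give its own proof of this lemma at all: it simply refers the reader to \cite{daf} (Dafermos--Hsiao). So your sketch goes well beyond what the paper does, and your closing remark---that one should just cite a standard reference---is precisely the route the paper takes.

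Your outline of the degree-theoretic continuation argument is the standard one and is essentially correct. Two of your caveats are well taken and worth retaining if you ever write this out in full. First, compactness of $\Phi$ is indeed an unstated hypothesis here; the paper relies on it explicitly in Lemma~\ref{lem23}, where compactness is obtained from the smoothing of the Stokes and parabolic solution operators and the compact embeddings $W^{2,1}_2(\Omega^t)\hookrightarrow \mathfrak{M}(\Omega^t)$. Second, your observation that mere uniqueness of the fixed point at $\lambda=0$ does not by itself force nonzero degree is correct; in the application $\Phi(\cdot,0)$ is the solution operator of a \emph{linear} problem (see \eqref{p10} with $\lambda=0$), so $\Psi(\cdot,0)=\Id-\Phi(\cdot,0)$ is a compact perturbation of the identity with a unique zero, and the degree is $\pm 1$ by the linear theory. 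With those two points noted, your argument is sound; for the paper's purposes the citation suffices.
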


\begin{proof}
	For the proof we refer the reader to \cite{daf}.
\end{proof}

In further considerations we will often integrate by parts. To avoid repetition of some calculations, we shall demonstrate the most general case in the below Lemma:

\begin{lem}[On integration by parts]\label{l1}
	Let $u$ and $w$ belong to $H^1(\Omega)$. Then
	\begin{align*}
		\int_{\Omega} \Rot u \cdot w\, \ud x &= \int_{\Omega} \Rot w \cdot u\, \ud x + \int_S u \times n \cdot w\, \ud S \\
		&= \int_{\Omega} \Rot w \cdot u\, \ud x - \int_S w \times n \cdot u\, \ud S
	\end{align*}
\end{lem}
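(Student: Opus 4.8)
The plan is to reduce the statement to the pointwise vector identity $\Div(u\times w)=\Rot u\cdot w-\Rot w\cdot u$ followed by the Gauss divergence theorem, and then to recast the resulting boundary integrand by means of the scalar triple product. First I would verify the algebraic identity $\Div(u\times w)=\Rot u\cdot w-\Rot w\cdot u$ by expanding both sides in components; with the convention for $\Rot$ fixed in Section~\ref{sec2.1} this is a direct calculation. Since $u,w\in H^1(\Omega)$ embed into $L_6(\Omega)$ in three dimensions while $\Rot u,\Rot w\in L_2(\Omega)$, each of the three products $\Rot u\cdot w$, $\Rot w\cdot u$ and $\Div(u\times w)$ lies in $L_{3/2}(\Omega)\subset L_1(\Omega)$, so all the volume integrals are well defined.

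Next, I would apply the divergence theorem to the vector field $u\times w$, which belongs to $W^1_1(\Omega)$ by the above integrability, so as to obtain $\int_\Omega\big(\Rot u\cdot w-\Rot w\cdot u\big)\,\ud x=\int_S (u\times w)\cdot n\,\ud S$. The traces of $u$ and $w$ lie in $H^{1/2}(S)\subset L_2(S)$, whence the surface integral is finite. It then remains to rewrite the boundary integrand: using the cyclic invariance of the scalar triple product together with the interchangeability of the dot and the cross, $(u\times w)\cdot n$ is recast as $u\times n\cdot w$ and, equivalently, as $-\,w\times n\cdot u$, which produces the two displayed forms on the right-hand side.

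To justify the passage for merely $H^1$ data I would argue by density: choose $u_k,w_k\in \mathcal{C}^\infty(\overline{\Omega})$ with $u_k\to u$ and $w_k\to w$ in $H^1(\Omega)$, write the identity for the smooth pairs, and pass to the limit. The volume terms converge because the bilinear multiplication $H^1\times H^1\to L_1$ is continuous in the relevant norms (again via the $L_6$--$L_2$ pairing), while the boundary terms converge by continuity of the trace operator $H^1(\Omega)\to L_2(S)$. I expect no serious obstacle here; the only point demanding genuine care is the bookkeeping of signs and of the normal orientation in the scalar triple product, since an interchange of two of its arguments reverses the sign, and it is exactly this that distinguishes the two equivalent expressions for the boundary term.
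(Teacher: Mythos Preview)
Your approach is correct and is precisely the ``easy computation'' the paper alludes to: the paper's own proof consists of the single sentence ``It is an easy computation.'' The identity $\Div(u\times w)=\Rot u\cdot w-\Rot w\cdot u$ combined with the divergence theorem and the cyclic property of the scalar triple product is exactly what is intended, and your care with the density argument and trace continuity goes beyond what the paper itself supplies.
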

\begin{proof}
	It is an easy computation.
\end{proof}

\begin{lem}\label{lem32}
	Let $\kappa$ and $\mathbb{D}(v)$ denote the curvature of the boundary $S$ and dilatation tensor, i.e. $\mathbb{D}(v) = \frac{1}{2}\left(\nabla v + \nabla^{\perp} v\right)$, respectively. Then
	\begin{equation*}
		n\cdot\mathbb{D}(v)\cdot\tau_{\alpha} + \kappa (2 - \alpha)v\cdot\tau_1 = -\frac{1}{2}\Rot v\times n \cdot \tau_{\alpha}
	\end{equation*}
	holds on $S$, where $\tau_{\alpha}$, $\alpha \in \{1,2\}$ is a tangent vector.
\end{lem}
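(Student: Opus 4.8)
The plan is to strip the statement down to a single scalar quantity and then recognise that quantity, via the standing constraint $v\cdot n = 0$ from \eqref{p2}, as a purely geometric object measuring how the unit normal turns along the boundary. First I would record an algebraic identity that holds pointwise for every $v\in H^1(\Omega)$, with no reference to $S$: splitting $\nabla v = \mathbb{D}(v) + \mathbb{W}(v)$ into its symmetric and antisymmetric parts, where $\mathbb{W}(v) = \tfrac12(\nabla v - \nabla^{\perp}v)$, a direct contraction with the Levi-Civita symbol gives $\Rot v\times n = 2\,\mathbb{W}(v)\cdot n$. Pairing with a tangent vector $\tau_\alpha$ and writing $\partial_n = n\cdot\nabla$, $\partial_{\tau_\alpha} = \tau_\alpha\cdot\nabla$, this unpacks into
\[
\Rot v\times n\cdot\tau_\alpha = \tau_\alpha\cdot\partial_n v - n\cdot\partial_{\tau_\alpha}v,
\qquad
2\,n\cdot\mathbb{D}(v)\cdot\tau_\alpha = \tau_\alpha\cdot\partial_n v + n\cdot\partial_{\tau_\alpha}v .
\]

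Subtracting these two relations cancels the normal-derivative term $\tau_\alpha\cdot\partial_n v$ and leaves
\[
2\,n\cdot\mathbb{D}(v)\cdot\tau_\alpha = \Rot v\times n\cdot\tau_\alpha + 2\,n\cdot\partial_{\tau_\alpha}v ,
\]
so the entire lemma is reduced to identifying the single scalar $n\cdot\partial_{\tau_\alpha}v$ on $S$. This step is automatic and contributes no curvature; all the geometry is hidden in that one term.

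Next I would bring in the boundary condition $v\cdot n = 0$, which holds on all of $S$. Differentiating it tangentially gives $(\partial_{\tau_\alpha}v)\cdot n = -\,v\cdot(\partial_{\tau_\alpha}n)$; since $\abs{n}=1$, the vector $\partial_{\tau_\alpha}n$ is tangential, whence $v\cdot\partial_{\tau_\alpha}n = \sum_{\beta}(\partial_{\tau_\alpha}n\cdot\tau_\beta)(v\cdot\tau_\beta)$. The problem is now entirely geometric: it remains to evaluate the coefficients $\partial_{\tau_\alpha}n\cdot\tau_\beta$ from the explicit frame \eqref{p4}.

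The remaining explicit computation on $S_1$ and $S_2$ is where I expect the only delicacy. On $S_2$ the normal $n=(0,0,\pm1)$ is constant, so $\partial_{\tau_\alpha}n = 0$ and no curvature term survives (consistently, $\kappa\equiv 0$ on the flat faces). On $S_1$ the normal $n = \abs{\nabla\varphi}^{-1}(\varphi_{,x_1},\varphi_{,x_2},0)$ is independent of $x_3$, so $\partial_{\tau_2}n = \partial_{x_3}n = 0$: this is precisely why the factor $(2-\alpha)$ annihilates the contribution for $\alpha=2$. For $\alpha=1$ the tangent $\tau_1 = \abs{\nabla\varphi}^{-1}(-\varphi_{,x_2},\varphi_{,x_1},0)$ is tangent to the cross-sectional curve $\{\varphi = c_0\}$; the third component of $\partial_{\tau_1}n$ stays zero, so $\partial_{\tau_1}n\cdot\tau_2 = 0$ and only $\partial_{\tau_1}n\cdot\tau_1$ remains, this being by definition the curvature $\kappa$ of $\{\varphi = c_0\}$ (expressible through the signed-curvature formula in the derivatives of $\varphi$). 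Collecting the one surviving term yields $n\cdot\partial_{\tau_\alpha}v = -\kappa(2-\alpha)(v\cdot\tau_1)$ uniformly on $S$, and substituting into the reduced identity gives the assertion. The main obstacle is thus not conceptual but bookkeeping: fixing the orientation of $n$ and the sign convention for $\kappa$ so that the curvature term appears with exactly the constant displayed in the statement, the tensorial reduction itself being forced.
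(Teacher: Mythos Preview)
Your proposal is correct and follows essentially the same route as the paper's own proof: both split $n\cdot\mathbb{D}(v)\cdot\tau_\alpha$ into an antisymmetric piece identified with $\tfrac12\Rot v\times n\cdot\tau_\alpha$ and a remainder $n\cdot\partial_{\tau_\alpha}v$, then use $v\cdot n\vert_S=0$ to convert that remainder into $-v\cdot\partial_{\tau_\alpha}n$ and evaluate it from the explicit frame \eqref{p4}. The only cosmetic difference is that the paper carries out the $S_1$ computation fully in terms of the derivatives of $\varphi$ and then \emph{recognises} the resulting expression as the level-curve curvature formula, whereas you short-circuit that step by invoking $\partial_{\tau_1}n\cdot\tau_1=\pm\kappa$ directly; your caveat about fixing the sign convention is well placed, since this is exactly where the overall sign in the stated identity is decided.
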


\begin{proof}
	We have
	\begin{equation*}
		n\cdot \mathbb{D}(v)\cdot \tau_\alpha = \frac{1}{2}\sum_{i,j} n_i \left(\frac{\partial v_j}{\partial x_i} + \frac{\partial v_i}{\partial x_j}\right)\tau_{\alpha j} = \frac{1}{2}\sum_{i,j} n_i \left(\frac{\partial v_j}{\partial x_i} - \frac{\partial v_i}{\partial x_j}\right)\tau_{\alpha j} + \sum_{i,j} n_i \frac{\partial v_i}{\partial x_j}\tau_{\alpha j}=: I_1 + I_2.
	\end{equation*}
	Clearly
	\begin{equation*}
		I_1 = -\frac{1}{2}\Rot v\times n \cdot \tau_\alpha.
	\end{equation*}
	For $I_2$ we have
	\begin{equation*}
		I_2 = \sum_{i,j}\left(\frac{\partial\big(n_iv_i\big)}{\partial x_j} \tau_{\alpha j} - v_i\frac{\partial n_i}{\partial x_j}\tau_{\alpha j}\right) = \sum_{i,j}v_i\frac{\partial n_i}{\partial x_j}\tau_{\alpha j}.
	\end{equation*}
	Since $n\vert_{S_2} = [0,0,1]$, we obtain $I_2\vert_{S_2} = 0$. On $S_1$ we use \eqref{p4}. We see that $n$ does not depend on $x_3$, which implies that $i,j\in\{1,2\}$ and $\alpha = 1$. By simple computation we get
	\begin{equation*}
		\sum_{i,j} \frac{\partial n_i}{\partial x_j}\tau_{1j} = \frac{1}{\abs{\nabla \varphi}^2}\big(-\varphi_{,x_1x_1}\varphi_{,x_2} + \varphi_{,x_1x_2}\varphi_{,x_1}, -\varphi_{,x_1x_2}\varphi_{,x_2} + \varphi_{,x_2x_2}\varphi_{,x_1}, 0\big). 
	\end{equation*}
	We can express $v$ in the basis $n,\tau_1,\tau_2$ as follows: 
	\begin{equation*}
		v = (v\cdot n)n + (v\cdot \tau_1)\tau_1 + (v\cdot \tau_2)\tau_2.
	\end{equation*}
	Since $v\cdot n\vert_S = 0$, we get 
	\begin{equation*}
		I_2 =  \frac{(v\cdot \tau_1)}{\abs{\nabla \varphi}^3} \big(\varphi_{,x_1x_1}\varphi^2_{,x_2} - 2\varphi_{,x_1x_2}\varphi_{,x_1}\varphi_{,x_2}  + \varphi_{,x_2x_2}\varphi^2_{,x_1}\big).
	\end{equation*}
	For a curve given in an implicit form, i.e. by $\varphi(x_1,x_2) = c$, the curvature is defined as
	\begin{equation*}
		\kappa = \frac{\begin{vmatrix}\varphi_{,x_1x_1} & \varphi_{,x_1x_2} & \varphi_{,x_1} \\ \varphi_{,x_2x_1} & \varphi_{,x_2x_2} & \varphi_{,x_2} \\ \varphi_{,x_1} & \varphi_{,x_2} & 0\end{vmatrix}}{\abs{\nabla \varphi}^3}.
	\end{equation*}
	Thus,
	\begin{equation*}
		I_2 = -\kappa(v\cdot \tau_1),
	\end{equation*}
	which ends the proof.	
\end{proof}

\begin{lem}[Boundary conditions on $S_2$]\label{lem3}
	Let \eqref{p2} be satisfied. Then
	\begin{equation*}
		\begin{aligned}
			&(a) & &v_3\vert_{S_2} = 0, \quad v_{3,x_1}\vert_{S_2} = 0, \quad v_{3,x_2}\vert_{S_2} = 0, \\
			&(b) & &(\Rot v)'\vert_{S_2} = 0, \quad \chi_{,x_3}\vert_{S_2} = 0, \\
			&(c) & &h'\vert_{S_2} = 0, \quad h_{3,x_3}\vert_{S_2} = 0,
    \end{aligned}
	\end{equation*}
	where $(\Rot v)' = \big((\Rot v)_1,(\Rot v)_2,0\big)$, $h' = (h_1,h_2,0)$.
\end{lem}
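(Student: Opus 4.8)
The plan is to exploit the special flat geometry of $S_2$. By \eqref{p4} the outward normal on $S_2$ is $n = (0,0,\pm 1)$ with tangent vectors $\tau_1 = (1,0,0)$ and $\tau_2 = (0,1,0)$, so $S_2$ is a piece of the plane $\{x_3 = \pm a\}$ and $x_1$, $x_2$ are genuine tangential coordinates on it. The key observation I will use repeatedly is that if a function vanishes identically on $S_2$, then so do its tangential derivatives $\partial_{x_1}$ and $\partial_{x_2}$ restricted to $S_2$. First I would prove $(a)$: the condition $v\cdot n = 0$ from \eqref{p2} reads $\pm v_3 = 0$ on $S_2$, hence $v_3\vert_{S_2} = 0$, and since $v_3$ vanishes on the whole flat surface, differentiating tangentially gives $v_{3,x_1}\vert_{S_2} = v_{3,x_2}\vert_{S_2} = 0$.

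Next I would treat $(b)$ together with the $h'$-part of $(c)$. Writing $w = \Rot v = (w_1,w_2,w_3)$, a direct computation gives $w\times n = \pm(w_2,-w_1,0)$ on $S_2$, so the Navier condition $\Rot v\times n = 0$ forces $w_1\vert_{S_2} = w_2\vert_{S_2} = 0$, that is $(\Rot v)'\vert_{S_2} = 0$. Expanding $w_1 = v_{3,x_2} - v_{2,x_3}$ and $w_2 = v_{1,x_3} - v_{3,x_1}$ and substituting the identities $v_{3,x_1}\vert_{S_2} = v_{3,x_2}\vert_{S_2} = 0$ already obtained in $(a)$, I deduce $v_{2,x_3}\vert_{S_2} = v_{1,x_3}\vert_{S_2} = 0$, i.e. $h'\vert_{S_2} = 0$, which is the first assertion of $(c)$. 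Finally, since $h_1 = v_{1,x_3}$ and $h_2 = v_{2,x_3}$ now vanish on all of $S_2$, their tangential derivatives vanish as well, so with $\chi = (\Rot v)_3 = v_{2,x_1} - v_{1,x_2}$ one finds $\chi_{,x_3} = h_{2,x_1} - h_{1,x_2} = 0$ on $S_2$, completing $(b)$.

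It remains to establish $h_{3,x_3}\vert_{S_2} = 0$. Here I would differentiate the incompressibility constraint $\Div v = v_{1,x_1} + v_{2,x_2} + v_{3,x_3} = 0$ with respect to $x_3$, obtaining $h_{1,x_1} + h_{2,x_2} + h_{3,x_3} = 0$. Because $h_1$ and $h_2$ vanish on the flat surface $S_2$, their tangential derivatives $h_{1,x_1}$ and $h_{2,x_2}$ also vanish there, whence $h_{3,x_3}\vert_{S_2} = 0$, completing $(c)$.

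The only genuine subtlety — and the step that makes the whole lemma work — is the flatness of $S_2$, which legitimizes passing from the vanishing of a quantity on $S_2$ to the vanishing of its $x_1$- and $x_2$-derivatives there. The analogous manipulations would fail on $S_1$, where the tangential directions mix $x_1$ and $x_2$ through the geometry of $\varphi$; this is precisely why the statement is confined to $S_2$ and why the two boundary pieces are kept separate.
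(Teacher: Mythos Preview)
Your proof is correct and follows essentially the same route as the paper's: both use $v\cdot n\vert_{S_2}=0$ and tangential differentiation on the flat face to get $(a)$, the Navier condition $\Rot v\times n\vert_{S_2}=0$ combined with $(a)$ to get $(\Rot v)'\vert_{S_2}=0$ and $h'\vert_{S_2}=0$, then tangential differentiation of $h'$ for $\chi_{,x_3}\vert_{S_2}=0$, and finally the incompressibility constraint $\Div h=0$ for $h_{3,x_3}\vert_{S_2}=0$. Your added remark on why the flatness of $S_2$ is essential is a welcome clarification but does not change the argument.
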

\begin{proof}
	As immediate consequence of \eqref{p2}$_{1,2}$ we get
	\begin{equation*}
		v\cdot n\vert_{S_2} = v_3\vert_{S_2} = 0 \qquad \Rightarrow \qquad v_{3,x_1}\vert_{S_2} = 0 \quad \textrm{and} \quad v_{3,x_2}\vert_{S_2} = 0
	\end{equation*}
	and
	\begin{equation*}
		\Rot v \times n\vert_{S_2} = 0 \Leftrightarrow \left. \begin{array}{ll} v_{3,x_2} - v_{2,x_3} = (\Rot v)_1\vert_{S_2} = 0 \\ v_{1,x_3} - v_{3,x_1} = (\Rot v)_2\vert_{S_2} = 0 \end{array} \right\} \Rightarrow v_{1,x_3}\vert_{S_2} = v_{2,x_3}\vert_{S_2} = 0.
	\end{equation*}
	This yields
	\begin{equation*}
		\chi_{,x_3}\vert_{S_2} = v_{2,x_1x_3} - v_{1,x_2x_3}\vert_{S_2} = 0.
	\end{equation*}
	and from $\Div h = 0$ we have
	\begin{equation*}
		v_{3,x_3x_3}\vert_{S_2} = -v_{1,x_3x_1} - v_{2,x_3x_2}\vert_{S_2} = 0 \qquad \Rightarrow \qquad h_{3,x_3}\vert_{S_2} = 0.
	\end{equation*}
	This ends the proof.
\end{proof}

\begin{lem}\label{l2}
	Suppose that
		\begin{equation*}
			\begin{aligned}
				&\Rot u = \alpha & &\text{in $\Omega$}, \\
				&\Div u = \beta & &\text{in $\Omega$}
		\end{aligned}
	\end{equation*}
	with either $u \cdot n\vert_S = 0$ or $u \times n\vert_S = 0$.
	Then
	\begin{equation*}
		\norm{u}_{H^{k + 1}(\Omega)} \leq c_{\Omega}\big(\norm{\alpha}_{H^k(\Omega)} + \norm{\beta}_{H^k(\Omega)}).
	\end{equation*}
\end{lem}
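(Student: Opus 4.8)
The plan is to treat the pair $(\Rot,\Div)$ as a first-order elliptic (Agmon--Douglis--Nirenberg) system for the single unknown $u$, to establish the estimate first with a lower-order term $\norm{u}_{L_2(\Omega)}$ on the right, and then to remove that term by a compactness argument that exploits the topological triviality of $\Omega$. I would proceed by induction on $k$: the base case $k=0$ by an energy identity, the inductive step by differentiation together with regularity for the vector Laplacian. For $k=0$ I start from the Gaffney-type identity produced by integrating $\abs{\nabla u}^2$ by parts, which rests on Lemma \ref{l1}. Rewriting $\abs{\nabla u}^2$ in terms of $\abs{\Rot u}^2+\abs{\Div u}^2$ leaves, after two applications of the divergence theorem, a boundary integral $\int_S(\cdots)\,\ud S$ that is quadratic in the trace of $u$. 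Both admissible conditions $u\cdot n\vert_S=0$ and $u\times n\vert_S=0$ kill the antisymmetric contribution, so only a curvature-weighted term bounded by $c_{\Omega}\norm{u}_{L_2(S)}^2$ survives; the trace theorem and interpolation give $\norm{u}_{L_2(S)}^2\le\epsilon\norm{\nabla u}_{L_2(\Omega)}^2+c_\epsilon\norm{u}_{L_2(\Omega)}^2$, and absorbing the $\epsilon$-term yields
\[
	\norm{u}_{H^1(\Omega)}^2\le c_{\Omega}\big(\norm{\alpha}_{L_2(\Omega)}^2+\norm{\beta}_{L_2(\Omega)}^2+\norm{u}_{L_2(\Omega)}^2\big).
\]

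For $k\ge 1$ I would invoke the vector identity $-\triangle u=\Rot\alpha-\nabla\beta$, which follows from $\Rot\Rot u=\nabla\Div u-\triangle u$. Each Cartesian component of $u$ then solves a Poisson equation whose right-hand side lies in $H^{k-1}(\Omega)$, while the traces of $\Rot u=\alpha$, $\Div u=\beta$ on $S$, combined with $u\cdot n=0$ (or $u\times n=0$), form a complementing (Lopatinski--Shapiro) boundary system. Quoting the corresponding ADN estimate and differentiating the system tangentially to bootstrap, one arrives at
\[
	\norm{u}_{H^{k+1}(\Omega)}\le c_{\Omega}\big(\norm{\alpha}_{H^k(\Omega)}+\norm{\beta}_{H^k(\Omega)}+\norm{u}_{L_2(\Omega)}\big).
\]

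It remains to absorb $\norm{u}_{L_2(\Omega)}$ into the left-hand side. I would argue by contradiction: were the clean estimate false, there would be $u_n$ with $\norm{u_n}_{H^{k+1}(\Omega)}=1$ while $\norm{\alpha_n}_{H^k(\Omega)}+\norm{\beta_n}_{H^k(\Omega)}\to 0$; by the compact embedding $H^{k+1}(\Omega)\hookrightarrow L_2(\Omega)$ a subsequence converges to a field $u$ with $\Rot u=0$, $\Div u=0$ and the prescribed boundary condition, that is, a harmonic field. Since $\Omega$ is a solid pipe, hence contractible, the spaces of Neumann and of Dirichlet harmonic fields are both trivial, so $u=0$; but the displayed estimate then forces $\norm{u_n}_{H^{k+1}(\Omega)}\to 0$, contradicting $\norm{u_n}_{H^{k+1}(\Omega)}=1$. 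This closes the induction.

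The step I expect to be the main obstacle is the boundary regularity near the edges where $S_1$ meets $S_2$: the cylinder is only piecewise smooth, so the ADN machinery does not apply verbatim there. The remedy I would pursue is to use that $S_2$ is flat with $n=(0,0,\pm1)$ and that the boundary conditions decouple into even and odd symmetries in $x_3$; reflecting $u$ across $S_2$ extends the problem to a domain bounded by the smooth lateral surface $S_1$ alone, where the interior and boundary estimates hold, after which one restricts back. Checking that each admissible boundary condition is compatible with this reflection, so that the extended field keeps the required regularity and still satisfies a div--curl system with reflected data, is the delicate point of the argument.
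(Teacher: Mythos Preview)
Your outline is sound, and in fact it is considerably more detailed than what the paper does: the paper's own ``proof'' is a bare citation to Duvaut--Lions (for $k=0$) and to Solonnikov's paper on overdetermined elliptic boundary-value problems (for general $k$), with no further argument. What you sketch---the Gaffney identity for the base case, the ADN/Lopatinski--Shapiro framework for higher regularity, and the compactness argument to drop the lower-order term---is essentially the content of those references, so your route and the paper's are the same at the level of mathematics, only yours is unpacked.

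Two remarks. First, your handling of the edges by even/odd reflection across $S_2$ is exactly the device the paper itself employs elsewhere (see the proof of Lemma~\ref{lem27} and Remark~\ref{rem7}), so you are in good company; the compatibility check you flag as delicate reduces to observing that under $u\cdot n\vert_{S_2}=0$ the third component is odd and the first two even in $x_3$, while under $u\times n\vert_{S_2}=0$ the parities swap, and in either case $\Rot u$ and $\Div u$ extend consistently. Second, your compactness step needs the space of harmonic fields (with the relevant boundary condition) to be trivial; for a solid cylinder this is indeed the case since the domain is contractible, but it is worth noting that the paper silently assumes the cross-section $\{\varphi\le c_0\}$ is simply connected---otherwise the estimate without a lower-order term would fail for $u\cdot n\vert_S=0$.
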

\begin{proof}
	For the proof we refer the reader to \cite[Ch. 7, Thm. 6.1]{duv} (case $k = 0$) and \cite{sol1} (case $k \in \mathbb{N}$). In the latter general overdetermined elliptic systems were examined. In particular, the case of tangent components of $u$ was considered. 
\end{proof}

\begin{lem}\label{lem27}
	Let $\alpha$ and $\beta$ be positive constants. Let us define the operator $L$ by the formula
	\begin{equation*}
		L = - \alpha \triangle - \beta\nabla \Div = \sum_{i,j=1}^3 a^{ij} \partial_{x_i}\partial_{x_j}.
	\end{equation*}
	Next, consider the problem
	\begin{equation*}
		\begin{aligned}
			&Lu = f & &\text{in $\Omega$}, \\
			&u = 0 & &\text{on $S_1$}, \\
			&u' = 0, \quad u_{3,x_3} = 0 & &\text{on $S_2$}.
		\end{aligned}
	\end{equation*}
	Then $L$ is uniformly elliptic and for any $f \in L_2(\Omega)$ the estimate
	\begin{equation*}
		\norm{u}_{H^2(\Omega)} \leq c_{\alpha,\beta,\Omega}\norm{f}_{L_2(\Omega)}
	\end{equation*}
	holds.
\end{lem}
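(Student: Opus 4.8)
The plan is to split the statement into two parts: first verify uniform ellipticity by inspecting the principal symbol of $L$, then establish the $H^2$ estimate by combining an $H^1$ bound from Lax--Milgram with elliptic regularity for the mixed boundary value problem.

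For ellipticity I would compute the principal symbol. Writing the operator component-wise as $(Lu)_k = -\alpha\triangle u_k - \beta\,\partial_{x_k}\Div u$, its symbol acting on a vector $u\in\mathbb{R}^3$ at a covector $\xi\in\mathbb{R}^3\setminus\{0\}$ is the matrix $A(\xi) = \alpha\abs{\xi}^2\Id + \beta\,\xi\otimes\xi$. Then $u^{\top}A(\xi)u = \alpha\abs{\xi}^2\abs{u}^2 + \beta(\xi\cdot u)^2 \ge \alpha\abs{\xi}^2\abs{u}^2$, so the eigenvalues of $A(\xi)$ are $\alpha\abs{\xi}^2$ (on $\xi^{\perp}$) and $(\alpha+\beta)\abs{\xi}^2$ (along $\xi$), all bounded below by $\alpha\abs{\xi}^2 > 0$ since $\alpha,\beta > 0$. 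Hence $L$ is uniformly (strongly) elliptic with ellipticity constant $\alpha$.

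Next I would produce a weak solution together with an $H^1$ bound. Introduce the space $V = \{u\in H^1(\Omega)\colon u\vert_{S_1} = 0,\ u'\vert_{S_2} = 0\}$ and the bilinear form $a(u,w) = \alpha\int_{\Omega}\nabla u:\nabla w\,\ud x + \beta\int_{\Omega}\Div u\,\Div w\,\ud x$. Integrating $Lu = f$ against $w\in V$ via Green's formula produces boundary terms $\alpha\int_S(\partial_n u)\cdot w\,\ud S + \beta\int_S(\Div u)(w\cdot n)\,\ud S$, and I would check these vanish: on $S_1$ because $w = 0$; on $S_2$ because $w_1 = w_2 = 0$ kills the first term together with $u_{3,x_3}\vert_{S_2} = 0$, while $u'\vert_{S_2} = 0$ forces the tangential derivatives $u_{1,x_1},u_{2,x_2}$ to vanish there, so $\Div u\vert_{S_2} = 0$ kills the second term. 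Thus the weak formulation is consistent with the boundary conditions. Coercivity of $a$ on $V$ follows from $a(u,u)\ge\alpha\norm{\nabla u}_{L_2(\Omega)}^2$ and the Poincar\'e inequality (available because $u = 0$ on $S_1$, which has positive surface measure), giving $a(u,u)\ge c_P^{-1}\alpha\norm{u}_{H^1(\Omega)}^2$. Lax--Milgram then yields a unique $u\in V$ with $\norm{u}_{H^1(\Omega)}\le c_{\alpha,\beta,\Omega}\norm{f}_{L_2(\Omega)}$.

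The remaining and harder step is upgrading this to the full $H^2$ bound. I would verify that the boundary operators satisfy the complementing (Shapiro--Lopatinski) condition for the strongly elliptic system $L$ and then invoke the Agmon--Douglis--Nirenberg/Solonnikov regularity theory, in the same spirit as Lemma \ref{l2} and the references cited there. Interior $H^2$ regularity is standard; near $S_1$ one flattens the side boundary and uses tangential difference quotients (in the $x_3$ and arc-length directions) to control tangential second derivatives, recovering the normal second derivative from the equation by ellipticity; near $S_2$ the conditions $u' = 0$, $u_{3,x_3} = 0$ are precisely a reflection-type condition, so extending $u_1,u_2$ oddly and $u_3$ evenly across $x_3 = \pm a$ removes the top and bottom boundaries and reduces the analysis there to the interior/side case. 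The genuine obstacle is the edge $\overline{S_1}\cap\overline{S_2}$, where the cylinder is non-smooth and the two sets of boundary conditions meet; here I must check that the Dirichlet condition on $S_1$ and the mixed condition on $S_2$ are compatible (both force $u = 0$ on the edge, since $u\vert_{S_1} = 0$ gives in particular $u_3 = 0$ there, consistent with $u' = 0$), so that no singular corner behavior is generated and the $H^2$ estimate survives. Collecting the interior, $S_1$-, $S_2$- and edge contributions and absorbing the lower-order terms by the $H^1$ bound already proved gives $\norm{u}_{H^2(\Omega)}\le c_{\alpha,\beta,\Omega}\norm{f}_{L_2(\Omega)}$, as claimed.
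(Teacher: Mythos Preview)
Your proposal is correct and covers the same ground as the paper, but the two proofs are organised differently in two places.

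For ellipticity, you compute the principal symbol $A(\xi)=\alpha\abs{\xi}^2\Id+\beta\,\xi\otimes\xi$ and read off the eigenvalues $\alpha\abs{\xi}^2$ and $(\alpha+\beta)\abs{\xi}^2$; this is the standard verification of the Legendre--Hadamard condition for a constant-coefficient system and is cleaner than what the paper does. The paper instead writes down a fixed $3\times 3$ matrix with $\alpha+\beta$ on the diagonal and $\beta$ off it and studies its characteristic polynomial; the connection between that matrix and the operator $L$ acting on vector fields is not made precise there, so your symbol argument is the more transparent route.

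For the $H^2$ estimate, both arguments use the same reflection across $S_2$ (odd for $u_1,u_2$, even for $u_3$) to remove the top and bottom, and then localise and flatten near $S_1$. The difference is in how the lower-order term is handled. The paper first proves $\norm{u}_{H^2}\le c(\norm{f}_{L_2}+\norm{u}_{H^1})$ by a partition of unity in $x_3$ and invoking the classical half-space/whole-space estimates, and only afterwards removes $\norm{u}_{H^1}$ by a Rellich--Kondrachov compactness/contradiction argument. You instead get $\norm{u}_{H^1}\le c\norm{f}_{L_2}$ up front via Lax--Milgram (coercivity from $a(u,u)\ge\alpha\norm{\nabla u}_{L_2}^2$ plus Poincar\'e, using $u\vert_{S_1}=0$), and then feed this into the regularity step. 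Your route gives existence and uniqueness of the weak solution for free and avoids the contradiction argument; the paper's route is more self-contained in that it never sets up a variational formulation. Either way the edge $\overline{S_1}\cap\overline{S_2}$ is dealt with by the reflection, which your sketch makes explicit and the paper leaves implicit.
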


\begin{proof}
	First we check the ellipticity. It means that there should exist a constant $\theta > 0$ such that
	\begin{equation*}
		\sum_{i,j = 1}^3 a^{ij} \xi_i\xi_j \geq \theta \abs{\xi}^2
	\end{equation*}
	for all $\xi \in \mathbb{R}^3$. In other words we want to demonstrate that the symmetric matrix $A = (a^{ij})$ of the form 
	\begin{equation*}
		\begin{bmatrix}
			\alpha + \beta & \beta & \beta \\
			\beta & \alpha + \beta & \beta \\
			\beta & \beta & \alpha + \beta
		\end{bmatrix}
	\end{equation*}
	which corresponds to the operator $L$, is positive definite and its smallest eigenvalue is greater than $\theta$. To compute its eigenvalues we solve the equation $(\det A - t\Id) = 0$ with respect to $t$. We see that
	\begin{equation*}
		\det A - t\Id = \left(\alpha + \beta - t\right)^3 + 2\beta^3 - 3\beta^2(\alpha + \beta) = (\alpha + \beta - t)^3 - \beta^3 - 3\alpha\beta^2 = f(t).
	\end{equation*}
	Since
	\begin{equation*}
		f'(t) = -3(\alpha + \beta - t)^2 < 0
	\end{equation*}
	we immediately deduce that $f$ is decreasing. Therefore there exists only one $t^*$ such that $f(t^*) = 0$. Since $f(0) = (\alpha + \beta)^3 - \beta^3 - 3\alpha\beta^2 = \alpha^3 + 3\alpha^2\beta > 0$ we infer that $t^* > 0$. This implies that the smallest eigenvalue of the matrix $A$ is positive. Hence the operator $L$ is uniformly elliptic.

	To prove the estimate we proceed in a standard way. First, we introduce a partition of unity $\sum_{k = 0}^N \zeta_k(x_3) = 1$ on $\Omega$. Let us denote $\bar u = u \zeta_k$. For fixed $k$ four cases may occur:
	\begin{description}
		\item[1. $\supp\zeta_k \cap S = \emptyset$.] In this case we deal with the problem in the whole space
			\begin{equation*}
				\begin{aligned}
					&L\bar u = \bar f + [0,0,2\alpha\nabla u_3\cdot \nabla \zeta_k + \alpha u_3\triangle \zeta_k] + \beta\zeta_{k,x_3}\nabla \omega_3 =: F_k & &\text{in $\supp\zeta_k\cap\Omega$}, \\
					&\bar u = 0 & &\text{on $\partial\left(\supp \zeta_k\cap \Omega\right)$}.
				\end{aligned}
			\end{equation*}
			From the classical theory (see \cite[Ch.2, \S 3.2, Thm. 3.1]{lions}) it follows that
			\begin{equation}\label{eq45}
				\norm{\bar u}_{H^2(\supp\zeta_k\cap\Omega)} \leq c_{\Omega}\left(\norm{F_k}_{L_2(\supp\zeta_k\cap\Omega)} + \norm{\bar u}_{H^1(\supp\zeta_k\cap\Omega)}\right). 
			\end{equation}
		\item[2. $\supp\zeta_k \cap S_1 \neq \emptyset$, $\supp\zeta_k \cap S_2 = \emptyset$.] Since $\bar u\vert_{S_1} = 0$ we obtain that $\bar u\vert_{\partial(\supp\zeta_k\cap\Omega)} = 0$. Next, we transform the set $\supp \zeta_k\cap \Omega$ into the half-space and apply the result from classical theory for the half-space (see \cite[Ch. 2, \S 4.5, Thm. 4.3]{lions}), which finally gives \eqref{eq45}. For the meticulous details we refer the reader to the proof of Theorem 5.1 in \cite[Ch. 2, \S 5.1]{lions}.
		\item[3. $\supp\zeta_k \cap S_1 \neq \emptyset$, $\supp\zeta_k \cap S_2 \neq \emptyset$.] Let us recall that $u'\vert_{S_2} = 0$ and $u_{3,x_3}\vert_{S_2} = 0$. Thus $\bar u'\vert_{S_2} = 0$ and $\bar u_{3,x_3}\vert_{S_2} = u_3\zeta_{k,x_3} = 0$, which follows from the fact that $\zeta_k = \zeta_k(x_3)$. It allows us to reflect the function $\bar u$ outside the cylinder according to the formula
			\begin{equation*}
				\check u (x) = \begin{cases} \bar u(x) & x_3 \in \overline{\supp\zeta_k\cap \Omega}, \\
													(\bar u'(\bar x),-\bar u_3(\bar x)) & x_3 \leq -a, \\
													(\bar u'(\tilde x), -\bar u_3(\tilde x)) & x_3 \geq a,\end{cases}
			\end{equation*}
			where $\bar x = (x',-2a - x_3)$ and $\tilde x = (x', 2a - x_3)$. Note that $\bar u\vert_{\partial(\supp\zeta_k\cap\Omega)} = 0$, so we may proceed as in Case 2.
		\item[4. $\supp\zeta_k \cap S_1 = \emptyset$, $\supp\zeta_k \cap S_2 \neq \emptyset$.] This case does not differ from the previous one in any major way. We also reflect the function $\bar u$ as described above and proceed as in Case 2. 
	\end{description}
	Summing over $k$ yields
	\begin{equation*}
		\norm{u}_{H^2(\Omega)} \leq \sum_{k = 1}^N \norm{u\zeta_k}_{H^2(\Omega)} \leq c_{\Omega}\sum_{k = 1}^N\left(\norm{F_k}_{L_2(\Omega)} + \norm{u\zeta_k}_{H^1(\Omega)}\right) \leq c_{\alpha,\beta,\Omega}\left(\norm{f}_{L_2(\Omega)} + \norm{u}_{H^1(\Omega)}\right).
	\end{equation*}
	It remains to eliminate the last term on the right-hand side. We shall prove that the inequality $\norm{Lu}_{L_2(\Omega)} = \norm{f}_{L_2(\Omega)} \geq c_{\Omega} \norm{u}_{H^1(\Omega)}$ holds. Conversely, suppose that it is not true. Then there would exist sequences $u_l$ in $H^1(\Omega)$ and $f_l$ in $L_2(\Omega)$ such that
	\begin{equation*}
		\begin{aligned}
			&Lu_l = f_l & &\text{in $\Omega$}, \\
			&u_l = 0 & &\text{on $S_1$}, \\
			&u_l' = 0, \quad u_{3,x_3} = 0 & &\text{on $S_2$}
		\end{aligned}
	\end{equation*}
	and $\norm{u_l}_{H^1(\Omega)} \geq l \norm{f_l}_{L_2(\Omega)}$. Let us define $v_l = \frac{u_l}{\norm{u_l}_{H^1(\Omega)}}$. Then 
	\begin{equation*}
		\norm{Lv_l}_{L_2(\Omega)} = \frac{\norm{Lu_l}_{L_2(\Omega)}}{\norm{u_l}_{H^1(\Omega)}} = \frac{\norm{f_l}_{L_2(\Omega)}}{\norm{u_l}_{H^1(\Omega)}} \leq \frac{1}{l}.
	\end{equation*}
	From the above inequality we see that $v_l$ is bounded in $H^2(\Omega)$. The Rellich-Kondrachov Compactness Theorem implies that there exist a subsequence $v_{l_k}$ which converges strongly in $H^1(\Omega)$ to some element $v$. Thus
	\begin{equation*}
		\begin{cases} v_{l_k} \rightharpoonup v & \text{in $H^2(\Omega)$}, \\
							v_{l_k} \to v & \text{in $H^1(\Omega)$}.
		\end{cases}
	\end{equation*}
	On the other hand we see that for every $\phi \in \mathcal{C}^{\infty}_c(\Omega)$
	\begin{equation*}
		\int_{\Omega} v \cdot \phi_{,x_ix_j}\, \ud x = \lim_{l_k \to \infty} \int_{\Omega} v^{l_k} \cdot \phi_{,x_ix_j}\, \ud x = \lim_{l_k \to \infty} \int_{\Omega} v^{l_k}_{,x_ix_j} \cdot \phi\, \ud x = 0,
	\end{equation*}
	whereas $\norm{v_l}_{H^1(\Omega)} = 1$, which is a contradiction.
\end{proof}

\begin{lem}\label{lem16}
	Let us denote $\Omega^t = \Omega\times (t_0,t)$. Consider the following initial-boundary value problem
	\begin{equation}\label{eq41}
		\begin{aligned}
			&u_{,t} - \alpha\triangle u - \beta \nabla \Div u = F & &\text{in } \Omega^t, \\
			&u = 0 & &\text{on } S_1^t, \\
			&u' = 0, \qquad u_{3,x_3} = 0 & &\text{on } S_2^t, \\
			&u\vert_{t = t_0} = u(t_0) & &\text{on } \Omega\times\{t = t_0\},
		\end{aligned}
	\end{equation}
	where $\alpha$ and $\beta$ are positive constants. Assume that $F \in L_p(\Omega^t)$. Then there exist a unique solution $u$ such that $u \in W^{2,1}_p(\Omega^t)$ and
	\begin{equation*}
		\norm{u}_{W^{2,1}_p(\Omega^t)} \leq c_{\Omega} \left(\norm{F}_{L_p(\Omega)} + \norm{u(t_0)}_{W^{2 - \frac{2}{p}}_p(\Omega)}\right).
	\end{equation*}	
\end{lem}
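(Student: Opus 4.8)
The plan is to prove maximal $L_p$-regularity for the parabolic operator $\partial_t + L$, where $L = -\alpha\triangle - \beta\nabla\Div$, by localizing the problem and reducing it to model problems in the whole space and in the half-space, in complete parallel with the elliptic argument of Lemma \ref{lem27}. The first step is to observe that $\partial_t + L$ is parabolic in the sense of Petrovskii: the eigenvalue computation carried out in Lemma \ref{lem27} shows that the symmetric matrix $A = (a^{ij})$ is positive definite with smallest eigenvalue $\theta > 0$, so for every $\xi \in \mathbb{R}^3\setminus\{0\}$ and every $\lambda$ with $\mathrm{Re}\,\lambda \ge 0$ the matrix $\lambda\,\Id + \sum_{i,j}a^{ij}\xi_i\xi_j$ is invertible. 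This is precisely the condition needed to bring in the general $L_p$-theory for parabolic systems of Solonnikov \cite{sol2}, which provides unique solvability together with the maximal-regularity estimate for the model problems below.

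Next I would localize by the partition of unity $\sum_{k=0}^N\zeta_k(x_3)=1$ used in Lemma \ref{lem27}. Writing $\bar u = u\zeta_k$, the commutator $[L,\zeta_k]u$ contributes only first- and zeroth-order terms, which are moved into a new right-hand side $F_k$, so that $\bar u_{,t} + L\bar u = F_k$; since $\zeta_k$ depends on $x_3$ only, $F_k \in L_p$. The same four cases as in Lemma \ref{lem27} arise. When $\supp\zeta_k\cap S = \emptyset$ one is on the whole space and applies the interior parabolic estimate directly. When $\supp\zeta_k$ meets $S_1$ only, the condition $\bar u|_{S_1}=0$ is pure Dirichlet, and after flattening the lateral boundary one invokes the half-space parabolic theory. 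When $\supp\zeta_k$ meets $S_2$ (cases 3 and 4) I would extend $\bar u$ across $x_3=\pm a$ by the mixed even/odd reflection of Lemma \ref{lem27}, which is the extension compatible with the boundary data $\bar u'|_{S_2}=0$ and $\bar u_{3,x_3}|_{S_2}=0$; the reflected field $\check u$ then solves a parabolic problem of the same type on an enlarged cylinder having no boundary on $S_2$, returning us to the two previous cases.

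The principal obstacle is verifying that the boundary operators satisfy the Lopatinskii--Shapiro complementing condition with respect to $\partial_t + L$, and in controlling the edge $S_1\cap S_2$. On $S_1$ the complementing condition is automatic, since a Dirichlet system always complements a properly elliptic operator; on $S_2$ the reflection device removes the boundary altogether, so there is nothing to verify there once one checks that the special block structure of $L$ --- it acts compatibly on the tangential and normal parts separately --- is preserved by the reflection and that the boundary conditions $u'=0$, $u_{3,x_3}=0$ are exactly the ones under which the mixed extension stays in $W^{2,1}_p$. The corner pieces must be treated by a corner-adapted chart in which one first reflects in $x_3$ and then flattens $S_1$; I expect this edge analysis to be the most delicate technical point, though it is routine once the two model reductions are available.

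Finally I would sum the local estimates over $k$. This produces
\[
	\norm{u}_{W^{2,1}_p(\Omega^t)} \le c_\Omega\Big(\norm{F}_{L_p(\Omega^t)} + \norm{\nabla u}_{L_p(\Omega^t)} + \norm{u}_{L_p(\Omega^t)} + \norm{u(t_0)}_{W^{2-\frac{2}{p}}_p(\Omega)}\Big),
\]
where $W^{2-\frac{2}{p}}_p(\Omega)$ is the correct trace space for the initial value of a $W^{2,1}_p$ function by the standard parabolic trace theorem. The first-order term is absorbed into the left-hand side through the interpolation inequality of Lemma \ref{lem10} with a small parameter $\epsilon$ (taking $r=0$, $s=1$, $q=p$), and the remaining zeroth-order term $\norm{u}_{L_p(\Omega^t)}$ is eliminated by a Gronwall argument based on $u_{,t} = F - Lu$, which crucially leaves the constant independent of $t$. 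Global existence then follows from the solvability of the localized model problems together with this uniform a priori estimate, e.g.\ by the method of continuity, and uniqueness follows at once from linearity, since the difference of two solutions solves the homogeneous problem, to which the a priori estimate applies and forces it to vanish. I would also record that the hypothesis $u(t_0)\in W^{2-\frac{2}{p}}_p(\Omega)$ implicitly carries the low-order compatibility conditions on $S$ that are needed when $p$ is large enough for the traces to be defined.
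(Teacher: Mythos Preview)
Your localization-plus-Solonnikov strategy is exactly the one the paper uses: partition of unity in $x_3$, commutator into the right-hand side, whole-space and half-space model problems from \cite{sol2}, and the mixed even/odd reflection across $S_2$ to dispose of the top and bottom. The paper in fact refers back to Lemma \ref{lem27} for the case analysis and only indicates the parabolic analogue, so your write-up is if anything more explicit about the Lopatinskii--Shapiro check and the edge pieces.

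The one point where you diverge is the elimination of the lower-order term and the time-independence of the constant. After summing, the paper arrives at
\[
	\norm{u}_{W^{2,1}_p(\Omega^t)} \le c_\Omega\Big(\norm{F}_{L_p(\Omega^t)} + \norm{u}_{L_p(t_0,t;W^1_p(\Omega))} + \norm{u(t_0)}_{W^{2-\frac{2}{p}}_p(\Omega)}\Big),
\]
and removes the middle term by invoking the \emph{energy estimate} for \eqref{eq41} (multiply by $u$, integrate, use the boundary conditions). It also notes explicitly that the constants coming out of \cite{sol2} may a priori depend on $t$, and appeals to von Wahl \cite{wah} together with that energy estimate to make them time-independent. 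Your proposal instead absorbs $\norm{\nabla u}_{L_p}$ by interpolation (fine) and then claims that a Gronwall argument on $u_{,t}=F-Lu$ removes $\norm{u}_{L_p(\Omega^t)}$ ``crucially leav[ing] the constant independent of $t$''. That last claim is the weak spot: a straightforward Gronwall or a short-time absorption followed by iteration over subintervals produces a constant that grows with $t$ (typically like $e^{ct}$ or linearly in the number of steps). If you want a $t$-independent constant you should either follow the paper and combine the energy inequality with \cite{wah}, or give a genuinely different mechanism and spell it out.
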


\begin{proof}
	Let us introduce a partition of unity $\sum_{k = 1}^N \zeta_k(x_3) = 1$ and denote $\bar u = u\zeta_k$. Then we can repeat the considerations from the proof of previous Lemma (Lemma \ref{lem27}). However, there is a slight difference:
	\begin{description}
		\item[1. $\supp\zeta_k \cap S = \emptyset$.] In this case we have
			\begin{equation*}
				\begin{aligned}
					u_{,t} + L\bar u = \bar F + [0,0,2\nabla u_3\cdot \nabla \zeta_k + u_3\triangle \zeta_k] =: F_k& & &\text{in $\supp\zeta_k\cap\Omega$}, \\
					\bar u = 0 & & &\text{on $\partial\left(\supp \zeta_k\cap \Omega\right)$},
				\end{aligned}
			\end{equation*}
			which is seen as the problem in the whole space. From \cite[Thm. 1.1]{sol2} it follows that
			\begin{equation}\label{eq48}
				\norm{\bar u}_{W^{2,1}_p(\supp\zeta_k\cap\Omega^t)} \leq c_{\Omega} \left(\norm{F_k}_{L_p(\supp\zeta_k\cap\Omega^t)} + \norm{\bar u(t_0)}_{W^{2 - \frac{2}{p}}_p(\supp\zeta_k\cap\Omega)}\right).
			\end{equation}
	\end{description}
	Originally, the constant which appears on the right-hand side may depend on time. However, since we have the energy estimates for solutions to \eqref{eq41}, we can utilize \cite{wah} to exclude the time dependence of the constant. 
	The remaining cases are reduced to the ones presented in Lemma \ref{lem27} analogously: we consider three cases when the supports of the cut-off functions $\zeta_k$ touch the boundary and reduce every case to the problem in the half-space. Then we utilize Theorem 5.5 from \cite{sol2} to obtain \eqref{eq48} but in the half space. Summing over $k$ yields
	\begin{equation*}
		\norm{u}_{W^{2,1}_p(\Omega^t)} \leq c_{\Omega}\left(\norm{F}_{L_p(\Omega^t)} + \norm{u}_{L_p(t_0,t_1;W^1_p(\Omega))} + \norm{u(t_0)}_{W^{2 - \frac{2}{p}}_p(\Omega)}\right).
	\end{equation*}
	To eliminate the second term on the right-hand side we use the energy estimate for solutions to \eqref{eq41}. This ends the proof.
\end{proof}

\begin{rem}
	In the above proof we omitted certain details related to the estimates in the half space. In subsequent considerations they will also be omitted. We refer the interested reader to \cite[\S 6, proofs of Theorems 1.1 and 1.2]{sol2}. 

	It is worth mentioning that an alternative approach was presented in \cite[\S 3]{bur}.
\end{rem}

\begin{lem}\label{lem17}
	Consider the Stokes problem
	\begin{equation}\label{eq42}
		\begin{aligned}
			&v_{,t} - (\nu + \nu_r) \triangle v + \nabla p = F & &\text{in $\Omega^t$}, \\
			&\Div v = 0 & &\text{in } \Omega^t, \\
			&v\cdot n = 0 & &\text{on } S^t, \\
			&\Rot v \times n = 0 & &\text{on } S^t, \\
			&v\vert_{t = t_0} = v(t_0) & &\text{on } \Omega\times\{t = t_0\}.
		\end{aligned}
	\end{equation}
	If $F \in L_p(\Omega)$ then there exist a solution to the above problem such that $v \in W^{2,1}_p(\Omega^t)$ and the estimate
	\begin{equation*}
		\norm{v}_{W^{2,1}_p(\Omega^t)} + \norm{\nabla p}_{L_p(\Omega^t)} \leq c_{\Omega}\left( \norm{F}_{L_p(\Omega)} + \norm{v(t_0)}_{W^{2 - \frac{2}{p}}_p(\Omega)}\right)
	\end{equation*}
	holds.
\end{lem}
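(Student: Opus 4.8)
The plan is to repeat the localization-and-reflection scheme of Lemmas \ref{lem27} and \ref{lem16}, the only genuinely new feature being that the pressure and the divergence constraint must now be carried through every step. I would treat as the external model input the maximal $L_p$-regularity for the Stokes system under the slip (Navier) boundary conditions in the two canonical geometries, the whole space and the half-space; this is precisely the content of \cite{ala}. The cylinder $\Omega$ is then reduced, piece by piece, to one of these model problems, and the time-independence of the resulting constant is secured, as in Lemma \ref{lem16}, by combining the energy estimate for \eqref{eq42} with \cite{wah}.

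Concretely, I would fix a partition of unity $\sum_{k=1}^N \zeta_k(x_3) = 1$ and set $\bar v = v\zeta_k$, $\bar p = p\zeta_k$. Multiplying \eqref{eq42} by $\zeta_k$ and commuting $\zeta_k$ past the derivatives gives
\begin{equation*}
	\bar v_{,t} - (\nu + \nu_r)\triangle \bar v + \nabla \bar p = \zeta_k F - (\nu + \nu_r)\big(2\zeta_{k,x_3} v_{,x_3} + v\,\zeta_{k,x_3x_3}\big) + p\,\nabla\zeta_k, \qquad \Div \bar v = v_3\,\zeta_{k,x_3}.
\end{equation*}
In contrast to the scalar parabolic situation of Lemma \ref{lem16}, the localized field $\bar v$ is no longer divergence free: the cut-off produces the inhomogeneous divergence datum $g_k := v_3\zeta_{k,x_3}$ and, in the momentum equation, the source $p\,\nabla\zeta_k$ involving the pressure itself. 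I would therefore apply the version of the model estimate from \cite{ala} that admits nonzero divergence data, controlling $\bar v$ and $\nabla\bar p$ by $\norm{\zeta_k F}_{L_p}$, by the commutator terms (which are of lower order, since $\nabla\zeta_k$ and $\triangle\zeta_k$ are bounded), by an appropriate fractional norm of $g_k$, and by $\norm{p\,\nabla\zeta_k}_{L_p}$.

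The four cases are distinguished exactly as before, according to whether $\supp\zeta_k$ meets $S_1$, $S_2$, both, or neither. Pieces touching only $S_1$ are flattened and sent to the half-space result. Pieces touching $S_2$ are instead reflected across $x_3 = \pm a$, the tangential components $\bar v'$ and the pressure $\bar p$ evenly and the normal component $\bar v_3$ oddly. The point is that the boundary relations collected in Lemma \ref{lem3}, namely $v_3\vert_{S_2}=0$, $v_{1,x_3}\vert_{S_2}=v_{2,x_3}\vert_{S_2}=0$ and $v_{3,x_3x_3}\vert_{S_2}=0$, are exactly what is needed for the reflected velocity to be $W^{2,1}_p$ across $x_3=\pm a$; moreover one checks directly that this parity preserves $\Div \check v = 0$ and turns the slip condition $\Rot v\times n=0$ into a valid reflected Stokes problem (after reflecting $F'$ evenly and $F_3$ oddly). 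Here the Navier condition is essential: $\Rot v\times n=0$ survives the reflection, whereas a Dirichlet condition would supply no information on the normal derivatives of $v$ on $S_2$ and the reflection would fail.

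Summing over $k$ then yields
\begin{equation*}
	\norm{v}_{W^{2,1}_p(\Omega^t)} + \norm{\nabla p}_{L_p(\Omega^t)} \leq c_\Omega\Big(\norm{F}_{L_p(\Omega^t)} + \norm{v}_{L_p(t_0,t;W^1_p(\Omega))} + \norm{p}_{L_p(\Omega^t)} + \norm{v(t_0)}_{W^{2 - \frac{2}{p}}_p(\Omega)}\Big),
\end{equation*}
and the intermediate term $\norm{v}_{L_p(t_0,t;W^1_p(\Omega))}$ is removed by the energy estimate for \eqref{eq42}, exactly as at the close of Lemma \ref{lem16}. The main obstacle I anticipate is precisely the bookkeeping for the pressure and the inhomogeneous divergence under localization: the term $p\,\nabla\zeta_k$ is \emph{not} manifestly of lower order, since $\norm{p}_{L_p}$ is tied to $\norm{\nabla p}_{L_p}$ only through a Poincar\'e-type estimate of the same strength. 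I expect to dispose of it by normalizing $p$ to zero mean and invoking a separate, lower-order pressure estimate (the pressure satisfying $\triangle p = \Div F$ with the boundary data induced by \eqref{p4}, so that $\norm{p}_{L_p}$ is controlled by $\norm{F}_{L_p}$ plus terms absorbable on the left), and likewise verifying that the fractional norm of $g_k$ demanded by the half-space theory is genuinely subordinate to $\norm{v}_{W^{2,1}_p}$. Everything else, including the reflection, is routine once Lemma \ref{lem3} is in hand.
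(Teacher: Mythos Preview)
Your approach is essentially the same as the paper's: both localize by a partition of unity in $x_3$, reflect across $x_3=\pm a$ to eliminate the corners (reducing to the whole-space and half-space model problems of \cite{ala}), and then strip the time dependence from the constants via the energy estimate for \eqref{eq42} together with \cite{wah}. The paper's write-up is terser than yours and does not spell out the pressure/inhomogeneous-divergence bookkeeping you worry about; conversely, the paper makes one point you omit, namely that \cite{ala} is stated for the condition $n\cdot\mathbb{D}(v)\cdot\tau_\alpha + \gamma\, v\cdot\tau_\alpha = b_\alpha$ rather than for $\Rot v\times n=0$ directly, so Lemma~\ref{lem32} is needed to identify the two (with $\gamma$ equal to the curvature of $S$ and $b_\alpha=0$), and one must note that the argument in \cite{ala} goes through unchanged when the constant friction coefficient is replaced by this smooth function.
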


\begin{proof}
	For the proof we refer the reader to \cite{ala}. However, few remarks are required. The cited article is concerned with domains whose boundaries belong to $W^{2 - \frac{1}{p}}_p$ which imposes a fundamental limitation on $p$ --- it cannot be smaller or equal to dimension $d$ of the space ($d = 3$ in our case). However, we would like to set $p = 2$. Since $p$ is strongly related to the regularity of the boundary, we should assume that $S \in \mathcal{C}^2$. Then $p > 1$ (for a closer correspondence between the regularity class of $S$ and $p$ see \cite[p. 217, the first paragraph]{ala}). Note, that at least the H\"older continuity of the first derivative of $u$ is necessary to justify the calculations that appear.

	The boundary of the domain under consideration in this thesis has corners, thus it does not belong to $\mathcal{C}^2$ class. However, we may locally reflect the interior of the cylinder with respect to the planes $x_3 = \pm a$ (similarly as in proof of Lemma \ref{lem27}), which leads to two different cases, depending whether we are near to $S_1$ or not. In the first case we would deal, after straightening $S_1$, with the model problem in the half space, as it was demonstrated in \cite[Sec. 3]{ala}. In the second case we would consider a model problem in the whole space. 
	
	The above presented approach has a major drawback. The estimates obtained contain constants that depend on time (see \cite[Sec. 4]{ala}). But we have the energy estimates for solutions to \eqref{eq42} and therefore the application of the result from \cite{wah} implies that the constants are time independent.

	The last comment we make concerns the boundary conditions. In this thesis we use the Navier condition for the velocity field, whereas in \cite{ala} some more general condition was adopted, i.e. 
	\begin{equation}\label{eq49}
		n\cdot \mathbb{D}(v)\cdot \tau_i + \gamma v\cdot \tau_i = b_i. 
	\end{equation}
	The vectors $n$ and $\tau_i$ denote the normal outward and the tangent vectors to the boundary. By $\mathbb{D}(v)$ a dilatation tensor (i.e. $\frac{1}{2}\left(\nabla v + \nabla^{\perp} v\right)$) is understood. The positive constant $\gamma$ represents the friction. Thus, \eqref{eq49} describes the perfect slip with friction. On the other hand we know (see Lemma \ref{lem32}) that if $\gamma$ denoted the curvature of the boundary then the left hand-side in \eqref{eq49} would be equal to $-\frac{1}{2}(\Rot v\times n)\cdot \tau_{\alpha}$, which in our case would have implied that $b_i = 0$. 

Since we assume that $S_i\in \mathcal{C}^2$, $i \in \{1,2\}$, the curvature is a smooth function. In the proof presented in \cite{ala} the constant $\gamma$ was set to $1$ without the loss of generality which in case of our boundary conditions corresponds to the domain of circular cross-section, whose radius is equal to $1$. But as long as $\gamma$ remains a smooth function the whole proof can repeated without any significant difficulties. 

\end{proof}

\section{Auxiliary problems}

\begin{lem}\label{lem12}
	Let $v$, $\theta$ and $f_{,x_3}$ be given. Then the pair $(h,q)$ is a solution to the problem
	\begin{equation}\label{p5}
		\begin{aligned}
			&h_{,t} - (\nu + \nu_r)\triangle h + \nabla q = -v \cdot \nabla h - h \cdot \nabla v + 2\nu_r\Rot \theta + f_{,x_3} & &\text{in $\Omega^t$}, \\
			&\Div h = 0 & &\text{in $\Omega^t$}, \\
			&\Rot h \times n = 0, \quad h\cdot n = 0 & &\textrm{on $S_1^t$}, \\
			&h' = 0, \qquad h_{3,x_3} = 0 & &\textrm{on $S_2^t$}, \\
			&h\vert_{t = t_0} = h(t_0) & &\textrm{in $\Omega$}.
		\end{aligned}
	\end{equation}
\end{lem}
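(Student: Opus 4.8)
The plan is to derive \eqref{p5} simply by differentiating the original system \eqref{p1} in the $x_3$-direction and setting $h = v_{,x_3}$, $\theta = \omega_{,x_3}$ and $q = p_{,x_3}$. First I would differentiate the momentum equation \eqref{p1}$_1$ with respect to $x_3$. Since $\partial_{x_3}$ commutes with $\partial_t$, with the Laplacian, and with $\nabla$ and $\Rot$, the only term requiring attention is the convective one, where the product rule gives $\partial_{x_3}\big((v\cdot\nabla)v\big) = (v\cdot\nabla)h + (h\cdot\nabla)v$. Collecting terms, and writing $\nabla q = \partial_{x_3}\nabla p$ and $2\nu_r\Rot\theta = \partial_{x_3}(2\nu_r\Rot\omega)$, reproduces exactly \eqref{p5}$_1$. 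Differentiating the constraint $\Div v = 0$ and using that $\Div$ commutes with $\partial_{x_3}$ yields $\Div h = 0$, which is \eqref{p5}$_2$, while the initial condition \eqref{p5}$_5$ is immediate from evaluating $h = v_{,x_3}$ at $t = t_0$.

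The genuinely delicate part is the set of boundary conditions, and here the proof splits according to the two pieces of $S$. On $S_2$ there is nothing new to prove: the conditions $h' = 0$ and $h_{3,x_3} = 0$ are precisely assertion (c) of Lemma \ref{lem3}, which was already deduced from \eqref{p2}. On $S_1$ the argument relies on the cylindrical geometry. The lateral surface $S_1 = \{\varphi(x_1,x_2) = c_0,\ -a < x_3 < a\}$ is ruled in the $x_3$-direction, so $\partial_{x_3}$ is a \emph{tangential} derivative along $S_1$; moreover, by \eqref{p4} the outward normal $n\vert_{S_1}$ depends only on $(x_1,x_2)$, whence $n_{,x_3}\vert_{S_1} = 0$. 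Differentiating the trace identity $v\cdot n = 0$, which holds on all of $S_1^t$, tangentially in $x_3$ therefore gives $0 = \partial_{x_3}(v\cdot n) = v_{,x_3}\cdot n + v\cdot n_{,x_3} = h\cdot n$ on $S_1$. Applying the same reasoning to $\Rot v\times n = 0$, and using $\Rot h = (\Rot v)_{,x_3}$, yields $0 = \partial_{x_3}(\Rot v\times n) = (\Rot v)_{,x_3}\times n + \Rot v\times n_{,x_3} = \Rot h\times n$ on $S_1$. This establishes \eqref{p5}$_3$ and completes the list of boundary conditions.

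I expect the principal obstacle to be the rigorous justification that $\partial_{x_3}$ may be applied to the boundary identities, that is, that the trace relations $v\cdot n = 0$ and $\Rot v\times n = 0$ can legitimately be differentiated along $x_3$ within $S_1$. This is exactly the point at which the product structure of $\Omega$ in the $x_3$-variable — and the resulting $x_3$-independence of $n$ on $S_1$ — is indispensable: for a general domain the tangential differentiation of these identities would produce additional curvature terms (through $n_{,x_3}$) and would not reproduce the clean conditions of \eqref{p5}. Since all differentiations are carried out at the level of the already sufficiently regular solution, the tangential derivatives of the traces are well defined, and the above manipulations are justified in the appropriate trace sense. The remaining computations (the product rule for the convective term and the commutation identities) are routine, so the content of the lemma is concentrated entirely in the geometric observation used for the $S_1$ conditions.
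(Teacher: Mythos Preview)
Your proposal is correct and follows essentially the same approach as the paper: differentiate \eqref{p1} in $x_3$, use that $x_3$ is tangential on $S_1$ (so the trace identities \eqref{p2}$_{1,2}$ may be differentiated along $S_1$ with $n_{,x_3}=0$), and invoke Lemma~\ref{lem3}(c) for the conditions on $S_2$. If anything, you are more explicit than the paper, which records only the one-line observation ``$x_3$ is the tangent direction'' for \eqref{p5}$_3$ and does not separately comment on $h\cdot n=0$.
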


\begin{proof}
	Equations \eqref{p5}$_{1,2}$ follow directly from \eqref{p1}$_{1,3}$ by differentiating with respect to $x_3$. In the same way we obtain the boundary condition \eqref{p5}$_3$ from \eqref{p2}$_2$ because $x_3$ is the tangent direction. The condition \eqref{p5}$_4$ was proved in Lemma \ref{lem3}$(c)$. The initial condition \eqref{p5}$_5$ follows from \eqref{p7}.
\end{proof}

\begin{rem}\label{rem1}
	For the function $h$ from lemma above the Poincar\'e inequality holds. Since $h'$ vanishes on $S_2$ we only need to check if the integral of $h_3$ over $\Omega$ equals zero. We have
	\begin{equation*}
		\int_{\Omega} h_3\, \ud x = \int_{\Omega} v_{3,x_3}\, \ud x = \int_{S_2(x_3 = -a)} v_3\, \ud x' - \int_{S_2(x_3 = a)} v_3\, \ud x' = 0.
	\end{equation*}
\end{rem}

\begin{lem}\label{lem4}
    Let $h$, $\omega$, $g'$ and $g_{,x_3}$ be given. Then the function $\theta$ is solution to the problem
    \begin{equation}\label{p6}
        \begin{aligned}
            &\theta_{,t} - \alpha \triangle \theta - \beta \nabla \Div \theta + 4\nu_r \theta = -h\cdot \nabla \omega - v \cdot \nabla \theta + 2 \nu_r \Rot h + g_{,x_3} & &\text{in $\Omega^t$}, \\
            &\theta = 0 & &\text{on $S_1^t$}, \\
            &\theta_3 = 0, \qquad \theta'_{,x_3} = -\frac{1}{\alpha}g' & &\text{on $S_2^t$}, \\
            &\theta\vert_{t = t_0} = \theta(t_0) & &\text{in $\Omega$}.
        \end{aligned}
    \end{equation}
\end{lem}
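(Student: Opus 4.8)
The plan is to mirror the derivation of Lemma~\ref{lem12}: problem~\eqref{p6} arises by differentiating the microrotation equation \eqref{p1}$_3$, together with its boundary and initial conditions, in the $x_3$ direction. Applying $\partial_{x_3}$ to \eqref{p1}$_3$ and using $h = v_{,x_3}$, $\theta = \omega_{,x_3}$ together with the commutation of $\partial_{x_3}$ with $\triangle$, $\nabla\Div$ and $\Rot$, every linear term transforms in the obvious way ($\alpha\triangle\omega \mapsto \alpha\triangle\theta$, $4\nu_r\omega\mapsto 4\nu_r\theta$, $2\nu_r\Rot v\mapsto 2\nu_r\Rot h$, $g\mapsto g_{,x_3}$). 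The only term requiring attention is the convective one, where $\partial_{x_3}(v\cdot\nabla\omega) = h\cdot\nabla\omega + v\cdot\nabla\theta$; moving these two contributions to the right-hand side yields exactly \eqref{p6}$_1$.

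For the boundary data, the condition $\theta|_{S_1} = 0$ follows because $x_3$ is a tangent direction along the lateral surface, so differentiating $\omega|_{S_1}=0$ in $x_3$ gives $\omega_{,x_3}|_{S_1}=0$, precisely as in Lemma~\ref{lem12}. On $S_2$ the condition $\theta_3=0$ is immediate, since $\theta_3 = \omega_{3,x_3}$ and $\omega_{3,x_3}|_{S_2}=0$ by \eqref{p2}$_4$. The initial condition \eqref{p6}$_4$ follows at once from \eqref{p7} and $\theta=\omega_{,x_3}$.

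The main obstacle is the remaining condition $\theta'_{,x_3} = -\frac{1}{\alpha}g'$ on $S_2$. This one cannot be obtained by differentiating a boundary condition, because on $S_2$ the direction $x_3$ is \emph{normal} rather than tangential, so $\omega'|_{S_2}=0$ gives no control over $\omega'_{,x_3x_3}$. Instead I would restrict the tangential (prime) components of \eqref{p1}$_3$ to $S_2$ and show that all terms vanish except the two desired ones. The terms $\omega'_{,t}$ and $4\nu_r\omega'$ vanish because $\omega'|_{S_2}=0$; the convective term vanishes since the tangential derivatives $\omega'_{,x_1},\omega'_{,x_2}$ vanish on $S_2$ while $v_3|_{S_2}=0$ (Lemma~\ref{lem3}(a)); in $(\triangle\omega)'$ the two tangential second derivatives vanish, leaving $\omega'_{,x_3x_3}=\theta'_{,x_3}$; the term $(\nabla\Div\omega)'$ vanishes because $\omega_{1,x_1x_1}$ and $\omega_{2,x_2x_1}$ vanish (from $\omega'|_{S_2}=0$) while $\omega_{3,x_3x_1}=\theta_{3,x_1}$ vanishes as $\theta_3|_{S_2}=0$; and $2\nu_r(\Rot v)'$ vanishes by Lemma~\ref{lem3}(b). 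What survives is $-\alpha\theta'_{,x_3} = g'$, i.e.\ \eqref{p6}$_3$. The delicate point is precisely the bookkeeping that forces the various tangential second derivatives to drop out on $S_2$, which is where Lemma~\ref{lem3} carries the weight of the argument.
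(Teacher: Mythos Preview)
Your proposal is correct and follows essentially the same approach as the paper: differentiate the microrotation equation in $x_3$ to obtain \eqref{p6}$_1$, use that $x_3$ is tangential on $S_1$ for \eqref{p6}$_2$, and restrict the prime components of the microrotation equation to $S_2$---invoking Lemma~\ref{lem3} to kill all terms but $-\alpha\theta'_{,x_3}$ and $g'$---to get \eqref{p6}$_3$. Your term-by-term bookkeeping on $S_2$ is more explicit than the paper's (which simply says ``from Lemma~\ref{lem3} we deduce immediately''), but the argument is the same.
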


\begin{proof}
	Equation \eqref{p6}$_1$ follows directly from \eqref{p1}$_2$ by differentiating along $x_3$ direction. Analogously, we compute the boundary condition \eqref{p6}$_2$, because $x_3$ is the tangent direction. 

	To prove \eqref{p6}$_3$ we take two first components of \eqref{p1}$_2$ 
	\begin{equation*}
		\omega'_{,t} + v\cdot \nabla \omega' - \alpha \triangle' \omega' - \alpha \partial^2_{x_3x_3} \omega' - \beta\nabla'\Div \omega + 4\nu_r \omega' = 2\nu_r (\Rot v)' + g'
	\end{equation*}
	and project them onto $S_2$. From Lemma \ref{lem3} we deduce immediately that
	\begin{equation*}
		-\alpha \theta'_{,x_3}\vert_{S_2} = g'\vert_{S_2}.
	\end{equation*}

	The initial condition \eqref{p6}$_4$ follows from \eqref{p7}.
\end{proof}

\begin{rem}\label{rem2}
	Let us notice that for $\theta$ the Poincar\'e inequality holds. For $\theta_3$, which vanishes on $S_2$ it is obvious. For $\theta'$ we simply calculate the mean value:
	\begin{equation*}
		\int_{\Omega} \theta'\, \ud x = \int_{\Omega} \omega'_{,x_3}\, \ud x = \int_{S_2(x_3 = -a)}\omega'\, \ud x' - \int_{S_2(x_3 = a)}\omega'\, \ud x' = 0,
	\end{equation*}
	which follows from \eqref{p2}$_4$.
\end{rem}

\begin{lem}\label{lem7}
	Let $v$, $h$, $\omega$ and $F_3$ be given. Then the function $\chi$ is solution to the following set of equations:
	\begin{equation}\label{p9}
		\begin{aligned}
			&\begin{aligned} 
            &\chi_{,t} + v\cdot \nabla \chi - h_3\chi + h_2v_{3,x_1} - h_1v_{3,x_2} - (\nu + \nu_r)\triangle \chi \\
            &\mspace{100mu} = F_3 + 2\nu_r\left((\Rot \omega)_{2,x_1} - (\Rot\omega)_{1,x_2}\right)
         \end{aligned}& &\textrm{in } \Omega^t,\\
         &\chi = 0 & &\text{on $S_1^t$}, \\
         &\chi,_{x_3} = 0 & &\text{on $S_2^t$}, \\
         &\chi\vert_{t = t_0} = \chi(t_0) & &\text{in $\Omega$}.
		\end{aligned}
	\end{equation}
\end{lem}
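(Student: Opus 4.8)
The plan is to recognize that $\chi$ denotes the third component of the vorticity, $\chi = (\Rot v)_3 = v_{2,x_1} - v_{1,x_2}$ (consistent with the relation $\chi_{,x_3} = v_{2,x_1x_3} - v_{1,x_2x_3}$ already used in the proof of Lemma \ref{lem3}), and that $F_3 = (\Rot f)_3 = f_{2,x_1} - f_{1,x_2}$. The governing equation \eqref{p9}$_1$ should then be derived by applying the two-dimensional scalar curl $\partial_{x_1}(\cdot)_2 - \partial_{x_2}(\cdot)_1$ to the momentum equation \eqref{p1}$_1$. So the whole lemma is a formal differentiation of the system, together with a reading-off of boundary and initial conditions.

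First I would write out the first and second components of \eqref{p1}$_1$, differentiate the second with respect to $x_1$ and the first with respect to $x_2$, and subtract. The pressure contribution $p_{,x_2x_1} - p_{,x_1x_2}$ vanishes since mixed partials commute, the time derivative yields $\chi_{,t}$, and the viscous term collapses to $-(\nu + \nu_r)\triangle\chi$ by linearity. The right-hand side immediately produces $2\nu_r\big((\Rot\omega)_{2,x_1} - (\Rot\omega)_{1,x_2}\big) + F_3$, matching the prescribed source term.

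The only genuine computation is the nonlinear block $\partial_{x_1}(v\cdot\nabla v_2) - \partial_{x_2}(v\cdot\nabla v_1)$. Expanding by the product rule, the terms in which a component of $v$ multiplies a second derivative of $v$ reassemble into the transport term $v\cdot\nabla\chi$. The remaining first-order products regroup as $(v_{2,x_1} - v_{1,x_2})(v_{1,x_1} + v_{2,x_2}) + v_{3,x_1}v_{2,x_3} - v_{3,x_2}v_{1,x_3}$. Here I would invoke the incompressibility $\Div v = 0$ to replace $v_{1,x_1} + v_{2,x_2}$ by $-v_{3,x_3} = -h_3$, and use the notation $h = v_{,x_3}$ to write $v_{1,x_3} = h_1$, $v_{2,x_3} = h_2$. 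This turns the leftover into $-h_3\chi + h_2v_{3,x_1} - h_1v_{3,x_2}$, which is exactly the lower-order group appearing in \eqref{p9}$_1$.

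For the side boundary, the Navier condition \eqref{p2}$_2$ forces $\Rot v$ to be parallel to $n$; since $n\vert_{S_1}$ has vanishing third component by \eqref{p4}, this gives $\chi = (\Rot v)_3 = 0$ on $S_1^t$, which is \eqref{p9}$_2$. The condition \eqref{p9}$_3$, namely $\chi_{,x_3}\vert_{S_2} = 0$, was already established in Lemma \ref{lem3}$(b)$, so I would simply cite it. The initial condition \eqref{p9}$_4$ is nothing more than the definition of $\chi(t_0)$ via \eqref{p7}. The main (and essentially only) obstacle is the careful bookkeeping of the convective terms --- keeping track of which products coalesce into $v\cdot\nabla\chi$ and correctly applying $\Div v = 0$ to generate the $-h_3\chi$ term; the remainder is immediate differentiation and cancellation.
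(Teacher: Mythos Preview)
Your proposal is correct and follows essentially the same route as the paper: apply the two-dimensional curl $\partial_{x_1}(\cdot)_2 - \partial_{x_2}(\cdot)_1$ to \eqref{p1}$_1$ to obtain \eqref{p9}$_1$, read off $\chi\vert_{S_1}=0$ from the Navier condition \eqref{p2}$_2$ together with \eqref{p4}, cite Lemma~\ref{lem3}(b) for \eqref{p9}$_3$, and take \eqref{p9}$_4$ from \eqref{p7}. In fact you supply more detail on the convective bookkeeping (the emergence of $-h_3\chi + h_2 v_{3,x_1} - h_1 v_{3,x_2}$ via $\Div v = 0$) than the paper does, and your derivation of $\chi\vert_{S_1}=0$ via ``$\Rot v$ parallel to $n$'' is equivalent to the paper's argument that $\Rot v \cdot \tau_2 = 0$ on $S_1$.
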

\begin{proof}
	To deduce \eqref{p9}$_1$ we simply differentiate \eqref{p1}$_1$ with respect to $x_1$, subtract it from \eqref{p1}$_1$ differentiated with respect to $x_2$ and take the third component.

	To get \eqref{p9}$_2$ we multiply \eqref{p2}$_2$ by $\tau_1$ (see \eqref{p4}$_1$). It yields
	\begin{equation*}
		\left(\Rot v \times n \right) \cdot \tau_1 = 0 \qquad \Rightarrow \qquad \Rot v \cdot \tau_2 = 0 \quad \Leftrightarrow \quad v_{2,x_1} - v_{1,x_2} = 0.
	\end{equation*}

	The condition \eqref{p9}$_3$ was derived in Lemma \ref{lem3}$(b)$. The initial condition \eqref{p9}$_4$ follows from \eqref{p7} in the same manner as \eqref{p9}$_1$ from \eqref{p1}$_1$.
\end{proof}

\section{Energy estimates}\label{sec3.1}

The prime goal of this Section is to establish certain basic energy estimates, we formulate three lemmas. The first one presents an estimate for the velocity and microrotation fields (see Lemma \ref{l4}) in $V_2^0(\Omega^t)$ space. In the second we derive estimates for the functions $h$ and $\theta$ (see Lemma \ref{lem8}) and in the third for the function $\chi$ (see Lemma \ref{lem9}). Unlike for the functions $v$ and $\omega$, the inequalities for $h$, $\omega$ and $\chi$ in $V_2^0(\Omega^t)$ contain on the right-hand side a term which is a priori unknown and cannot be estimated by the data. This term, i.e. $\norm{h}_{L_\infty(t_0,t_1;L_3(\Omega))}$, will therefore appear every time we make use of these inequalities. It will be particularly visible in the subsequent Section, where we pay attention to higher order derivatives of the functions $v$, $h$ and $\omega$. Finally, it will be absorbed by the left-hand side but at the cost of a smallness assumption. As a consequence we shall get an estimate for $v$ and $\omega$ in $W^{2,1}_2(\Omega^t)$ space in terms of the data only. 

The below Lemma demonstrates the estimate for the functions $v$ and $\omega$.

\begin{lem}\label{l4}
	Let $E_{v,\omega}(t) < \infty$ hold (see \eqref{eq14}$_1$). Then for any $t_0 \leq t \leq t_1$ we have
	\begin{equation*}
		\norm{v}_{V_2^0(\Omega^t)} + \norm{\omega}_{V_2^0(\Omega^t)} \leq c_{\alpha,\nu,I,\Omega} E_{v,\omega}(t).
    \end{equation*}
\end{lem}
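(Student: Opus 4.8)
The plan is to derive the basic energy estimate for the coupled system \eqref{p1}--\eqref{p2} by testing the velocity equation with $v$ and the microrotation equation with $\omega$, then adding the results. First I would multiply \eqref{p1}$_1$ by $v$ and integrate over $\Omega$. The convective term $\int_\Omega (v\cdot\nabla v)\cdot v\, \ud x$ vanishes because $\Div v = 0$ and $v\cdot n\vert_S = 0$ (integrate by parts: it equals $-\tfrac12\int_\Omega |v|^2 \Div v\, \ud x + \tfrac12\int_S |v|^2 v\cdot n\, \ud S = 0$). The viscous term is handled by the integration-by-parts identity of Lemma \ref{l1}: using $-\triangle v = \Rot\Rot v - \nabla\Div v = \Rot\Rot v$ together with the Navier condition $\Rot v\times n\vert_S = 0$, one gets $-(\nu+\nu_r)\int_\Omega \triangle v\cdot v\,\ud x = (\nu+\nu_r)\norm{\Rot v}^2_{L_2(\Omega)}$, the boundary term dropping out. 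The pressure term vanishes since $\Div v = 0$ and $v\cdot n\vert_S = 0$. This yields
\begin{equation*}
	\tfrac12\Dt \norm{v}^2_{L_2(\Omega)} + (\nu+\nu_r)\norm{\Rot v}^2_{L_2(\Omega)} = 2\nu_r\int_\Omega \Rot\omega\cdot v\, \ud x + \int_\Omega f\cdot v\, \ud x.
\end{equation*}

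Next I would multiply \eqref{p1}$_3$ by $\omega$ and integrate. The convective term again vanishes by $\Div v = 0$ and $v\cdot n\vert_S = 0$. For the second-order terms, integration by parts gives $-\alpha\int_\Omega \triangle\omega\cdot\omega = \alpha\norm{\nabla\omega}^2_{L_2(\Omega)}$ and $-\beta\int_\Omega \nabla\Div\omega\cdot\omega = \beta\norm{\Div\omega}^2_{L_2(\Omega)}$; here the boundary terms vanish because of the conditions $\omega\vert_{S_1} = 0$ and $\omega'\vert_{S_2} = 0$, $\omega_{3,x_3}\vert_{S_2} = 0$ from \eqref{p2}$_{3,4}$. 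The zeroth-order term contributes $4\nu_r\norm{\omega}^2_{L_2(\Omega)}$, which is a helpful positive quantity. This produces
\begin{equation*}
	\tfrac12\Dt \norm{\omega}^2_{L_2(\Omega)} + \alpha\norm{\nabla\omega}^2_{L_2(\Omega)} + \beta\norm{\Div\omega}^2_{L_2(\Omega)} + 4\nu_r\norm{\omega}^2_{L_2(\Omega)} = 2\nu_r\int_\Omega \Rot v\cdot\omega\,\ud x + \int_\Omega g\cdot\omega\,\ud x.
\end{equation*}

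The key structural point is the coupling: adding the two identities, the cross terms $2\nu_r\int_\Omega \Rot\omega\cdot v$ and $2\nu_r\int_\Omega \Rot v\cdot\omega$ are equal by Lemma \ref{l1} (the boundary term $\int_S v\times n\cdot\omega\, \ud S$ vanishes since $\omega\vert_{S_1}=0$ and on $S_2$ the relevant components match up via \eqref{p2}). I would then estimate the combined cross term $4\nu_r\int_\Omega \Rot v\cdot\omega$ by Young's inequality, absorbing an $\epsilon\norm{\Rot v}^2$ piece into the left-hand viscous term and a $C\norm{\omega}^2$ piece into the $4\nu_r\norm{\omega}^2$ term on the left; the balance of the dissipation is what makes this absorption possible and is where the dependence $c_{\alpha,\nu}$ enters. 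The forcing terms $\int_\Omega f\cdot v$ and $\int_\Omega g\cdot\omega$ are bounded using Hölder with exponents $\tfrac65, 6$, then the embedding $H^1(\Omega)\hookrightarrow L_6(\Omega)$ and Young's inequality, so that $\norm{f}_{L_{6/5}}$ and $\norm{g}_{L_{6/5}}$ appear (matching the definition \eqref{eq14} of $E_{v,\omega}$) while $\norm{\Rot v}^2$ and $\norm{\nabla\omega}^2$ terms are again absorbed; Poincaré-type control of $\norm{v}_{L_2}$ by $\norm{\Rot v}_{L_2}$ via Lemma \ref{l2} converts $\norm{\Rot v}$ into full $H^1$ control.

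The final step is integration in time from $t_0$ to $t$ and an application of Grönwall's inequality. After absorption we obtain an inequality of the form $\Dt(\norm{v}^2 + \norm{\omega}^2) + c(\norm{v}^2_{H^1} + \norm{\omega}^2_{H^1}) \leq c\,(\norm{f}^2_{L_{6/5}} + \norm{g}^2_{L_{6/5}})$; integrating bounds both the supremum-in-time of the $L_2$ norms and the time-integral of the $H^1$ norms, which together constitute the $V^0_2(\Omega^t)$ norm, yielding the claimed bound by $c_{\alpha,\nu,I,\Omega}E_{v,\omega}(t)$. The main obstacle I anticipate is tracking the constants carefully so that none of them depend on $t$ (as the paper repeatedly stresses): in particular, the Grönwall argument must be closed so that only the dissipation, not a Grönwall exponential in $t$, is used, which works here precisely because the coupling cross terms and the forcing can be absorbed into the dissipation with time-independent constants rather than requiring a Grönwall factor that grows with $t$.
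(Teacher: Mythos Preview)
Your proposal is correct and follows essentially the same approach as the paper: test \eqref{p1} with $(v,\omega)$, combine the coupling terms via Lemma~\ref{l1} into $4\nu_r\int_\Omega\Rot v\cdot\omega$, absorb with Young into the dissipation and the $4\nu_r\norm{\omega}^2$ term, handle the forcing by the $L_{6/5}$--$L_6$ pairing and the embedding $H^1\hookrightarrow L_6$, invoke Lemma~\ref{l2}, and integrate in time. The only cosmetic difference is that the paper rewrites $-\alpha\triangle\omega$ as $\alpha\Rot\Rot\omega - \alpha\nabla\Div\omega$ before integrating by parts (obtaining $\alpha\norm{\Rot\omega}^2+(\alpha+\beta)\norm{\Div\omega}^2$ rather than your $\alpha\norm{\nabla\omega}^2+\beta\norm{\Div\omega}^2$), and---as you yourself note---no Gr\"onwall step is actually used.
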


\begin{proof}
	First, recall that $-\alpha \triangle \omega = \alpha\Rot\Rot \omega - \alpha\nabla\Div\omega$. Thus, multiplying \eqref{p1}$_1$ by $v$, \eqref{p1}$_2$ by $\omega$, integrating over $\Omega$ and utilizing Lemma \ref{l1} yields
	\begin{multline*}
		\frac{1}{2}\Dt \norm{v}_{L_2(\Omega)}^2 + (\nu + \nu_r)\norm{\Rot v}_{L_2(\Omega)}^2 + (\nu + \nu_r)\int_S \Rot v\times n \cdot v\,\ud S = 2\nu_r\int_\Omega \Rot \omega \cdot v\, \ud x + \int_\Omega f\cdot v\, \ud x, \\
		\frac{1}{2}\Dt \norm{\omega}_{L_2(\Omega)}^2 + \alpha\norm{\Rot \omega}_{L_2(\Omega)}^2 - \alpha\int_S \omega\times n\cdot \Rot \omega\, \ud S + (\alpha + \beta)\norm{\Div \omega}_{L_2(\Omega)}^2 + 4\nu_r\int_\Omega \omega^2\, \ud x \\
		= 2\nu_r\int_\Omega \Rot v \cdot \omega\, \ud x + \int_\Omega g\cdot \omega\, \ud x.
	\end{multline*}
	Observe that the term with the pressure vanished due to $\Div v = 0$ and $v\cdot n\vert_S = 0$. Because of \eqref{p2}$_{2,3,4}$ the boundary integrals above are equal to zero. Adding both equalities and utilizing Lemma \ref{l1} again gives
	\begin{multline*}
		\frac{1}{2}\Dt \norm{v}_{L_2(\Omega)}^2 + \frac{1}{2}\Dt \norm{\omega}_{L_2(\Omega)}^2 + (\nu + \nu_r)\norm{\Rot v}_{L_2(\Omega)}^2 \\
		+ \alpha\norm{\Rot \omega}_{L_2(\Omega)}^2 + (\alpha + \beta)\norm{\Div \omega}_{L_2(\Omega)}^2 + 4\nu_r\int_\Omega \omega^2\, \ud x \\
		= 4\nu_r\int_\Omega \Rot v \cdot \omega\, \ud x + \int_S \omega\times n \cdot v\,\ud S + \int_\Omega f\cdot v\, \ud x + \int_\Omega g\cdot \omega\, \ud x.
	\end{multline*}
	By the means of the H\"older and the Young inequalities with $\epsilon$ we can estimate the right hand side in the following way
    \begin{multline*}
        4\nu_r\int_\Omega \Rot v \cdot \omega\, \ud x + \int_\Omega f\cdot v\, \ud x + \int_\Omega g\cdot \omega\, \ud x \\
        \leq 4\nu_r\epsilon_1 \norm{\Rot v}_{L_2(\Omega)}^2 + \frac{\nu_r}{\epsilon_1}\norm{\omega}_{L_2(\Omega)}^2 + \epsilon_2\norm{v}^2_{L_6(\Omega)} + \frac{1}{4\epsilon_2}\norm{f}^2_{L_{\frac{6}{5}}(\Omega)} \\
        + \epsilon_3\norm{\omega}^2_{L_6(\Omega)} + \frac{1}{4\epsilon_3}\norm{g}^2_{L_{\frac{6}{5}}(\Omega)}. 
    \end{multline*}
    Taking now $\epsilon_1 = \frac{1}{4}$, $\epsilon_2 = \frac{\nu c_\Omega}{2c_I}$, and $\epsilon_3 = \frac{\alpha c_\Omega}{2c_I}$, where the constant $c_I$ comes from the imbedding $H^1(\Omega) \hookrightarrow L_6(\Omega)$, and using Lemma \ref{l2} we see that
    \begin{equation*}
        \Dt \norm{v}_{L_2(\Omega)}^2 + \Dt\norm{\omega}_{L_2(\Omega)}^2 + \frac{\nu}{c_\Omega} \norm{v}_{H^1(\Omega)}^2 + \frac{\alpha}{c_{\Omega}} \norm{\omega}_{H^1(\Omega)}^2 \leq \frac{c_I}{\nu c_\Omega}\norm{f}^2_{L_{\frac{6}{5}}(\Omega)} + \frac{c_I}{\alpha c_\Omega}\norm{g}^2_{L_{\frac{6}{5}}(\Omega)}.
    \end{equation*}
    Integrating with respect to time on $(t_0,t)$ yields
    \begin{multline*}
        \norm{v}_{V_2^0(\Omega^t)}^2 + \norm{\omega}_{V_2^0(\Omega^t)}^2 \\
        \leq \frac{1}{\min\left\{1,\frac{\nu}{c_\Omega}, \frac{\alpha}{c_{\Omega}}\right\}} \bigg(\frac{c_I}{\nu c_\Omega}\norm{f}^2_{L_2(t_0,t;L_{\frac{6}{5}}(\Omega))} + \frac{c_I}{\alpha c_\Omega}\norm{g}^2_{L_2(t_0,t;L_{\frac{6}{5}}(\Omega))} \\
        + \norm{v(t_0)}_{L_2(\Omega)}^2 + \norm{\omega(t_0)}_{L_2(\Omega)}^2\bigg).
    \end{multline*}
    This completes the proof.
\end{proof}

In the second Lemma in this Section we are looking for an estimate for $h$ and $\theta$. As mentioned, we obtain only inequality with an a priori unknown term on the right-hand side, which is assumed to be finite. At this stage we are not provided with any appropriate means to control that term, thereby postponing its estimation till more conclusive results are derived (see Lemma \ref{lem6} in the next Section).

Before we formulate Lemma, let us state the following remarks:
\begin{rem}\label{rem3}
	Observe that $q\vert_{S_2} = f_3\vert_{S_2}$. 

	Indeed, taking the third component of \eqref{p1}, projecting it into $S_2$ and using Lemma \ref{lem3} gives
	\begin{equation*}
		q\vert_{S_2} = -v\cdot \nabla v \cdot n\vert_{S_2} + f_3\vert_{S_2} + 2\nu_r\Rot \omega\cdot n\vert_{S_2} = f_3\vert_{S_2}.
	\end{equation*}
\end{rem}

\begin{rem}\label{rem7}
	We will prove that
	\begin{equation*}
		\norm{h}_{H^{k + 1}(\Omega)} \leq c_{\Omega}\norm{\Rot h}_{H^k(\Omega)}.
	\end{equation*}
	To this end, let us first consider the following problem
	\begin{equation*}
		\begin{aligned}
			&\Rot h = \alpha & &\text{in $\Omega$}, \\
			&\Div h = 0 & &\text{in $\Omega$}, \\
			&h \cdot n = 0 & &\text{on $S_1$},\\
			&h \times n = 0 & &\text{on $S_2$}.
		\end{aligned}
	\end{equation*}
	Introduce a partition of unity $\sum_{k = 1}^N \zeta_k(x_3) = 1$. If we denote $\bar h = h \zeta_k$, then the above system becomes
	\begin{equation*}
		\begin{aligned}
			&\Rot \bar h = \bar \alpha + [-h_2\zeta_{k,x_3},h_1\zeta_{k,x_3},0] & &\text{in $\supp\zeta_k\cap\Omega$}, \\
			&\Div \bar h = h_3\zeta_{k,x_3} & &\text{in $\supp\zeta_k\cap\Omega$}, \\
			&\bar h \cdot n = 0 & &\text{on $\supp\zeta_k\cap S_1$}, \\
			&\bar h \times n = 0 & &\text{on $\supp\zeta_k\cap S_2$}.
		\end{aligned}
	\end{equation*}
	Next, we perform similar computations as we did in proof of Lemma \ref{lem27}.
\end{rem}

\begin{lem}\label{lem8}
	Let $E_{v,\omega}(t) < \infty$ and $E_{h,\theta}(t) < \infty$. Additionally, let $g'\vert_{S_2} = 0$, $f_3\vert_{S_2} = 0$. Finally, assume that $\norm{h}_{L_\infty(t_0,t;L_3(\Omega))} < \infty$. Then
	\begin{equation*}
		\norm{h}_{V_2^0(\Omega^t)} + \norm{\theta}_{V_2^0(\Omega^t)} \leq c_{\alpha,\nu,I,\Omega} \left(E_{v,\omega}(t) \norm{h}_{L_{\infty}(t_0,t;L_3(\Omega))} + E_{h,\theta}(t)\right).
	\end{equation*}
\end{lem}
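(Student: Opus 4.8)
The plan is to derive the asserted energy inequality for the pair $(h,\theta)$ by testing the evolution equations \eqref{p5} and \eqref{p6} with $h$ and $\theta$ respectively, integrating over $\Omega$, and then summing. First I would multiply \eqref{p5}$_1$ by $h$ and integrate over $\Omega$; using $-(\nu+\nu_r)\triangle h = (\nu+\nu_r)\Rot\Rot h - (\nu+\nu_r)\nabla\Div h$ together with $\Div h = 0$ and Lemma \ref{l1}, the viscous term produces $(\nu+\nu_r)\norm{\Rot h}^2_{L_2(\Omega)}$ plus a boundary integral $\int_S \Rot h\times n\cdot h\,\ud S$. The boundary term splits over $S_1$ and $S_2$: on $S_1$ it vanishes by the Navier condition \eqref{p5}$_3$, and on $S_2$ it vanishes because $h'\vert_{S_2}=0$ forces the tangential contribution to drop out (Lemma \ref{lem3}$(c)$). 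The pressure term $\int_\Omega \nabla q\cdot h\,\ud x = -\int_\Omega q\Div h\,\ud x + \int_S q\,h\cdot n\,\ud S$ vanishes since $\Div h=0$ and $h\cdot n\vert_{S_1}=0$, $h_3\vert_{S_2}=0$ (the latter from $v_3\vert_{S_2}=0$ differentiated). Symmetrically, testing \eqref{p6}$_1$ with $\theta$ yields $\alpha\norm{\Rot\theta}^2 + (\alpha+\beta)\norm{\Div\theta}^2 + 4\nu_r\norm{\theta}^2$ plus boundary terms; here the condition $\theta'_{,x_3}\vert_{S_2}=-\tfrac1\alpha g'$ with the hypothesis $g'\vert_{S_2}=0$ is exactly what makes the corresponding boundary integral vanish.

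Next I would handle the right-hand side nonlinearities. The cross terms $2\nu_r\int_\Omega\Rot\theta\cdot h$ and $2\nu_r\int_\Omega\Rot h\cdot\theta$ are, after an integration by parts via Lemma \ref{l1} and cancellation of boundary contributions, essentially symmetric and absorbable by the dissipative $\norm{\Rot h}^2$ and $\norm{\Rot\theta}^2$ terms using Young's inequality with small parameters, just as in the proof of Lemma \ref{l4}. The forcing terms $\int_\Omega f_{,x_3}\cdot h$ and $\int_\Omega g_{,x_3}\cdot\theta$ are bounded by $\norm{f_{,x_3}}_{L_{6/5}}\norm{h}_{L_6}$ and $\norm{g_{,x_3}}_{L_{6/5}}\norm{\theta}_{L_6}$ (Hölder), then Young plus the embedding $H^1\hookrightarrow L_6$ and Lemma \ref{l2} (in the form of Remark \ref{rem7} for $h$, and the Poincaré inequalities guaranteed by Remarks \ref{rem1} and \ref{rem2}) convert $\norm{h}_{L_6}$, $\norm{\theta}_{L_6}$ into $\norm{\Rot h}_{L_2}$, $\norm{\Rot\theta}_{L_2}$, again absorbable on the left. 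These contributions account for the $E_{h,\theta}(t)$ piece of the final bound.

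The genuinely delicate terms are the quadratic convection and stretching integrals $-\int_\Omega(v\cdot\nabla h)\cdot h$, $-\int_\Omega(h\cdot\nabla v)\cdot h$, $-\int_\Omega(h\cdot\nabla\omega)\cdot\theta$ and $-\int_\Omega(v\cdot\nabla\theta)\cdot\theta$. The two transport terms with $v$ vanish (or nearly so) after integration by parts because $\Div v = 0$ and $v\cdot n\vert_S=0$, leaving only harmless boundary remnants that drop by the boundary conditions. The truly problematic ones are $\int_\Omega(h\cdot\nabla v)\cdot h$ and $\int_\Omega(h\cdot\nabla\omega)\cdot\theta$: here I would apply Hölder with exponents $3,6,2$ to get, for instance, $\norm{h}_{L_3}\norm{\nabla v}_{L_2}\norm{h}_{L_6}$, then use $\norm{\nabla v}_{L_2}$ controlled by $\norm{v}_{V_2^0}$ (hence by $E_{v,\omega}$ through Lemma \ref{l4}) and $\norm{h}_{L_6}\le c_I\norm{\Rot h}_{L_2}$. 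This is precisely where the factor $\norm{h}_{L_\infty(t_0,t;L_3(\Omega))}$ is forced to appear and why it cannot yet be absorbed: it multiplies $E_{v,\omega}$ in the final estimate. The main obstacle is therefore the careful bookkeeping of these cubic-in-norm terms so that, after Young's inequality, every occurrence of $\norm{\Rot h}^2_{L_2}$ and $\norm{\Rot\theta}^2_{L_2}$ with small coefficient is absorbed by the left-hand dissipation while the surviving factor $\norm{h}_{L_\infty(t_0,t;L_3)}$ remains as a multiplier of $E_{v,\omega}(t)$.

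Finally, after absorbing all $\epsilon$-terms, I would be left with a differential inequality of the form $\Dt(\norm{h}^2_{L_2}+\norm{\theta}^2_{L_2}) + c(\norm{h}^2_{H^1}+\norm{\theta}^2_{H^1}) \le c\big(\norm{f_{,x_3}}^2_{L_{6/5}}+\norm{g_{,x_3}}^2_{L_{6/5}}\big) + c\,\norm{h}^2_{L_\infty(t_0,t;L_3)}\norm{v}^2_{H^1}$; integrating in time over $(t_0,t)$, using the definition of the $V_2^0(\Omega^t)$ norm and Lemma \ref{l4} to replace the time integral of $\norm{v}^2_{H^1}$ by $E_{v,\omega}(t)^2$, and finally taking square roots, yields the stated inequality with the product structure $E_{v,\omega}(t)\norm{h}_{L_\infty(t_0,t;L_3(\Omega))} + E_{h,\theta}(t)$.
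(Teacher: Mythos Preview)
Your overall strategy coincides with the paper's: test \eqref{p5}$_1$ and \eqref{p6}$_1$ with $h$ and $\theta$, sum, use Lemma~\ref{l1} for the viscous terms, kill the transport terms by $\Div v=0$ and $v\cdot n\vert_S=0$, and estimate the stretching terms $h\cdot\nabla v\cdot h$, $h\cdot\nabla\omega\cdot\theta$ by H\"older with exponents $(3,2,6)$ so that $\norm{h}_{L_\infty(t_0,t;L_3(\Omega))}$ is left as a multiplier. That part is fine (your final differential inequality should also carry $\norm{\nabla\omega}_{L_2}^2$ alongside $\norm{\nabla v}_{L_2}^2$, but you mention the relevant term earlier, so this is only a slip in the summary line).

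There is, however, one genuine gap: your treatment of the pressure boundary term is incorrect. You write $\int_S q\,(h\cdot n)\,\ud S=0$ because ``$h_3\vert_{S_2}=0$ (the latter from $v_3\vert_{S_2}=0$ differentiated)''. But $h_3=v_{3,x_3}$, and on $S_2$ the direction $x_3$ is \emph{normal}, so vanishing of $v_3$ on $S_2$ says nothing about $v_{3,x_3}$ there. Indeed Lemma~\ref{lem3}(c) only gives $h'\vert_{S_2}=0$ and $h_{3,x_3}\vert_{S_2}=0$, not $h_3\vert_{S_2}=0$; in general $h\cdot n\vert_{S_2}=\pm h_3\vert_{S_2}\neq 0$. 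The way the paper closes this term is different: on $S_2$ one has $q\vert_{S_2}=f_3\vert_{S_2}$ (Remark~\ref{rem3}, obtained by projecting the third component of \eqref{p1}$_1$ onto $S_2$ and using Lemma~\ref{lem3}), so
\[
\int_S q\,(h\cdot n)\,\ud S=\int_{S_2} q\,h_3\,\ud S_2=\int_{S_2} f_3\,h_3\,\ud S_2=0
\]
precisely because of the hypothesis $f_3\vert_{S_2}=0$. In your write-up this hypothesis is never invoked, which is a sign that the argument has gone astray at that point. Once you replace the incorrect claim about $h_3\vert_{S_2}$ by the use of Remark~\ref{rem3} and $f_3\vert_{S_2}=0$, the rest of your outline matches the paper's proof.
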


Note, that the conditions $g'\vert_{S_2} = 0$ and $f_3\vert_{S_2} = 0$ can be dropped, but then we would have to deal with two non-trivial boundary integrals on $S_2$. We would be even capable to estimate them in suitable norms by application of the extension and the interpolation theorems, which would result in the appearance of the term $c_I\Big(\norm{h}^2_{L_2(\Omega^t)} + \norm{\theta}^2_{L_2(\Omega^t)}\Big)$ on the right-hand side. But we should not forget that the presented lemma serves only a supporting role in the proof of global existence of regular solutions to problem \eqref{p1}. Since we have no control over the magnitude of $c_I$ we would encounter insurmountable difficulties in further considerations (see the proof of Lemma \ref{lem6}). 

\begin{proof}
	Multiplying \eqref{p5}$_1$ and \eqref{p6}$_1$ by $h$ and $\theta$ respectively and integrating over $\Omega$ yields
	\begin{multline}\label{eq11}
		\frac{1}{2}\Dt \int_{\Omega}h^2\, \ud x - (\nu + \nu_r)\int_{\Omega}\triangle h \cdot h\, \ud x = \\
		-\int_{\Omega} \nabla q\cdot h\, \ud x  - \int_{\Omega} v \cdot \nabla h \cdot h\, \ud x - \int_{\Omega} h \cdot \nabla v \cdot h\, \ud x + 2\nu_r\int_{\Omega}\Rot \theta \cdot h\, \ud x + \int_{\Omega} f_{,x_3} \cdot h\, \ud x
	\end{multline}
	and
	\begin{multline}\label{eq13}
		\frac{1}{2}\Dt \int_{\Omega}\theta^2\, \ud x - \alpha \int_{\Omega} \triangle \theta \cdot \theta\, \ud x - \beta \int_{\Omega}\nabla \Div \theta \cdot \theta\, \ud x + 4\nu_r \int_{\Omega}\theta^2\, \ud x \\
		= - \int_{\Omega} h\cdot \nabla \omega \cdot \theta - \int_{\Omega} v \cdot \nabla \theta \cdot \theta\, \ud x + 2 \nu_r \int_{\Omega} \Rot h \cdot \theta\, \ud x + \int_{\Omega} g_{,x_3} \cdot \theta\, \ud x.
	\end{multline}
	Consider first the terms containing the Laplace operator. From Lemma \ref{l1} it follows that
	\begin{equation*}
		-\int_{\Omega} \triangle h \cdot h\, \ud x = \int_{\Omega} \Rot\Rot h \cdot h\, \ud x = \int_{\Omega} \abs{\Rot h}^2\, \ud x + \int_S \Rot h \times n \cdot h\, \ud S = \int_{\Omega} \abs{\Rot h}^2\, \ud x, \\
	\end{equation*}
	because $\Rot h \times n\vert_{S_1} = 0$ and on $S_2$ the equality $(\Rot h \times n)_3 = 0$ holds. Again, from Lemma \ref{l1} we have
	\begin{multline*}
		-\int_{\Omega} \triangle \theta\cdot \theta\, \ud x = \int_{\Omega} \Rot\Rot \theta \cdot \theta\, \ud x - \int_{\Omega} \nabla\Div \theta\cdot \theta\,\ud x \\
		= \int_{\Omega} \abs{\Rot \theta}^2\, \ud x + \int_S \Rot \theta\times n \cdot \theta\, \ud S + \int_{\Omega} \abs{\Div \theta}^2\, \ud x - \int_S (\theta \cdot n)\Div \theta\, \ud S
	\end{multline*}
	and
	\begin{equation*}
		\int_{\Omega} \Rot \theta \cdot h\, \ud x = \int_{\Omega} \Rot h \cdot \theta + \int_S \theta\times n \cdot h\, \ud S = \int_{\Omega} \Rot h\cdot \theta\, \ud x
	\end{equation*}
	because of the boundary conditions \eqref{p2}$_{3,4}$. 
	For the nonlinear terms the equalities 
	\begin{align*}
		\int_{\Omega} v\cdot \nabla h\cdot h\, \ud x &= -\frac{1}{2}\int_{\Omega} \Div v \abs{h}^2\, \ud x + \frac{1}{2}\int_S \abs{h}^2 v\cdot n\, \ud S = 0, \\
		\int_{\Omega} v\cdot \nabla \theta\cdot \theta\, \ud x &= -\frac{1}{2}\int_{\Omega} \Div v \abs{\theta}^2\, \ud x + \frac{1}{2}\int_S \abs{\theta}^2 v\cdot n\, \ud S = 0
	\end{align*}
	hold because of \eqref{p2}$_1$. Finally, integration by parts yields
	\begin{align*}
		-\int_{\Omega} \nabla \Div \theta \cdot \theta\, \ud x &= \int_{\Omega} \abs{\Div \theta}^2\, \ud x - \int_S \Div \theta (\theta \cdot n)\, \ud S = \int_{\Omega} \abs{\Div \theta}^2\, \ud x, \\
		\int_{\Omega} \nabla q \cdot h\, \ud x &= - \int_{\Omega} q \Div h\, \ud x + \int_S q (h \cdot n)\, \ud S = \int_{S_2} f_3 h_3\, \ud S = 0,
	\end{align*}
	where to justify the last equality we use Remark \ref{rem3}. Now, adding both sides in \eqref{eq11} and \eqref{eq13} and taking into account the above integration we obtain
	\begin{multline*}
		\frac{1}{2}\Dt \left(\int_{\Omega}h^2\, \ud x + \int_{\Omega}\theta^2\, \ud x\right) + \left(\nu + \nu_r\right)\int_{\Omega} \abs{\Rot h}^2\, \ud x + \alpha \int_{\Omega} \abs{\Rot \theta}^2\, \ud x + \left(\alpha + \beta\right) \int_{\Omega}\abs{\Div \theta}^2\, \ud x \\
+ 4\nu_r \int_{\Omega}\theta^2\, \ud x =  - \int_{\Omega} h\cdot \nabla v \cdot h\, \ud x - \int_{\Omega} h\cdot \nabla \omega \cdot \theta  + 4 \nu_r \int_{\Omega} \Rot h \cdot \theta\, \ud x \\
	+ \int_{\Omega} f_{,x_3} \cdot h\, \ud x + \int_{\Omega} g_{,x_3} \cdot \theta\, \ud x =: \sum_{k = 1}^5 I_k.
	\end{multline*}
	We estimate every term on the right-hand side by the means of the H\"older and the Young inequa\-lities:
	\begin{align*}
		I_1 &\leq \norm{h}_{L_6}\norm{\nabla v}_{L_2(\Omega)}\norm{h}_{L_3(\Omega)} \leq \epsilon_1 \norm{h}^2_{L_6(\Omega)} + \frac{1}{4\epsilon_1} \norm{\nabla v}_{L_2(\Omega)}^2\norm{h}^2_{L_3(\Omega)}, \\
		I_2 &\leq \norm{h}_{L_3(\Omega)}\norm{\nabla \omega}_{L_2(\Omega)}\norm{\theta}_{L_6(\Omega)} \leq \epsilon_2 \norm{\theta}_{L_6(\Omega)}^2 + \frac{1}{4\epsilon_2}\norm{\nabla \omega}_{L_2(\Omega)}^2\norm{h}_{L_3(\Omega)}^2, \\
		I_3 &\leq 4\nu_r \norm{\Rot h}_{L_2(\Omega)}\norm{\theta}_{L_2(\Omega)} \leq 4\nu_r\epsilon_3 \norm{\Rot h}_{L_2(\Omega)}^2 + \frac{\nu_r}{\epsilon_3}\norm{\theta}_{L_2(\Omega)}^2, \\
		I_4 &\leq \norm{f_{,x_3}}_{L_{\frac{6}{5}}(\Omega)}\norm{h}_{L_6(\Omega)} \leq \epsilon_5 \norm{h}_{L_6(\Omega)}^2 + \frac{1}{4\epsilon_5} \norm{f_{,x_3}}_{L_{\frac{6}{5}}(\Omega)}^2, \\
		I_5 &\leq \norm{g_{,x_3}}_{L_{\frac{6}{5}}(\Omega)}\norm{\theta}_{L_6(\Omega)} \leq \epsilon_6 \norm{\theta}_{L_6(\Omega)}^2 + \frac{1}{4\epsilon_6} \norm{g_{,x_3}}_{L_{\frac{6}{5}}(\Omega)}^2.
	\end{align*}
	Now we set $\epsilon_3 = \frac{1}{4}$. Then we estimate $\nu\norm{\Rot h}_{L_2(\Omega)}^2$ from below by $\frac{\nu}{c_{\Omega}}\norm{h}_{H^1(\Omega)}^2$ (see Remark \ref{rem7}) and $\alpha\big(\norm{\Rot \theta}_{L_2(\Omega)}^2 + \norm{\Div \theta}_{L_2(\Omega)}^2\big)$ from below by $\frac{\alpha}{c_{\Omega}}\norm{\theta}^2_{H^1(\Omega)}$ (see Lemma \ref{l2}, $\theta\cdot n\vert_S = 0$). Next we set 
	\begin{equation*}
		\epsilon_1 c_I = \epsilon_5 c_I = \frac{\nu}{4c_{\Omega}}, \qquad \epsilon_2 c_I = \epsilon_6 c_I = \frac{\alpha}{4c_{\Omega}}.
	\end{equation*}
	Summarizing,
	\begin{multline}\label{eq16}
		\frac{1}{2}\Dt \left(\int_{\Omega}h^2\, \ud x + \int_{\Omega}\theta^2\, \ud x\right) + \frac{\nu}{2c_{\Omega}}\norm{h}^2_{H^1(\Omega)} + \frac{\alpha}{2c_{\Omega}} \norm{\theta}^2_{H^1(\Omega)} \\
		\leq \frac{c_{I,\Omega}}{\nu}\norm{\nabla v}^2_{L_2(\Omega)}\norm{h}^2_{L_3(\Omega)} + \frac{c_{I,\Omega}}{\alpha}\norm{\nabla \omega}_{L_2(\Omega)}^2\norm{h}_{L_3(\Omega)}^2 	+ \frac{c_{I,\Omega}}{\nu}\norm{f_{,x_3}}^2_{L_{\frac{6}{5}}(\Omega)} + \frac{c_{I,\Omega}}{\alpha}\norm{g_{,x_3}}^2_{L_{\frac{6}{5}}(\Omega)}.
	\end{multline}
	Multiplying by $2$ and integrating with respect to $t \in (t_0,t_1)$ yields
	\begin{multline*}
		\norm{h(t)}^2_{L_2(\Omega)} + \norm{\theta(t)}^2_{L_2(\Omega)} + \frac{\nu}{c_{\Omega}}\int_{t_0}^t\norm{h(s)}^2_{H^1(\Omega)}\, \ud s + \frac{\alpha}{c_{\Omega}} \int_{t_0}^t\norm{\theta(s)}^2_{H^1(\Omega)}\, \ud s \\
		\leq \frac{c_{I,\Omega}}{\nu}\int_{t_0}^t\norm{\nabla v(s)}^2_{L_2(\Omega)}\norm{h(s)}^2_{L_3(\Omega)}\, \ud s + \frac{c_{I,\Omega}}{\alpha}\int_{t_0}^t\norm{\nabla \omega(s)}_{L_2(\Omega)}^2\norm{h(s)}_{L_3(\Omega)}^2\, \ud s \\
		+ \frac{c_{I,\Omega}}{\nu}\norm{f_{,x_3}}^2_{L_2(t_0,t;L_{\frac{6}{5}}(\Omega))} + \frac{c_{I,\Omega}}{\alpha}\norm{g_{,x_3}}^2_{L_2(t_0,t;L_{\frac{6}{5}}(\Omega))} + \norm{h(t_0)}^2_{L_2(\Omega)} + \norm{\theta(t_0)}^2_{L_2(\Omega)}.
	\end{multline*}
	Next we estimate the left-hand side from below by
	\begin{equation*}
		\min\left\{\frac{\nu}{c_{\Omega}}, \frac{\alpha}{c_{\Omega}},1\right\} \left(\norm{h}^2_{V_2^0(\Omega^t)} + \norm{\theta}^2_{V_2^0(\Omega^t)}\right).
	\end{equation*}
	In view of Lemma \ref{l4} we get that
	\begin{multline*}
		\min\left\{\frac{\nu}{c_{\Omega}}, \frac{\alpha}{c_{\Omega}},1\right\} \left(\norm{h}^2_{V_2^0(\Omega^t)} + \norm{\theta}^2_{V_2^0(\Omega^t)}\right) \\
		\leq \frac{2c_{I,\Omega}}{\min\{\alpha,\nu\}} \left(E^2_{v,\omega}(t) \norm{h}^2_{L_{\infty}(t_0,t;L_3(\Omega))} + \norm{f_{,x_3}}^2_{L_2(t_0,t;L_{\frac{6}{5}}(\Omega))} + \norm{g_{,x_3}}^2_{L_2(t_0,t;L_{\frac{6}{5}}(\Omega))}\right) \\
		+ \norm{h(t_0)}^2_{L_2(\Omega)} + \norm{\theta(t_0)}^2_{L_2(\Omega)},
	\end{multline*}
	which concludes the proof.
\end{proof}

\begin{rem}\label{rem4}
	In the above Lemma we used Lemma \ref{l2}, which implied that $\frac{\alpha}{c_{\Omega}}\norm{\theta}_{H^1(\Omega)}^2 \leq \alpha (\norm{\Rot \theta}_{L_2(\Omega)}^2 + \norm{\Div \theta}_{L_2(\Omega)}^2)$. In further considerations we need slightly different estimate. Since
	\begin{equation*}
		\alpha \norm{\Rot \theta}^2_{L_2(\Omega)} + (\alpha + \beta)\norm{\Div \theta}^2_{L_2(\Omega)} \geq \frac{\alpha}{2}\left(\norm{\Rot \theta}^2_{L_2(\Omega)} + \frac{\alpha}{2}\norm{\Rot \theta}^2_{L_2(\Omega)}\right) + \frac{\alpha}{2}\norm{\Rot \theta}^2_{L_2(\Omega)}
	\end{equation*}
	we infer from Lemma \ref{l2} that
	\begin{equation*}
		\alpha \norm{\Rot \theta}^2_{L_2(\Omega)} + (\alpha + \beta)\norm{\Div \theta}^2_{L_2(\Omega)} \geq \frac{\alpha}{2c_{\Omega}}\norm{\theta}_{H^1(\Omega)}^2 + \frac{\alpha}{2}\norm{\Rot \theta}^2_{L_2(\Omega)}.
	\end{equation*}
	By the Poincar\'e inequality (see Remark \ref{rem1}) and by the interpolation inequality we obtain
	\begin{equation*}
		\norm{h}_{L_3(\Omega)} \leq \norm{h}_{2}^{\frac{1}{2}}\norm{h}_{L_6(\Omega)}^{\frac{1}{2}} \leq c_{I,P} \norm{h}_{H^1(\Omega)}.
	\end{equation*}
	Thus, \eqref{eq16} can be written in a following form
	\begin{multline*}
		\frac{1}{2}\Dt \left(\int_{\Omega}h^2\, \ud x + \int_{\Omega}\theta^2\, \ud x\right) + \frac{\nu}{2c_{\Omega}}\norm{h}^2_{H^1(\Omega)} + \frac{\alpha}{2} \norm{\Rot \theta}^2_{L_2(\Omega)} \\
		\leq \frac{c_{I,P,\Omega}}{\nu}\norm{h}^2_{H^1(\Omega)}\left(\norm{\nabla v}^2_{L_2(\Omega)} + \norm{\nabla \omega}_{L_2(\Omega)}^2\right) + \frac{c_{I,\Omega}}{\nu}\norm{f_{,x_3}}^2_{L_{\frac{6}{5}}(\Omega)} + \frac{c_{I,\Omega}}{\alpha}\norm{g_{,x_3}}^2_{L_{\frac{6}{5}}(\Omega)}.
	\end{multline*}
	Utilizing Remark \ref{rem7} and multiplying the above inequality by $\frac{2\nu_r}{\alpha}$ leads to
	\begin{multline}\label{eq17}
		\frac{\nu_r}{\alpha}\Dt \left(\int_{\Omega}h^2\, \ud x + \int_{\Omega}\theta^2\, \ud x\right) + \nu_r\norm{\Rot \theta}^2_{L_2(\Omega)} \\
		\leq c_{\alpha,\nu,\nu_r,I,P,\Omega}\norm{\Rot h}^2_{L_2(\Omega)}\left(\norm{\nabla v}^2_{L_2(\Omega)} + \norm{\nabla \omega}_{L_2(\Omega)}^2\right) \\
		+ c_{\alpha,\nu,\nu_r,I,\Omega}\left(\norm{f_{,x_3}}^2_{L_{\frac{6}{5}}(\Omega)} + \norm{g_{,x_3}}^2_{L_{\frac{6}{5}}(\Omega)}\right).
	\end{multline}
	For now the above inequality seems to be useless, but later we shall see that it is necessary to obtain an estimate for $h$ in $V_2^1(\Omega)$ (see Lemma \ref{lem6}). 
\end{rem}

Finally we present the last Lemma of this Section. 

\begin{lem}\label{lem9}
	Let $E_{v,\omega}(t) < \infty$. Suppose that $\norm{h}_{L_\infty(t_0,t;L_3(\Omega))} < \infty$. Assume that $f' = (f_1,f_2) \in L_2(\Omega^t)$ and $v(t_0) \in H^1(\Omega)$. Then
	\begin{equation*}
		\norm{\chi}_{V_2^0(\Omega^t)} \leq c_{\alpha,\nu,\nu_r,I,P,\Omega} E_{v,\omega}(t)\norm{h}_{L_\infty(t_0,t;L_3(\Omega))} + c_{\nu,\nu_r}\norm{f'}_{L_2(\Omega^t)} + c_{\alpha,\nu,\nu_r,I,\Omega} E_{v,\omega}(t) + \norm{v(t_0)}_{H^1(\Omega)}.
	\end{equation*}
\end{lem}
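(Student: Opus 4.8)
The plan is to run the standard energy method on problem \eqref{p9}: multiply \eqref{p9}$_1$ by $\chi$ and integrate over $\Omega$. The observation that organizes the whole argument is that $\chi = v_{2,x_1} - v_{1,x_2}$ is the third component of $\Rot v$, so that $\norm{\chi}_{L_2(\Omega)} \leq \norm{\nabla v}_{L_2(\Omega)} \leq \norm{v}_{H^1(\Omega)}$ is already controlled a priori through the basic energy estimate of Lemma \ref{l4}. This is exactly what will let us avoid a Gronwall argument and keep the constant independent of $t$.

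First I would dispose of the transport term: integration by parts together with $\Div v = 0$ and $v\cdot n\vert_S = 0$ gives $\int_\Omega (v\cdot\nabla\chi)\chi\,\ud x = 0$. For the dissipative term, integration by parts yields $-(\nu+\nu_r)\int_\Omega \triangle\chi\,\chi\,\ud x = (\nu+\nu_r)\norm{\nabla\chi}^2_{L_2(\Omega)}$, the boundary contribution vanishing because $\chi\vert_{S_1} = 0$ while on $S_2$ the normal is $(0,0,\pm1)$ and $\chi_{,x_3}\vert_{S_2}=0$. Since $\chi\vert_{S_1}=0$, the Poincar\'e inequality holds for $\chi$, so this term dominates $\tfrac{\nu+\nu_r}{c_P}\norm{\chi}^2_{H^1(\Omega)}$ and in particular controls $\norm{\chi}^2_{L_6(\Omega)}$ via the embedding $H^1\hookrightarrow L_6$.

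Next I would treat the right-hand side. The two forcing contributions $\int_\Omega F_3\chi$ and $2\nu_r\int_\Omega\big((\Rot\omega)_{2,x_1}-(\Rot\omega)_{1,x_2}\big)\chi$ should be integrated by parts to move the $x_1,x_2$-derivatives onto $\chi$; recalling that $F_3 = f_{2,x_1}-f_{1,x_2}$ is the third component of $\Rot f$, all boundary terms drop (again because $\chi\vert_{S_1}=0$ and because $n_1=n_2=0$ on $S_2$), leaving $\int_\Omega(f_2\chi_{,x_1}-f_1\chi_{,x_2})$ and $2\nu_r\int_\Omega((\Rot\omega)_2\chi_{,x_1}-(\Rot\omega)_1\chi_{,x_2})$, which by H\"older and Young are bounded by $\epsilon\norm{\nabla\chi}^2_{L_2}+c\,\epsilon^{-1}(\norm{f'}^2_{L_2}+\norm{\Rot\omega}^2_{L_2})$. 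The genuinely nonlinear terms $\int_\Omega h_3\chi^2$ and $\int_\Omega(h_2v_{3,x_1}-h_1v_{3,x_2})\chi$ are estimated by H\"older with exponents $3,2,6$ so that the top-order factor is $\norm{\chi}_{L_6}$ (absorbed by the dissipation) and the remaining factors are $\norm{h}_{L_3}$ times either $\norm{\chi}_{L_2}$ or $\norm{\nabla v}_{L_2}$; Young's inequality then produces $\epsilon\norm{\chi}^2_{L_6}+c\,\epsilon^{-1}\norm{h}^2_{L_3}(\norm{\chi}^2_{L_2}+\norm{\nabla v}^2_{L_2})$.

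Finally I would fix $\epsilon$ small enough to absorb all $\norm{\nabla\chi}^2_{L_2}$ and $\norm{\chi}^2_{L_6}$ into the left-hand side, integrate in $t$ over $(t_0,t)$, and bound $\norm{h}^2_{L_3}$ uniformly by $\norm{h}^2_{L_\infty(t_0,t;L_3(\Omega))}$. Then $\int_{t_0}^t\norm{\nabla v}^2_{L_2}$, $\int_{t_0}^t\norm{\Rot\omega}^2_{L_2}$ and $\int_{t_0}^t\norm{\chi}^2_{L_2}$ are all majorized by $\norm{v}^2_{V_2^0}+\norm{\omega}^2_{V_2^0}\leq c\,E^2_{v,\omega}(t)$ through Lemma \ref{l4} (for the last using $\norm{\chi}_{L_2}\leq\norm{v}_{H^1}$), while the initial term is $\norm{\chi(t_0)}_{L_2}\leq\norm{v(t_0)}_{H^1}$. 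Collecting the $\esssup$ of $\norm{\chi(t)}_{L_2}^2$ with the integral of $\norm{\nabla\chi}^2_{L_2}$ into $\norm{\chi}^2_{V_2^0(\Omega^t)}$ and taking square roots gives the asserted bound. The main obstacle is precisely the quadratic-in-$\chi$ term $\int_\Omega h_3\chi^2$: a naive Young estimate leaves $\norm{h}^2_{L_3}\norm{\chi}^2_{L_2}$ and hence an exponential Gronwall factor in $t$; the remedy is to use $\chi=(\Rot v)_3$ so that $\int_{t_0}^t\norm{\chi}^2_{L_2}$ is controlled a priori by $E_{v,\omega}(t)$, keeping the constant independent of $t$.
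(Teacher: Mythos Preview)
Your proposal is correct and follows essentially the same approach as the paper's proof: test \eqref{p9}$_1$ with $\chi$, integrate by parts on the forcing and $\Rot\omega$ terms, estimate the $h$-terms via H\"older with exponents $3,2,6$, absorb $\norm{\chi}_{L_6}$ and $\norm{\nabla\chi}_{L_2}$ into the dissipation, and then use Lemma~\ref{l4} (together with $\norm{\chi}_{L_2}\leq\norm{\nabla v}_{L_2}$) to close without Gronwall. The only cosmetic slip is a sign in the $F_3$ integration by parts, which is immaterial for the estimate.
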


\begin{proof}
	Multiplying \eqref{p9}$_1$ by $\chi$ and integrating over $\Omega$ yields
	\begin{multline*}
		\frac{1}{2}\Dt \int_{\Omega}\chi^2\, \ud x + (\nu + \nu_r)\int_{\Omega}\abs{\nabla \chi}^2\, \ud x = \int_{\Omega} h_3\chi\chi\, \ud x  -\int_{\Omega} h_2v_{3,x_1}\chi\, \ud x + \int_{\Omega} h_1v_{3,x_2}\chi\, \ud x  \\
		+ \int_{\Omega} F_3\chi\, \ud x + 2\nu_r\int_{\Omega}\left((\Rot \omega)_{2,x_1} - (\Rot\omega)_{1,x_2}\right)\chi\, \ud x.
	\end{multline*}
	The first three integrals on the right-hand side we estimate in the same way by the means of the H\"older and the Young inequalities:
	\begin{equation*}
		\int_{\Omega} h_3\chi\chi\, \ud x \leq \norm{h_3}_{L_3(\Omega)}\norm{\chi}_{L_2(\Omega)}\norm{\chi}_{L_6(\Omega)} \leq \epsilon_1\norm{\chi}^2_{L_6(\Omega)} + \frac{1}{4\epsilon_1}\norm{h_3}^2_{L_3(\Omega)}\norm{\chi}_{L_2(\Omega)}^2,
	\end{equation*}
	\begin{equation*}
		\int_{\Omega} h_2v_{3,x_1}\chi\, \ud x \leq \norm{h_2}_{L_3(\Omega)}\norm{v_{3,x_1}}_{L_2(\Omega)}\norm{\chi}_{L_6(\Omega)} \leq \epsilon_2\norm{\chi}^2_{L_6(\Omega)} + \frac{1}{4\epsilon_2}\norm{h_2}^2_{L_3(\Omega)}\norm{v_{3,x_1}}_{L_2(\Omega)}^2,
	\end{equation*}
	and
	\begin{equation*}
		\int_{\Omega} h_1v_{3,x_2}\chi\, \ud x \leq \norm{h_1}_{L_3(\Omega)}\norm{v_{3,x_2}}_{L_2(\Omega)}\norm{\chi}_{L_6(\Omega)} \leq \epsilon_3\norm{\chi}^2_{L_6(\Omega)} + \frac{1}{4\epsilon_3}\norm{h_1}^2_{L_3(\Omega)}\norm{v_{3,x_2}}_{L_2(\Omega)}^2.
	\end{equation*}
	In the last two integrals we first integrate by parts and then use the H\"older and the Young inequalities. Note that the boundary integrals are equal to zero due to the boundary conditions for $\chi$. 
	\begin{multline*}
		\int_{\Omega}F_3\chi\, \ud x = - \int_{\Omega} f_2\chi_{,x_1} - f_1\chi_{,x_2}\, \ud x + \int_S f_2\chi n_1 - f_1\chi n_2\,\ud S \\
		\leq \norm{f'}_{L_2(\Omega)}\norm{\nabla '\chi}_{L_2(\Omega)} \leq \epsilon_4 \norm{\nabla \chi}^2_{L_2(\Omega)} + \frac{1}{4\epsilon_4}\norm{f'}_{L_2(\Omega)}^2
	\end{multline*}
	and
	\begin{multline*}
		\int_{\Omega}\left((\Rot \omega)_{2,x_1} - (\Rot\omega)_{1,x_2}\right)\chi\, \ud x = - \int_{\Omega} (\Rot \omega)_2\chi_{,x_1} - (\Rot \omega)_1\chi_{,x_2}\, \ud x \\
		+ \int_S(\Rot \omega)_2\chi n_1 - (\Rot\omega)_1\chi n_2\, \ud S\\
		\leq \norm{(\Rot \omega)'}_{L_2(\Omega)}\norm{\nabla' \chi}_{L_2(\Omega)} \leq \epsilon_5 \norm{\nabla \chi}_{L_2(\Omega)}^2 + \frac{1}{4\epsilon_5}\norm{(\Rot \omega)'}_{L_2(\Omega)}^2.
	\end{multline*}
	Since $\chi\vert_{S_1} = 0$, we can use the Poincar\'e inequality $\norm{\chi}_{L_6(\Omega)} \leq c_I\norm{\chi}_{H^1(\Omega)} \leq c_{I,P} \norm{\nabla \chi}_{L_2(\Omega)}$. Hence, we put $\epsilon_1 = \epsilon_2 = \epsilon_3 = \frac{\nu + \nu_r}{12c_{I,P}^2}$ and $\epsilon_4 = 2\nu_r \epsilon_5 = \frac{\nu + \nu_r}{8}$. Then we see that
	\begin{multline*}
		\frac{1}{2}\Dt \int_{\Omega}\chi^2\, \ud x + \frac{\nu + \nu_r}{2}\int_{\Omega}\abs{\nabla \chi}^2\, \ud x \\
		\leq \frac{3c_{I,P}^2}{\nu + \nu_r}\left(\norm{h_3}^2_{L_3(\Omega)}\norm{\chi}_{L_2(\Omega)}^2 + \norm{h_2}^2_{L_3(\Omega)}\norm{v_{3,x_1}}_{L_2(\Omega)}^2 + \norm{h_1}^2_{L_3(\Omega)}\norm{v_{3,x_2}}_{L_2(\Omega)}^2\right) \\
		+ \frac{2}{\nu + \nu_r}\norm{f'}_{L_2(\Omega)}^2 + \frac{4\nu_r}{\nu + \nu_r}\norm{(\Rot \omega)'}_{L_2(\Omega)}^2.
	\end{multline*}
	Next we multiply by $2$, integrate with respect to $t \in (t_0,t_1)$ and use the energy estimate (Lemma \ref{l4}). It gives
	\begin{multline*}
		\min\left\{1,\nu + \nu_r\right\} \norm{\chi}_{V^0_2(\Omega^t)}^2 \\
		\leq \frac{6c_{I,P}^2\norm{h}_{L_\infty(t_0,t;L_3(\Omega))}^2}{\nu + \nu_r}\left(\norm{\chi}_{L_2(\Omega^t)}^2 + \norm{v_{3,x_1}}_{L_2(\Omega^t)}^2 + \norm{v_{3,x_2}}_{L_2(\Omega^t)}^2\right) \\
		+ \frac{4}{\nu + \nu_r}\norm{f'}_{L_2(\Omega^t)}^2 + \frac{8\nu_r}{\nu + \nu_r}\norm{(\Rot \omega)'}_{L_2(\Omega^2)}^2 + \norm{\chi(t_0)}^2_{L_2(\Omega)}.
	\end{multline*}
	To complete the proof we observe that $\norm{\chi(t_0)}_{L_2(\Omega)} \leq \norm{v(t_0)}_{H^1(\Omega)}$.
\end{proof}

\section{Higher order estimates}\label{sec3.2}

In this Section we confine our attention to find estimates for $v$ and $\omega$ in the norm of the space $W^{2,1}_2(\Omega^t)$ in terms of data only. Of course we cannot expect such a result without an additional smallness assumption. Therefore we shall make use of $\delta(t)$ which was introduced \eqref{eq26}. We emphasize that the smallness condition does not involve $L_2$-norms of the initial velocity and the initial microrotation fields. Below we briefly outline the reason behind that. 

The central aim is to estimate the nonlinear terms $v\cdot \nabla v$ and $v\cdot \nabla \omega$ in $L_2(\Omega^t)$. To accomplish it we first consider the problem for $h$ and seek for estimate of its solution in $V^1_2(\Omega^t)$. The estimate has a form 
\begin{equation}\label{eq53}
	\norm{h}_{V_2^1(\Omega^t)} \leq c\left(\exp\left(\norm{\nabla v}^2_{L_2(t_0,t;L_6(\Omega))}\right) + 1\right) \cdot \delta(t) 
\end{equation}
(see Lemma \ref{lem6}). Its direct consequence is 
\begin{equation*}
	\norm{h}_{L_\infty(t_0,t;H^1(\Omega^t))} \leq c\left(\exp\left(\norm{\nabla v}^2_{L_2(t_0,t;L_6(\Omega))}\right) + 1\right) \cdot \delta(t),
\end{equation*}
which we use to bound $\norm{h}_{L_\infty(t_0,t;L_3(\Omega))}$ by the means of the interpolation between $L_p$ spaces and the Poincar\'e inequality. The reason becomes apparent if we take into account that the inequalities for $\norm{h}_{V^0_2(\Omega^t)}$ and $\norm{\chi}_{V^0_2(\Omega^t)}$ (see Lemmas \ref{lem8} and \ref{lem9}) are dependent on that norm. Therefore we can write
\begin{equation*}
	\norm{h}_{V^0_2(\Omega^t)} + \norm{\chi}_{V^0_2(\Omega^t)} \leq c\left(\exp\left(\norm{\nabla v}^2_{L_2(t_0,t;L_6(\Omega))}\right) + 1\right) \cdot \delta(t) + \text{data}.
\end{equation*}
Applying the above inequality and \eqref{eq53} and using the structure of the domain, which is a Cartesian product of two sets, we are able to estimate $\nabla v$ in the norm $V^0_2(\Omega^t)$ (see Lemma \ref{lem13}). Since the estimate has the form 
\begin{equation*}
	\norm{\nabla v}_{V^0_2(\Omega^t)} \leq c\left(\exp\left(\norm{\nabla v}^2_{L_2(t_0,t;L_6(\Omega))}\right) + 1\right) \cdot \delta(t) + \text{data}
\end{equation*}
we finally write
\begin{equation*}
	\norm{\nabla v}_{V^0_2(\Omega^t)} \leq \text{data}
\end{equation*}
if only $\delta(t)$ is small enough. The above inequality in connection with embedding theorem guarantees that the nonlinear terms can be estimated in the $L_2$-norm with respect to $\Omega^t$, thereby providing estimates for $v$ and $\omega$ in $W^{2,1}_2(\Omega^t)$, which is demonstrated in Lemmas \ref{lem25} and \ref{lem18}.

Let us now move on to th details of the proof:
\begin{lem}\label{lem6}
    Suppose that $f_3\vert_{S_2} = 0$, $g'\vert_{S_2} = 0$ and $\nabla v \in L_2(t_0,t;L_6(\Omega))$. Let $E_{v,\omega}(t) < \infty$ and $\delta(t) < \infty$. Then
	\begin{equation*}
		\norm{h}_{V^1_2(\Omega^t)} \leq c_{\alpha,\nu,\nu_r,I,P,\Omega} \left(\exp\left(\norm{\nabla v}^2_{L_2(t_0,t;L_6(\Omega))} + E_{v,\omega}^2(t)\right) + 1\right) \cdot \delta(t).
	\end{equation*}
\end{lem}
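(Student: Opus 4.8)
The plan is to derive an energy estimate for the problem \eqref{p5} satisfied by $h$, but now in the stronger norm $V_2^1(\Omega^t)$, by testing the equation against $-\triangle h$ (or equivalently by differentiating and testing against $\Rot h$) rather than against $h$ itself as was done in Lemma \ref{lem8}. Because $h$ satisfies the slip-type conditions $\Rot h \times n\vert_{S_1} = 0$, $h\cdot n\vert_{S_1} = 0$ and $h'\vert_{S_2} = 0$, $h_{3,x_3}\vert_{S_2} = 0$, the boundary integrals arising from integration by parts (via Lemma \ref{l1}) should vanish or reduce to controllable terms, exactly as they did at the $L_2$ level. The viscous term will then produce $(\nu+\nu_r)\norm{\Rot\Rot h}^2_{L_2(\Omega)}$ on the left, which by the div-curl estimate of Lemma \ref{l2} together with $\Div h = 0$ controls $\norm{h}^2_{H^2(\Omega)}$, giving precisely the $V_2^1$-norm after integration in time.

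The main work is in estimating the right-hand side of \eqref{p5}$_1$, namely $-v\cdot\nabla h - h\cdot\nabla v + 2\nu_r\Rot\theta + f_{,x_3}$, paired against $\Rot\Rot h$ (or $\nabla h$). First I would handle the linear terms $2\nu_r\Rot\theta$ and $f_{,x_3}$ by H\"older and Young, absorbing the curl of $\theta$ using the companion estimate \eqref{eq17} from Remark \ref{rem4} (this is exactly why that inequality was recorded), and bounding $f_{,x_3}$ by the data appearing in $\delta(t)$. The genuinely nonlinear and dangerous terms are $v\cdot\nabla h$ and $h\cdot\nabla v$. The term $h\cdot\nabla v$ is estimated by placing $h$ in $L_6$, $\nabla v$ in $L_3$ or $L_6$, and the test factor in $L_2$; the term $v\cdot\nabla h$ is the one that forces the exponential, since after integration by parts and use of H\"older one is left with a factor $\norm{\nabla v}^2_{L_6(\Omega)}$ multiplying $\norm{\nabla h}^2_{L_2(\Omega)}$, i.e. a coefficient that is merely integrable in time rather than small.

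The crux is therefore the differential inequality one obtains after collecting all terms, schematically
\begin{equation*}
	\Dt y(t) + c\,\norm{h(t)}^2_{H^2(\Omega)} \leq a(t)\,y(t) + b(t),
\end{equation*}
where $y(t) = \norm{h(t)}^2_{H^1(\Omega)}$, the coefficient $a(t)$ is controlled by $\norm{\nabla v(t)}^2_{L_6(\Omega)}$ (plus lower-order contributions bounded through $E_{v,\omega}$ via Lemma \ref{l4}), and $b(t)$ is built from $f_{,x_3}$, $g_{,x_3}$ and the curl-$\theta$ terms that feed into $\delta(t)$. Applying the Gronwall inequality yields a factor $\exp\!\big(\int_{t_0}^t a(s)\,\ud s\big)$, which is exactly $\exp\!\big(\norm{\nabla v}^2_{L_2(t_0,t;L_6(\Omega))} + E_{v,\omega}^2(t)\big)$, while the initial contribution $y(t_0) = \norm{h(t_0)}^2_{H^1(\Omega)}$ is absorbed into $\delta(t)$ by using $\norm{h(t_0)}_{H^1(\Omega)} \le c_\Omega\norm{\Rot h(t_0)}_{L_2(\Omega)}$ from Remark \ref{rem7} (recalling that $\norm{\Rot h(t_0)}^2_{L_2(\Omega)}$ and $\norm{h(t_0)}^2_{L_2(\Omega)}$ are both present in $\delta(t)$). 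Integrating the surviving $\norm{h}^2_{H^2(\Omega)}$ term in time and taking the supremum of $y(t)$ then assembles the full $V_2^1$-norm on the left, yielding the stated bound. The hardest point to get right will be verifying that every boundary integral produced by integrating the higher-order terms by parts genuinely vanishes under \eqref{p5}$_{3,4}$ and does not leave an uncontrolled curvature contribution on $S_1$; this is the analogue, at the level of second derivatives, of the boundary bookkeeping already carried out in Lemmas \ref{lem3} and \ref{lem8}.
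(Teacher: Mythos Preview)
Your proposal is correct and follows essentially the same route as the paper: multiply \eqref{p5}$_1$ by $-\triangle h$, use $\Div h = 0$ to rewrite the time derivative as $\tfrac{1}{2}\Dt\norm{\Rot h}^2_{L_2(\Omega)}$ with vanishing boundary terms, add the auxiliary inequality \eqref{eq17} to absorb the $\nu_r\norm{\Rot\theta}^2_{L_2(\Omega)}$ contribution, and then apply Gronwall with coefficient $\norm{\nabla v}^2_{L_6(\Omega)} + \norm{\nabla v}^2_{L_2(\Omega)} + \norm{\nabla\omega}^2_{L_2(\Omega)}$ to the quantity $\norm{\Rot h(t)}^2_{L_2(\Omega)}$ (your $y(t) = \norm{h(t)}^2_{H^1(\Omega)}$ is equivalent via Remark~\ref{rem7}). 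The only detail you leave implicit and should verify explicitly is that the pressure term $\int_\Omega \nabla q\cdot\triangle h\,\ud x$ vanishes: the paper handles this by writing $\triangle h = -\Rot\Rot h$, integrating by parts to move the curl onto $\nabla q$, and then killing the boundary term on $S_2$ using $q\vert_{S_2} = f_3\vert_{S_2} = 0$ from Remark~\ref{rem3} together with the hypothesis $f_3\vert_{S_2} = 0$---this is exactly where that assumption enters.
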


In the proof of the above Lemma, we use 
\begin{equation}\label{eq43}
	\norm{h}_{H^2(\Omega)} \leq c_{\Omega} \norm{\triangle h}_{L_2(\Omega)},
\end{equation}
which we prove in analogous manner as in Remark \ref{rem7}.

\begin{proof}[Proof of Lemma \ref{lem6}]
	We multiply \eqref{p5}$_1$ by $- \triangle h$ and integrate over $\Omega$, which yields
	\begin{multline}\label{eq9}
		- \int_{\Omega} h_{,t}\cdot \triangle h\, \ud x + (\nu + \nu_r)\int_{\Omega} \abs{\triangle h}^2\, \ud x - \int_{\Omega} \nabla q \cdot \triangle h\, \ud x \\
		= \int_{\Omega} v \cdot \nabla h \cdot \triangle h\, \ud x + \int_{\Omega} h \cdot \nabla v \cdot \triangle h\, \ud x - 2\nu_r \int_{\Omega} \Rot \theta \cdot \triangle h\, \ud x - \int_{\Omega} f,_{x_3}\cdot \triangle h\, \ud x.
	\end{multline}
	For the first term on the left-hand side we have
	\begin{multline*}
		- \int_{\Omega} h_{,t}\cdot \triangle h\, \ud x = \int_{\Omega} h_{,t} \cdot \Rot \Rot h\, \ud x \\
		= \frac{1}{2}\Dt \int_{\Omega}\abs{\Rot h}^2\, \ud x + \int_{S_1} h_{,t}\cdot \Rot h \times n\, \ud S_1 + \int_{S_2} h_{2,t} \left(\Rot h\right)_1 - h_{1,t} \left(\Rot h\right)_2\, \ud S_2 \\
		= \frac{1}{2}\Dt \int_{\Omega}\abs{\Rot h}^2\, \ud x
	\end{multline*}
	where the boundary integrals vanish due to the boundary conditions \eqref{p5}$_{3,4}$.

	The third term on the left-hand side in \eqref{eq9} is equal to
	\begin{multline*}
		\int_{\Omega} \nabla q\cdot \Rot\Rot h\, \ud x = \int_{\Omega} \Rot \nabla q \cdot \Rot h\, \ud x + \int_{S_1} \nabla q\cdot \Rot h \times n\, \ud S_1 \\
		+ \int_{S_2} q_{,x_2}\left(\Rot h\right)_1 - q_{,x_1}\left(\Rot h\right)_2\, \ud S_2 = 0,
	\end{multline*}
	which follows from the boundary condition \eqref{p5}$_3$, Remark \ref{rem3} and the assumption that $f_3\vert_{S_2} = 0$.

	Consider next the first term on the right-hand side in \eqref{eq9}. Since $\Div v = 0$ we may integrate by parts, which yields
	\begin{multline*}
		\int_{\Omega} v \cdot \nabla h \cdot \triangle h\, \ud x \leq \norm{\nabla v}_{L_6(\Omega)}\norm{\nabla h}_{L_3(\Omega)}\norm{\Rot h}_{L_2(\Omega)} + \int_S \left(v\cdot \nabla\right) h\cdot (\Rot h \times n)\, \ud S \\
		\leq \epsilon_1 c_I \norm{\nabla h}^2_{H^1(\Omega)} + \frac{1}{4\epsilon_1}\norm{\nabla v}^2_{L_6(\Omega)}\norm{\Rot h}^2_{L_2(\Omega)} \leq \epsilon_1 c_I \norm{h}^2_{H^2(\Omega)} + \frac{1}{4\epsilon_1}\norm{\nabla v}^2_{L_6(\Omega)}\norm{\Rot h}^2_{L_2(\Omega)} \\
		\leq \epsilon_1 c_{\Omega,I} \norm{\triangle h}^2_{L_2(\Omega)} + \frac{1}{4\epsilon_1}\norm{\nabla v}^2_{L_6(\Omega)}\norm{\Rot h}^2_{L_2(\Omega)},
	\end{multline*}
	where we used that $\norm{\nabla h}_{L_3(\Omega)} \leq \norm{\nabla h}^{\frac{1}{2}}_{L_2(\Omega)}\norm{\nabla h}^{\frac{1}{2}}_{L_6(\Omega)} \leq c_I \norm{\nabla h}_{H^1(\Omega)}$. The last inequality above is justified in light of \eqref{eq43}. 

	For the second term on the right-hand side in \eqref{eq9} we simply have
	\begin{equation*}
		\norm{h}_{L_3(\Omega)}\norm{\nabla v}_{L_6(\Omega)}\norm{\triangle h}_{L_2(\Omega)} \leq \epsilon_2 \norm{\triangle h}_{L_2(\Omega)}^2 + \frac{1}{4\epsilon_2} \norm{h}_{L_2(\Omega)}\norm{h}_{L_6(\Omega)}\norm{\nabla v}_{L_6(\Omega)}^2.
	\end{equation*}

	The third term is estimated as follows
	\begin{equation*}
		2\nu_r\int_{\Omega}\Rot \theta\cdot \triangle h\, \ud x \leq 2\nu_r\norm{\Rot \theta}_{L_2(\Omega)}\norm{\triangle h}_{L_2(\Omega)} \\
		\leq 2\nu_r\epsilon_3 \norm{\triangle h}^2_{L_2(\Omega)} + \frac{\nu_r}{2\epsilon_3} \norm{\Rot \theta}^2_{L_2(\Omega)}.
	\end{equation*}

	Finally, for the fourth term we have
	\begin{equation*}
		\int_{\Omega}f,_{x_3}\cdot \triangle h\, \ud x \leq \norm{f,_{x_3}}_{L_2(\Omega)}\norm{\triangle h}_{L_2(\Omega)} \leq \epsilon_4 \norm{\triangle h}^2_{L_2(\Omega)} + \frac{1}{4\epsilon_4} \norm{f,_{x_3}}^2_{L_2(\Omega)}.
	\end{equation*}
	
	Setting $\epsilon_1 c_{\Omega,I} = \epsilon_2 = \epsilon_4 = \frac{\nu}{6}$ and $\epsilon_3 = \frac{1}{2}$ yields
	\begin{multline}\label{eq31}
		\frac{1}{2}\Dt \int_{\Omega}\abs{\Rot h}^2\, \ud x + \frac{\nu}{2}\norm{\triangle h}^2_{L_2(\Omega)} \leq \frac{3c_{\Omega,I}}{2\nu}\norm{\nabla v}^2_{L_6(\Omega)}\norm{\Rot h}^2_{L_2(\Omega)} \\
		+ \frac{3}{2\nu} \norm{h}_{L_2(\Omega)}\norm{h}_{L_6(\Omega)}\norm{\nabla v}_{L_6(\Omega)}^2 + \nu_r \norm{\Rot \theta}^2_{L_2(\Omega)} + \frac{3}{2\nu}\norm{f_{,x_3}}^2_{L_2(\Omega)} \\
		\leq \frac{3c_{I,\Omega}}{\nu}\norm{\nabla v}^2_{L_6(\Omega)}\norm{\Rot h}^2_{L_2(\Omega)} + \nu_r \norm{\Rot \theta}^2_{L_2(\Omega)} + \frac{3}{2\nu}\norm{f_{,x_3}}^2_{L_2(\Omega)},
	\end{multline}
	where in the last inequality we used $\norm{h}_{L_2(\Omega)}\norm{h}_{L_6(\Omega)} \leq c_I\norm{h}_{H^1(\Omega)}^2$ and subsequently utilized Lemma \ref{l2}. Next we add inequality \eqref{eq17} (see Remark \ref{rem4}), which leads to
	\begin{multline*}
		\frac{1}{2}\Dt \int_{\Omega}\abs{\Rot h}^2\, \ud x + \frac{\nu_r}{\alpha}\Dt \left(\int_{\Omega}h^2\, \ud x + \int_{\Omega}\theta^2\, \ud x\right) + \nu_r\norm{\Rot \theta}^2_{L_2(\Omega)}+ \frac{\nu}{2}\norm{\triangle h}^2_{L_2(\Omega)}  \\
		\leq \frac{3c_{I,\Omega}}{\nu}\norm{\nabla v}^2_{L_6(\Omega)}\norm{\Rot h}^2_{L_2(\Omega)} + \nu_r \norm{\Rot \theta}^2_{L_2(\Omega)} + \frac{3}{2\nu}\norm{f_{,x_3}}^2_{L_2(\Omega)} \\
		+ c_{\alpha,\nu,\nu_r,I,P,\Omega}\norm{\Rot h}^2_{L_2(\Omega)}\left(\norm{\nabla v}^2_{L_2(\Omega)} + \norm{\nabla \omega}_{L_2(\Omega)}^2\right) \\
		+ c_{\alpha,\nu,\nu_r,I,\Omega}\left(\norm{f_{,x_3}}^2_{L_{\frac{6}{5}}(\Omega)} + \norm{g_{,x_3}}^2_{L_{\frac{6}{5}}(\Omega)}\right).
	\end{multline*}
	Multiplying by $2$ and integrating with respect to $t \in (t_0,t)$ yields
	\begin{multline}\label{eq18}
		\norm{\Rot h(t)}^2_{L_2(\Omega)} + \frac{2\nu_r}{\alpha}\norm{h(t)}^2_{L_2(\Omega)} + \frac{2\nu_r}{\alpha}\norm{\theta(t)}^2_{L_2(\Omega)} + \nu\norm{\triangle h}^2_{L_2(\Omega^t)}  \\
		\leq \frac{6c_{I,\Omega}}{\nu}\int_{t_0}^t\norm{\nabla v(s)}^2_{L_6(\Omega)}\norm{\Rot h(s)}^2_{L_2(\Omega)}\, \ud s + \frac{3}{\nu}\norm{f_{,x_3}}^2_{L_2(\Omega^t)} \\
		+ c_{\alpha,\nu,\nu_r,I,P,\Omega}\int_{t_0}^t\left(\norm{\Rot h(s)}^2_{L_2(\Omega)}\left(\norm{\nabla v(s)}^2_{L_2(\Omega)} + \norm{\nabla \omega(s)}_{L_2(\Omega)}^2\right)\right)\, \ud s \\
		+ c_{\alpha,\nu,\nu_r,I,\Omega}\left(\norm{f_{,x_3}}^2_{L_2(t_0,t;L_{\frac{6}{5}}(\Omega))} + \norm{g_{,x_3}}^2_{L_2(t_0,t;L_{\frac{6}{5}}(\Omega))}\right) \\
		+ \norm{\Rot h(t_0)}^2_{L_2(\Omega)} + \frac{2\nu_r}{\alpha}\norm{h(t_0)}^2_{L_2(\Omega)} + \frac{2\nu_r}{\alpha}\norm{\theta(t_0)}^2_{L_2(\Omega)}.
	\end{multline}
	From the Gronwall inequality and the energy estimates (Lemma \ref{l4}) it follows that
	\begin{multline*}
		\norm{\Rot h}^2_{L_\infty(t_0,t;L_2(\Omega))} \leq c_{\alpha,\nu,\nu_r,I,P,\Omega}\exp\left(\norm{\nabla v}^2_{L_2(t_0,t;L_6(\Omega))} + E_{v,\omega}^2(t)\right)\\
		\cdot \bigg(\norm{f_{,x_3}}^2_{L_2(\Omega^t)} + \norm{f_{,x_3}}^2_{L_2(t_0,t;L_{\frac{6}{5}}(\Omega))} + \norm{g_{,x_3}}^2_{L_2(t_0,t;L_{\frac{6}{5}}(\Omega))} \\
		+ \norm{\Rot h(t_0)}^2_{L_2(\Omega)} + \norm{h(t_0)}^2_{L_2(\Omega)} + \norm{\theta(t_0)}^2_{L_2(\Omega)}\bigg).
	\end{multline*}
	By the H\"older inequality we write
	\begin{multline*}
		\norm{\Rot h}^2_{L_\infty(t_0,t;L_2(\Omega))} \leq c_{\alpha,\nu,\nu_r,I,P,\Omega}\exp\left(\norm{\nabla v}^2_{L_2(t_0,t;L_6(\Omega))} + E_{v,\omega}^2(t)\right) \\
		\cdot \bigg(\norm{f_{,x_3}}^2_{L_2(\Omega^t)} + \norm{g_{,x_3}}^2_{L_2(\Omega^t)} + \norm{\Rot h(t_0)}^2_{L_2(\Omega)} + \norm{h(t_0)}^2_{L_2(\Omega)} + \norm{\theta(t_0)}^2_{L_2(\Omega)}\bigg).
	\end{multline*}
	Putting the above estimate in \eqref{eq18} and using the H\"older inequality yields
	\begin{multline*}
		\norm{\Rot h(t)}_{L_2(\Omega)}^2 + \nu\norm{\triangle h}^2_{L_2(\Omega^t)} \\
		\leq c_{\alpha,\nu,\nu_r,I,P,\Omega}\norm{\Rot h}^2_{L_\infty(t_0,t;L_2(\Omega))}\left(\norm{\nabla v}^2_{L_2(t_0,t;L_6(\Omega))} + \norm{\nabla v}^2_{L_2(\Omega^t)} + \norm{\nabla \omega}^2_{L_2(\Omega^t)}\right) \\
		+ c_{\alpha,\nu,\nu_r,I,\Omega}\left(\norm{f_{,x_3}}^2_{L_2(\Omega^t)} + \norm{g_{,x_3}}^2_{L_2(\Omega^t)} + \norm{\Rot h(t_0)}^2_{L_2(\Omega)} + \norm{h(t_0)}^2_{L_2(\Omega)} + \norm{\theta(t_0)}^2_{L_2(\Omega)}\right).
	\end{multline*}
	In light of Remark \ref{rem7} and \eqref{eq43} we estimate the left-hand side  from below by $\min\{1,\nu\}\norm{h}^2_{V^1_2(\Omega^t)}$. Thus
	\begin{multline*}
		\norm{h}^2_{V^1_2(\Omega^t)} \leq c_{\alpha,\nu,\nu_r,I,P,\Omega} \exp\left(\norm{\nabla v}^2_{L_2(t_0,t;L_6(\Omega))} + E_{v,\omega}^2(t)\right) \cdot \delta(t) \\
		\cdot \left(\norm{\nabla v}^2_{L_2(t_0,t;L_6(\Omega))} + E^2_{v,\omega}(t)\right) + c_{\alpha,\nu,\nu_r,I,\Omega}\delta(t)
	\end{multline*}
	An obvious inequality $e^xx \leq e^{2x}$ for $x \geq 0$ ends the proof.
\end{proof}

\begin{lem}\label{lem13}
	Let $\chi, h_3 \in V_2^0(\Omega^t)$, $f' = (f_1,f_2) \in L_2(\Omega^t)$ and $v(t_0) \in H^1(\Omega)$. Let $E_{v,\omega}(t) < \infty$, $E_{h,\theta}(t) < \infty$. Suppose that $\delta(t)$ is small enough. Then $\nabla v \in V_2^0(\Omega^t)$ and the estimate
	\begin{equation*}
		\norm{\nabla v}_{V^0_2(\Omega^t)} \leq c_{\alpha,\nu,\nu_r,I,P,\Omega} \left(E_{v,\omega}(t) + E_{h,\theta}(t) + \norm{f'}_{L_2(\Omega^t)} + \norm{v(t_0)}_{H^1(\Omega)}\right).
	\end{equation*}
	holds.
\end{lem}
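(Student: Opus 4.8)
The plan is to reduce the whole estimate to the bounds already available for $h$ in $V_2^1(\Omega^t)$ (Lemma \ref{lem6}) and for $\chi$ in $V_2^0(\Omega^t)$ (Lemma \ref{lem9}), and then to close the resulting implicit inequality by exploiting the smallness of $\delta(t)$.

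First I would express $\nabla v$ through $\Rot v$. Since $\Div v = 0$ and $v\cdot n\vert_S = 0$, Lemma \ref{l2} gives $\norm{v}_{H^{k+1}(\Omega)} \le c_\Omega\norm{\Rot v}_{H^k(\Omega)}$ for $k = 0,1$, so it suffices to control $\Rot v = \big((\Rot v)',\chi\big)$ in $L_\infty(t_0,t;L_2(\Omega))$ and in $L_2(t_0,t;H^1(\Omega))$. The third component is exactly $\chi$, handled by Lemma \ref{lem9}. For the tangential part $(\Rot v)'$ I would invoke the Cartesian product structure of $\Omega$: a direct differentiation gives $\partial_{x_3}(\Rot v)' = (\Rot h)'$, while Lemma \ref{lem3}$(b)$ yields $(\Rot v)'\vert_{S_2} = 0$, so for each fixed $x'$
\[
  (\Rot v)'(x',x_3) = \int_{-a}^{x_3} (\Rot h)'(x',s)\,\ud s .
\]
A one-dimensional Poincar\'e inequality in $x_3$ then gives $\norm{(\Rot v)'}_{L_2(\Omega)} \le c_\Omega\norm{\Rot h}_{L_2(\Omega)}$, and differentiating under the integral yields $\norm{\nabla(\Rot v)'}_{L_2(\Omega)} \le c_\Omega\norm{\Rot h}_{H^1(\Omega)}$. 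To keep all constants independent of time I would discard the lowest-order contributions using the Poincar\'e inequalities available for $\chi$ (which vanishes on $S_1$) and for $h$, $h_3$ (Remark \ref{rem1}). Altogether this produces the clean reduction
\[
  \norm{\nabla v}_{V_2^0(\Omega^t)} \le c_{I,P,\Omega}\big(\norm{\chi}_{V_2^0(\Omega^t)} + \norm{h}_{V_2^1(\Omega^t)}\big).
\]

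Next I would substitute the bounds of Lemmas \ref{lem6} and \ref{lem9}. Using $\norm{h}_{L_\infty(t_0,t;L_3(\Omega))} \le c_{I,P}\norm{h}_{V_2^1(\Omega^t)}$ (interpolation between $L_2$ and $L_6$ together with Poincar\'e) and feeding \eqref{eq53} into the right-hand side of Lemma \ref{lem9}, I obtain, writing $X := \norm{\nabla v}_{V_2^0(\Omega^t)}$ and $D := c\big(E_{v,\omega}(t) + E_{h,\theta}(t) + \norm{f'}_{L_2(\Omega^t)} + \norm{v(t_0)}_{H^1(\Omega)}\big)$, an inequality of the schematic form
\[
  X \le c\Big(\exp\big(\norm{\nabla v}^2_{L_2(t_0,t;L_6(\Omega))} + E_{v,\omega}^2(t)\big) + 1\Big)\,\delta(t)\,\big(E_{v,\omega}(t)+1\big) + D .
\]
By the embedding Lemma \ref{lem11} applied componentwise to $\nabla v$ (with $p=6$, $q=2$) one has $\norm{\nabla v}_{L_2(t_0,t;L_6(\Omega))} \le c_I\,X$, so $X$ also occurs inside the exponential; the genuinely data-dependent terms $E_{v,\omega}$, $\norm{f'}$, $\norm{v(t_0)}_{H^1}$ come from Lemma \ref{lem9}, and $E_{h,\theta}$ may be adjoined harmlessly to $D$ since it is nonnegative.

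The main obstacle is precisely to close this implicit inequality, in which $X$ appears in the exponent. I would do this by a continuity (bootstrap) argument in the upper time limit, which simultaneously establishes $\nabla v \in V_2^0(\Omega^t)$ and the bound. The function $t \mapsto \norm{\nabla v}_{V_2^0(\Omega^t)}$ is nondecreasing and continuous, and at $t = t_0$ it is bounded by $\norm{v(t_0)}_{H^1(\Omega)} \le D$. Setting $t^\ast = \sup\{\tau \le t : X(\tau) \le 2D\}$, on $[t_0,t^\ast)$ the exponential is bounded by the fixed finite number $\exp\big(4c\,D^2 + E_{v,\omega}^2(t)\big)$, so choosing $\delta(t)$ small enough (depending only on the finite quantities $D$ and $E_{v,\omega}(t)$) forces the first term on the right-hand side below $D$, whence $X(\tau) < 2D$; continuity then gives $t^\ast = t$. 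This yields $X \le 2D$ on the whole interval, which is the asserted estimate. The remaining points — checking that the one-dimensional Poincar\'e constants depend only on the geometry and that the lowest-order terms are absorbed without introducing time-dependent constants — are routine.
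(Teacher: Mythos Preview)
Your argument is correct, but you reach the key reduction by a genuinely different route than the paper. The paper slices the cylinder: on each cross-section $\Omega' = \Omega \cap \{x_3 = \const\}$ it applies a two-dimensional rot--div estimate to the system $v_{2,x_1} - v_{1,x_2} = \chi$, $v_{1,x_1} + v_{2,x_2} = -h_3$, $v'\cdot n' = 0$ on $S_1'$, obtaining $\norm{\nabla' v'(x_3)}_{H^k(\Omega')} \le c\big(\norm{\chi(x_3)}_{H^k(\Omega')} + \norm{h_3(x_3)}_{H^k(\Omega')}\big)$ for $k = 0,1$, and then integrates in $x_3$; the remaining $x_3$-derivatives of $v$ are supplied directly by Lemma~\ref{lem8} for $h$. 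You instead invoke the three-dimensional estimate of Lemma~\ref{l2} for $v$ itself, decompose $\Rot v = \big((\Rot v)',\chi\big)$, and recover $(\Rot v)'$ from $(\Rot h)'$ via the integral representation along $x_3$ (legitimate because $(\Rot v)'\vert_{S_2} = 0$). Your approach has the advantage of treating all components of $\nabla v$ uniformly --- in particular $\nabla' v_3$, which the paper's 2D slice argument does not address explicitly --- at the cost of needing the full $V_2^1$-bound on $h$ already in the reduction step rather than only in the closing.

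For the closing the paper simply records the implicit inequality $\norm{\nabla v}^2_{V_2^0(\Omega^t)} \le c\,E_{v,\omega}^2(t)\big(\exp(\norm{\nabla v}^2_{V_2^0(\Omega^t)} + E_{v,\omega}^2(t)) + 1\big)\delta(t) + D^2$ and writes ``taking $\delta(t)$ sufficiently small ends the proof''; your continuity/bootstrap argument in the upper time limit makes this step explicit. Both arguments are a~priori and rely on the same smallness mechanism, so the conclusions coincide.
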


\begin{proof}
	Consider the problem
	\begin{equation*}
		\begin{aligned}
			&v_{2,x_1} - v_{1,x_2} = \chi & &\text{in $\Omega'$}, \\
			&v_{1,x_1} + v_{2,x_2} = -h_3 & &\text{in $\Omega'$},  \\
			&v' \cdot n' = 0 & &\text{on $S_1'$}, 
		\end{aligned}
	\end{equation*}
	where by $\Omega'$ and $S'$ we understand the sets $\Omega \cap \{-a < x_3 < a\colon x_3 = \text{const}\}$ and $S' = S_1 \cap \{-a < x_3 < a\colon x_3 = \text{const}\}$.
	For solutions to this problem we have estimates
	\begin{align*}
		\norm{\nabla' v(x_3)}_{L_2(\Omega')}^2 &\leq c_{\Omega'}\left(\norm{\chi(x_3)}^2_{L_2(\Omega')} + \norm{h_3(x_3)}^2_{L_2(\Omega')}\right), \\
		\norm{\nabla' v(x_3)}_{H^1(\Omega')}^2 &\leq c_{\Omega'}\left(\norm{\chi(x_3)}^2_{H^1(\Omega')} + \norm{h_3(x_3)}^2_{H^1(\Omega')}\right).
	\end{align*}
	Integrating both inequalities with respect to $x_3 \in (-a,a)$ ant taking the $L_2$- and $L_\infty$-norms with respect to $t \in (t_0,t_1)$ yields
	\begin{equation*}
		\norm{\nabla' v}_{V_2^0(\Omega^t)}^2 \leq c_{\Omega'}\left(\norm{\chi}^2_{V_2^0(\Omega^t)} + \norm{h_3}_{V_2^0(\Omega^t)}^2\right).
	\end{equation*}
	Using the estimate (see Lemma \ref{lem8})
	\begin{equation*}
		\norm{h}_{V_2^0(\Omega^t)}^2 \leq c_{\alpha,\nu,I,\Omega} \left(E^2_{v,\omega}(t) \norm{h}_{L_{\infty}(t_0,t;L_3(\Omega))}^2 + E^2_{h,\theta}(t)\right)
	\end{equation*} 
	we obtain
	\begin{multline*}
		\norm{\nabla v}_{V^0_2(\Omega^t)}^2 \leq c_{\alpha,\nu,I,\Omega} \left(E_{v,\omega}^2(t) \norm{h}_{L_{\infty}(t_0,t;L_3(\Omega))}^2 + E^2_{h,\theta}(t)\right) \\
		+ c_{\nu,\nu_r,\alpha,\Omega,I,P} E_{v,\omega}^2(t)\norm{h}_{L_\infty(t_0,t;L_3(\Omega))}^2 + c_{\nu,\nu_r}\norm{f'}_{L_2(\Omega^t)}^2 + c_{\alpha,\nu,\nu_r,I,\Omega} E_{v,\omega}^2(t) + \norm{v(t_0)}_{H^1(\Omega)}^2.
	\end{multline*}
	The interpolation and the Poincar\'e inequalities (see Remark \ref{rem1}) imply that
	\begin{equation*}
		\norm{h}_{L_\infty(t_0,t;L_3(\Omega))} \leq \norm{h}_{L_\infty(t_0,t;L_2(\Omega))}^{\frac{1}{2}}\norm{h}_{L_\infty(t_0,t;L_6(\Omega))}^{\frac{1}{2}} \leq c_{I,P} \norm{h}_{L_\infty(t_0,t;H^1(\Omega))}.
	\end{equation*}
	From Lemma \ref{lem11} we infer that
	\begin{equation*}
		\norm{\nabla v}_{L_2(t_0,t;L_6(\Omega))} \leq c_{I} \norm{\nabla v}_{V^0_2(\Omega^t)}.
	\end{equation*}
	Finally
	\begin{multline*}
		\norm{\nabla v}^2_{V^0_2(\Omega^t)} \leq c_{\alpha,\nu,\nu_r,I,P,\Omega} E^2_{v,\omega}(t) \left(\exp\left(\norm{\nabla v}^2_{V^0_2(\Omega^t)} + E^2_{v,\omega}(t)\right) + 1\right) \cdot \delta(t) \\
		+ E^2_{v,\omega}(t) + E^2_{h,\theta}(t) + \norm{f'}_{L_2(\Omega^t)}^2 + \norm{v(t_0)}_{H^1(\Omega)}^2.
	\end{multline*}
	Taking $\delta(t)$ sufficiently small ends the proof.
\end{proof}

\begin{rem}\label{rem6}
	From the above Lemma two natural embeddings follow (see Lem. 3.7 in \cite{wm5}). In view of Lemma \ref{lem11} we see that $\nabla v \in L_q(t_0,t;L_p(\Omega))$ and
	\begin{equation*}
		\norm{\nabla v}_{L_q(t_0,t;L_p(\Omega))} \leq c_I \norm{\nabla v}_{V_2^0(\Omega^t)}
	\end{equation*}
	holds for $p,q$ satisfying $\frac{3}{p} + \frac{2}{q} = \frac{3}{2}$. Setting $p = q$ we get $\frac{5}{p} = \frac{3}{2} \Leftrightarrow p = \frac{10}{3}$. Hence
	\begin{equation*}
		\norm{\nabla v}_{L_{\frac{10}{3}}(\Omega^t)} \leq c_I \norm{\nabla v}_{V_2^0(\Omega^t)}.
	\end{equation*}
	From the Sobolev embedding lemma we know that $W^1_p(\Omega) \hookrightarrow L_{\frac{3p}{3 - p}}(\Omega)$ for $p < 3$. Thus,
	\begin{equation*}
		\norm{v}_{L_q(t_0,t;L_{\frac{3p}{3 - p}}(\Omega))} \leq c_I \norm{\nabla v}_{L_q(t_0,t;L_p(\Omega))} \leq c_I \norm{\nabla v}_{V^0_2(\Omega^t)}
	\end{equation*}
	for $\frac{3}{p} + \frac{2}{q} = \frac{3}{2}$. Let $r = \frac{3p}{3 - p}$ and $q = r$. Then $p = \frac{3r}{3 + r}$ and 
	\begin{equation*}
		\frac{3}{p} + \frac{2}{q} = \frac{3}{2} \Leftrightarrow \frac{3 + r}{r} + \frac{2}{r} = \frac{3}{2} \Leftrightarrow r = 10.
	\end{equation*}
	Hence
	\begin{equation*}
		\norm{v}_{L_{10}(\Omega^t)} \leq c_I\norm{\nabla v}_{V_2^0(\Omega^t)}. 
	\end{equation*}
\end{rem}

\begin{lem}\label{lem25}
	Let $E_{v,\omega}(t) < \infty$ and $E_{h,\theta}(t) < \infty$. Assume that $f,g\in L_2(\Omega^t)$, $v(t_0),\omega(t_0) \in H^1(\Omega)$. Then, for $\delta(t)$ sufficiently small $\omega \in W^{2,1}_{2}(\Omega^t)$ and the inequality
	\begin{equation*}
		\norm{\omega}_{W^{2,1}_{2}(\Omega^t)} \leq c_{\alpha,\nu,\nu_r,I,P,\Omega}\left(E_{v,\omega}(t) + E_{h,\theta}(t) + \norm{f'}_{L_2(\Omega^t)} + \norm{v(t_0)}_{H^1(\Omega)} + \norm{\omega(t_0)}_{H^1(\Omega)} + 1\right)^3 \\
	\end{equation*}
	holds.
\end{lem}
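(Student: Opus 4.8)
The plan is to read the microrotation equation \eqref{p1}$_3$, together with the boundary conditions \eqref{p2}$_{3,4}$ and the initial condition \eqref{p7}, as an instance of the linear parabolic problem \eqref{eq41} treated in Lemma~\ref{lem16}. Moving the convective and lower order terms to the right-hand side,
\[
	\omega_{,t} - \alpha\triangle\omega - \beta\nabla\Div\omega = -v\cdot\nabla\omega - 4\nu_r\omega + 2\nu_r\Rot v + g =: F,
\]
the boundary data coincide with those of \eqref{eq41} ($\omega = 0$ on $S_1$, $\omega' = 0$, $\omega_{3,x_3}=0$ on $S_2$), so Lemma~\ref{lem16} with $p = 2$ yields
\[
	\norm{\omega}_{W^{2,1}_2(\Omega^t)} \leq c_{\Omega}\left(\norm{F}_{L_2(\Omega^t)} + \norm{\omega(t_0)}_{H^1(\Omega)}\right).
\]
Everything then reduces to bounding $\norm{F}_{L_2(\Omega^t)}$ by the data, under the smallness of $\delta(t)$.

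First I would dispose of the linear contributions to $F$. The term $\norm{\omega}_{L_2(\Omega^t)}$ is controlled by the energy estimate of Lemma~\ref{l4}, while $\norm{\Rot v}_{L_2(\Omega^t)} \leq \norm{\nabla v}_{L_2(\Omega^t)}$ is handled through Lemma~\ref{lem13}, which requires $\delta(t)$ sufficiently small and gives $\norm{\nabla v}_{V_2^0(\Omega^t)} \leq c_{\alpha,\nu,\nu_r,I,P,\Omega}\left(E_{v,\omega}(t) + E_{h,\theta}(t) + \norm{f'}_{L_2(\Omega^t)} + \norm{v(t_0)}_{H^1(\Omega)}\right)$. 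The term $\norm{g}_{L_2(\Omega^t)}$ is data by hypothesis.

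The crux is the convective term $v\cdot\nabla\omega$. I would estimate it by the H\"older inequality in space and then in time,
\[
	\norm{v\cdot\nabla\omega}_{L_2(\Omega^t)} \leq \norm{v}_{L_\infty(t_0,t;L_6(\Omega))}\norm{\nabla\omega}_{L_2(t_0,t;L_3(\Omega))},
\]
and interpolate $\norm{\nabla\omega}_{L_3(\Omega)} \leq \norm{\nabla\omega}_{L_2(\Omega)}^{1/2}\norm{\nabla\omega}_{L_6(\Omega)}^{1/2}$, followed by Cauchy--Schwarz in time and $\norm{\nabla\omega}_{L_6(\Omega)} \leq c_I\norm{\omega}_{H^2(\Omega)}$, to obtain
\[
	\norm{\nabla\omega}_{L_2(t_0,t;L_3(\Omega))}^2 \leq c_I\norm{\omega}_{V_2^0(\Omega^t)}\norm{\omega}_{W^{2,1}_2(\Omega^t)}.
\]
The essential point, and the main obstacle, is that exactly \emph{one} power of $\norm{\omega}_{W^{2,1}_2(\Omega^t)}$ appears here; getting the interpolation exponents right so that the target norm enters sublinearly is precisely what allows the next step. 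Inserting this bound into the estimate for $\norm{\omega}_{W^{2,1}_2}$ and applying the Young inequality with $\epsilon$, I can split off $\epsilon\norm{\omega}_{W^{2,1}_2(\Omega^t)}$ and absorb it into the left-hand side. Controlling $\norm{v}_{L_\infty(t_0,t;L_6(\Omega))}$ is done by combining $\norm{v}_{L_6(\Omega)} \leq c_I\norm{v}_{H^1(\Omega)}$ with Lemma~\ref{l4} and Lemma~\ref{lem13}.

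After absorption there remains
\[
	\norm{\omega}_{W^{2,1}_2(\Omega^t)} \leq c\,\norm{v}_{L_\infty(t_0,t;L_6(\Omega))}^2\norm{\omega}_{V_2^0(\Omega^t)} + c\,(\text{linear data terms}).
\]
Finally I would substitute $\norm{v}_{L_\infty(t_0,t;L_6(\Omega))} \leq c\left(E_{v,\omega}(t) + E_{h,\theta}(t) + \norm{f'}_{L_2(\Omega^t)} + \norm{v(t_0)}_{H^1(\Omega)}\right)$ and $\norm{\omega}_{V_2^0(\Omega^t)} \leq c\,E_{v,\omega}(t)$ from Lemma~\ref{l4}. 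The product of a squared data factor and a single data factor produces exactly the cubic dependence $(\cdots)^3$ claimed, with the $+1$ in the bracket absorbing all the remaining linear terms, since the bracket is $\geq 1$.
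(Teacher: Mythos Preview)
Your argument is correct and follows the paper's overall architecture: apply Lemma~\ref{lem16} to the $\omega$-equation with the convective and lower-order terms thrown to the right, bound the linear pieces by the energy estimate, and absorb the sublinear contribution of $\norm{\omega}_{W^{2,1}_2}$ coming from $v\cdot\nabla\omega$. The difference lies in how the convective term is split. The paper uses the H\"older pairing $\norm{v}_{L_{10}(\Omega^t)}\norm{\nabla\omega}_{L_{5/2}(\Omega^t)}$, bounds $\norm{v}_{L_{10}}$ by $\norm{\nabla v}_{V^0_2}$ via Remark~\ref{rem6}, and then invokes the parabolic interpolation Lemma~\ref{lem10} to write $\norm{\nabla\omega}_{L_{5/2}} \leq c_1\epsilon^{1/2}\norm{\omega}_{W^{2,1}_2} + c_2\epsilon^{-1/2}\norm{\omega}_{L_2}$. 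Your route, $\norm{v}_{L_\infty(t_0,t;L_6)}\norm{\nabla\omega}_{L_2(t_0,t;L_3)}$ followed by the elementary interpolation $L_3 \subset L_2^{1/2}L_6^{1/2}$ in space and Cauchy--Schwarz in time, achieves the same half-power of $\norm{\omega}_{W^{2,1}_2}$ without appealing to Lemma~\ref{lem10}; it only needs $\norm{v}_{L_\infty(t_0,t;H^1)}$, which you correctly assemble from Lemmas~\ref{l4} and~\ref{lem13}. Both approaches give the cubic bound for the same structural reason (two powers of a data quantity times one power of $E_{v,\omega}$), so the end results match. A minor remark: for $\norm{\Rot v}_{L_2(\Omega^t)}$ the paper simply cites the energy estimate Lemma~\ref{l4}, which already suffices; invoking Lemma~\ref{lem13} there is harmless but unnecessary.
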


\begin{proof}
	Let us rewrite \eqref{p1}$_2$ and $\eqref{p2}$ in the form
	\begin{equation*}
		\begin{aligned}
			&\begin{aligned}
				&\omega_{,t} - \alpha \triangle \omega - \beta\nabla\Div\omega \\
				&\mspace{60mu} = - v\cdot \nabla \omega - 4\nu_r\omega + 2\nu_r\Rot v + g
			\end{aligned}& &\text{in } \Omega^t, \\
			&\omega = 0& &\text{on } S_1^t, \\
			&\omega' = 0, \qquad \omega_{3,x_3} = 0 & &\text{on } S_2^t, \\
			&\omega\vert_{t = t_0} = \omega(t_0) & &\text{in } \Omega\times\{t = t_0\}.
		\end{aligned}
	\end{equation*}
	Then, from Lemma \ref{lem16} it follows that
	\begin{multline}\label{eq19}
		\norm{\omega}_{W^{2,1}_2(\Omega^t)} \leq c_{\Omega}\Big(\norm{v\cdot \nabla \omega}_{L_2(\Omega^t)} + 4\nu_r \norm{\omega}_{L_2(\Omega^t)} + 2\nu_r \norm{\Rot v}_{L_2(\Omega^t)} \\
		+ \norm{g}_{L_2(\Omega^t)} + \norm{\omega(t_0)}_{H^1(\Omega)}\Big).
	\end{multline}
	From the H\"older inequality we get
	\begin{equation*}
		\norm{v\cdot \nabla \omega}_{L_2(\Omega^t)} \leq c_I\norm{v}_{L_{10}(\Omega^t)} \norm{\nabla \omega}_{L_{\frac{5}{2}}(\Omega^t)}.
	\end{equation*}
	To estimate $\norm{\nabla \omega}_{L_\frac{5}{2}(\Omega^t)}$ we use the interpolation theorem (see Lemma \ref{lem10}) with $p = 2$, $q = \frac{5}{2}$. It gives
	\begin{equation*}
		\norm{\nabla \omega}_{L_{\frac{5}{2}}(\Omega^t)} \leq c_1 \epsilon^{\frac{1}{2}} \norm{\omega}_{W^{2,1}_2(\Omega^t)} + c_2 \epsilon^{-\frac{1}{2}} \norm{\omega}_{L_2(\Omega^t)}.
	\end{equation*}
	Setting $c_1\epsilon^{\frac{1}{2}} = \frac{1}{2c_I\norm{v}_{L_{10}(\Omega^t)}}$ yields
	\begin{equation*}
		\norm{v\cdot \nabla \omega}_{L_2(\Omega^t)} \leq \frac{1}{2}\norm{\omega}_{W^{2,1}_2(\Omega^t)} + 2c_1c_2c_I \norm{v}^2_{L_{10}(\Omega^t)} \norm{\omega}_{L_2(\Omega^t)}.
	\end{equation*}

	To estimate the second and the third term on the right-hand side in \eqref{eq19} we observe that $\norm{\omega}_{L_2(\Omega^t)} \leq \norm{\omega}_{L_2(0,t;H^1(\Omega))}$ and use Lemma \ref{l4}. Finally, the inequality
	\begin{multline*}
		\norm{\omega}_{W^{2,1}_2(\Omega^t)} \leq c_{\alpha,\nu,\nu_r,I,P,\Omega}\bigg(E_{v,\omega}^3(t) + E_{h,\theta}^3(t) + \norm{f'}_{L_2(\Omega^t)} ^3 + \norm{v(t_0)}_{H^1(\Omega)}^3 \\
		+ E_{v,\omega}(t) + \norm{g}_{L_2(\Omega^t)} + \norm{\omega(t_0)}_{H^1(\Omega)}\bigg),
	\end{multline*}
	holds, which ends the proof.
\end{proof}

\begin{lem}\label{lem18}
	Let $E_{v,\omega}(t) < \infty$, $E_{h,\theta}(t) < \infty$. Assume that $f\in L_2(\Omega^t)$, $v(t_0)\in H^1(\Omega)$. Then for $\delta(t)$ sufficiently small $v \in W^{2,1}_2(\Omega^t)$ and
	\begin{equation*}
		\norm{v}_{W^{2,1}_2(\Omega^t)} + \norm{\nabla p}_{L_2(\Omega^t)} \leq c_{\alpha,\nu,\nu_r,I,P,\Omega}\left(E_{v,\omega}(t) + E_{h,\theta}(t) + \norm{f'}_{L_2(\Omega^t)} + \norm{v(t_0)}_{H^1(\Omega)} + 1\right)^3.
	\end{equation*}
\end{lem}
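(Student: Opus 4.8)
The plan is to treat the first equation of \eqref{p1} as a Stokes system and invoke the regularity result of Lemma \ref{lem17}, exactly as the microrotation equation was handled in Lemma \ref{lem25}. Rewriting \eqref{p1}$_1$ together with the boundary and initial conditions \eqref{p2}, \eqref{p7} in the form
\begin{equation*}
	v_{,t} - (\nu + \nu_r)\triangle v + \nabla p = -v\cdot\nabla v + 2\nu_r\Rot\omega + f =: F \quad\text{in }\Omega^t,
\end{equation*}
with $\Div v = 0$, $v\cdot n\vert_S = 0$ and $\Rot v\times n\vert_S = 0$, I would apply Lemma \ref{lem17} with $p = 2$ to obtain
\begin{equation*}
	\norm{v}_{W^{2,1}_2(\Omega^t)} + \norm{\nabla p}_{L_2(\Omega^t)} \leq c_\Omega\left(\norm{F}_{L_2(\Omega^t)} + \norm{v(t_0)}_{H^1(\Omega)}\right).
\end{equation*}
Thus everything reduces to estimating $\norm{F}_{L_2(\Omega^t)}$, the only genuinely nonlinear contribution being $v\cdot\nabla v$; note that the pressure estimate is then automatically supplied by Lemma \ref{lem17}.

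For the nonlinear term I would repeat the argument of Lemma \ref{lem25}. By the H\"older inequality
\begin{equation*}
	\norm{v\cdot\nabla v}_{L_2(\Omega^t)} \leq c_I \norm{v}_{L_{10}(\Omega^t)}\norm{\nabla v}_{L_{\frac{5}{2}}(\Omega^t)},
\end{equation*}
and the interpolation inequality of Lemma \ref{lem10} with $p = 2$, $q = \frac{5}{2}$ gives $\norm{\nabla v}_{L_{5/2}(\Omega^t)} \leq c_1\epsilon^{1/2}\norm{v}_{W^{2,1}_2(\Omega^t)} + c_2\epsilon^{-1/2}\norm{v}_{L_2(\Omega^t)}$. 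Choosing $\epsilon$ so that $c_1\epsilon^{1/2} = (2c_I\norm{v}_{L_{10}(\Omega^t)})^{-1}$ lets me absorb $\tfrac{1}{2}\norm{v}_{W^{2,1}_2(\Omega^t)}$ into the left-hand side, leaving a contribution bounded by $c\,\norm{v}^2_{L_{10}(\Omega^t)}\norm{v}_{L_2(\Omega^t)}$.

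It remains to control the surviving data-like quantities. The factor $\norm{v}_{L_{10}(\Omega^t)}$ is bounded by $c_I\norm{\nabla v}_{V_2^0(\Omega^t)}$ thanks to Remark \ref{rem6}, and $\norm{\nabla v}_{V_2^0(\Omega^t)}$ is in turn estimated by the data through Lemma \ref{lem13} — this is where the smallness of $\delta(t)$ is consumed. The remaining linear pieces of $F$ are harmless: $\norm{f}_{L_2(\Omega^t)}$ is part of the data, $\norm{v}_{L_2(\Omega^t)}$ is dominated by the energy estimate of Lemma \ref{l4}, and $\norm{\Rot\omega}_{L_2(\Omega^t)} \leq \norm{\omega}_{W^{2,1}_2(\Omega^t)}$ is estimated by the cubic bound of Lemma \ref{lem25}. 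Collecting these, the quadratic factor $\norm{v}^2_{L_{10}(\Omega^t)}$ times the single factor $\norm{v}_{L_2(\Omega^t)}$ produces exactly the cubic power of the data on the right-hand side.

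The main obstacle, as in Lemma \ref{lem25}, is the absorption step: one must ensure that the $\epsilon$-dependent constant from the interpolation really can be chosen to swallow the $W^{2,1}_2$-norm, which presupposes that $\norm{v}_{L_{10}(\Omega^t)}$ is already known to be finite and data-controlled — precisely the content of Lemma \ref{lem13} together with Remark \ref{rem6}. Once the a priori finiteness of $\norm{\nabla v}_{V_2^0(\Omega^t)}$ under the smallness of $\delta(t)$ is in hand, the rest is bookkeeping of exponents to confirm the asserted cubic growth.
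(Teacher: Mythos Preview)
Your proposal is correct and follows essentially the same route as the paper: rewrite \eqref{p1}$_1$ as a Stokes problem, apply Lemma \ref{lem17}, estimate $v\cdot\nabla v$ via H\"older plus the interpolation Lemma \ref{lem10}, absorb the $W^{2,1}_2$-piece, and control $\norm{v}_{L_{10}(\Omega^t)}$ through Remark \ref{rem6} and Lemma \ref{lem13}.

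One small inefficiency: for the term $\norm{\Rot\omega}_{L_2(\Omega^t)}$ you invoke the full cubic bound of Lemma \ref{lem25}, which drags in $\norm{g}_{L_2(\Omega^t)}$ and $\norm{\omega(t_0)}_{H^1(\Omega)}$---neither of which appears on the right-hand side of the stated inequality. The paper instead uses the basic energy estimate of Lemma \ref{l4}, which already gives $\norm{\Rot\omega}_{L_2(\Omega^t)} \leq \norm{\omega}_{L_2(t_0,t;H^1(\Omega))} \leq c\,E_{v,\omega}(t)$ directly and keeps the bound in the form asserted.
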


\begin{proof}
	Let us rewrite \eqref{p1}$_1$, \eqref{p2}$_{1,2}$ and \eqref{p7} in the following form
	\begin{equation*}
		\begin{aligned}
			&v_{,t} - (\nu + \nu_r)\triangle v + \nabla p = - v\cdot \nabla v + 2\nu_r\Rot\omega + f   & &\text{in } \Omega^t,\\
			&\Div v = 0 & &\text{in } \Omega^t,\\
			&v\cdot n = 0 & &\text{on } S^t,\\
			&\Rot v \times n = 0  & &\text{on } S^t, \\
			&v\vert_{t = t_0} = v(t_0) & &\text{on } \Omega\times\{t = t_0\}.
		\end{aligned}
	\end{equation*}
	For solution to the above problem we have the following estimate (see Lemma \ref{lem17})
	\begin{equation}\label{eq20}
		\norm{v}_{W^{2,1}_2(\Omega^t)} + \norm{\nabla p}_{L_2(\Omega^t)} 
		\leq c_{\Omega} \left(\norm{v\cdot \nabla v}_{L_2(\Omega^t)} + \norm{\Rot \omega}_{L_2(\Omega^t)} + \norm{f}_{L_2(\Omega^t)} + \norm{v(t_0)}_{H^1(\Omega)}\right).
	\end{equation}
	To estimate the first term on the right-hand side we use Lemma \ref{lem13}, which implies that for $\delta(t)$ small enough we have
	\begin{equation*}
		\norm{v}_{L_{10}(\Omega^t)} \leq c_{\alpha,\nu,\nu_r,I,P,\Omega} \left(E_{v,\omega}(t) + E_{h,\theta}(t) + \norm{f'}_{L_2(\Omega^t)} + \norm{v(t_0)}_{H^1(\Omega)}\right).
	\end{equation*}
	Subsequently, from the H\"older inequality we get
	\begin{equation*}
		\norm{v\cdot \nabla v}_{L_2(\Omega^t)} \leq c_I\norm{v}_{L_{10}(\Omega^t)} \norm{\nabla v}_{L_{\frac{5}{2}}(\Omega^t)}.
	\end{equation*}
	To estimate $\norm{\nabla v}_{L_\frac{5}{2}(\Omega^t)}$ we use the interpolation theorem (see Lemma \ref{lem10}) with $p = 2$, $q = \frac{5}{2}$. It gives
	\begin{equation*}
		\norm{\nabla v}_{L_{\frac{5}{2}}(\Omega^t)} \leq c_1 \epsilon^{\frac{1}{2}} \norm{v}_{W^{2,1}_2(\Omega^t)} + c_2 \epsilon^{-\frac{1}{2}} \norm{v}_{L_2(\Omega^t)}.
	\end{equation*}
	Setting $c_1\epsilon^{\frac{1}{2}} = \frac{1}{2c_I\norm{v}_{L_{10}(\Omega^t)}}$ yields
	\begin{equation*}
		\norm{v\cdot \nabla v}_{L_2(\Omega^t)} \leq \frac{1}{2}\norm{v}_{W^{2,1}_2(\Omega^t)} + 2c_1c_2c_I \norm{v}^2_{L_{10}(\Omega^t)} \norm{v}_{L_2(\Omega^t)}.
	\end{equation*}
	To estimate the second and the third term on the right-hand side in \eqref{eq20} we use Lemma \ref{l4}. Finally
	\begin{multline*}
		\norm{v}_{W^{2,1}_2(\Omega^t)} + \norm{\nabla p}_{L_2(\Omega^t)} \leq c_{\alpha,\nu,\nu_r,I,P,\Omega}\Big(E^3_{v,\omega}(t) + E^3_{h,\theta}(t) + \norm{f'}^3_{L_2(\Omega^t)} + \norm{v(t_0)}^3_{L_2(\Omega)} \\
		+ E_{v,\omega}(t) + \norm{v(t_0)}_{H^1(\Omega)}\Big).
	\end{multline*}
	This concludes the proof.
\end{proof}

\section{Existence of solutions}

Now we prove the existence of regular solutions. The proof is based on the fixed-point principle (see Lemma \ref{lem24}) and makes use of the inequalities we derived in previous Sections.

First we rewrite \eqref{p1} in following way

\begin{equation}\label{p10}
	\begin{aligned}
		&v_{,t}  - (\nu + \nu_r)\triangle v + \nabla p = - \lambda\left(\bar v\cdot \nabla \bar v + 2\nu_r\Rot\bar\omega\right) + f   & &\text{in } \Omega^t, \\
		&\Div v = 0 & &\text{in } \Omega^t,\\
		&\begin{aligned}
			&\omega_{,t} - \alpha \triangle \omega - \beta\nabla\Div\omega + 4\nu_r\omega \\
			&\mspace{60mu} = -\lambda \left(\bar v\cdot \nabla \bar \omega + 2\nu_r\Rot \bar v\right) + g
		\end{aligned}& &\text{in } \Omega^t,\\
		&\Rot v \times n = 0, \qquad v\cdot n = 0 & &\text{on } S^t, \\
		&\omega = 0 & &\text{on } S^t_1, \\
		&\omega' = 0, \qquad \omega_{3,x_3} = 0 & &\text{on } S^t_2, \\
		&v\vert_{t = t_0} = v(t_0), \qquad \omega\vert_{t = t_0} = \omega(t_0) & &\text{on } \Omega\times\{t = t_0\}.
	\end{aligned}
\end{equation}
where $\lambda\in[0,1]$ and $\bar v$, $\bar \omega$ are considered as given functions.

Let us introduce space
\begin{equation*}
	\mathfrak{M}(\Omega^t) := \left\{u\colon \norm{u}_{L_{\frac{20}{3}}(\Omega^t)} < \infty, \norm{\nabla u}_{L_{\frac{20}{7}}(\Omega^t)} < \infty\right\}.
\end{equation*}
Problem \eqref{p10} determines the mapping
\begin{align*}
	&\Phi\colon \mathfrak{M}(\Omega^t)\times\mathfrak{M}(\Omega^t)\times[0,1] \to \mathfrak{M}(\Omega^t)\times\mathfrak{M}(\Omega^t), \\
	&\Phi(\bar v,\bar \omega,\lambda) = (v,\omega).
\end{align*}
In Section \ref{sec3.2} we found a priori estimate for a~fixed point of $\Phi$ when $\lambda = 1$ (see Lemmas \ref{lem18} and \ref{lem25}). For $\lambda = 0$ we check if the uniqueness of solution is ensured. 
\begin{lem}\label{lem31}
	Suppose that $\lambda = 0$. Then, problem \eqref{p10} possesses a unique solution.
\end{lem}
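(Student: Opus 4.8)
The plan is to exploit the fact that at $\lambda = 0$ every coupling term in \eqref{p10} — namely $\bar v\cdot\nabla\bar v$, $\Rot\bar\omega$, $\bar v\cdot\nabla\bar\omega$ and $\Rot\bar v$ — is multiplied by $\lambda$ and hence disappears. Consequently $\Phi(\cdot,0)$ does not depend on $(\bar v,\bar\omega)$ at all, and the problem splits into two \emph{independent linear} problems: a Stokes system for the pair $(v,p)$ and a linear parabolic system for $\omega$, each already treated among the auxiliary results. The whole statement will therefore follow by invoking those lemmas and checking uniqueness.

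First I would note that the $v$-part of \eqref{p10} with $\lambda = 0$ coincides exactly with the Stokes problem \eqref{eq42} (the boundary conditions $\Rot v\times n = 0$, $v\cdot n = 0$ and the initial datum match verbatim). Applying Lemma \ref{lem17} with $p = 2$ and $F = f$ produces a solution $v\in W^{2,1}_2(\Omega^t)$ together with $\nabla p\in L_2(\Omega^t)$. Uniqueness is immediate from linearity: the difference of two solutions solves \eqref{eq42} with $F = 0$ and vanishing initial value, so the estimate of Lemma \ref{lem17} forces $v$ and $\nabla p$ to be zero. Next, the $\omega$-part reads
\begin{equation*}
	\omega_{,t} - \alpha\triangle\omega - \beta\nabla\Div\omega + 4\nu_r\omega = g
\end{equation*}
with $\omega = 0$ on $S_1^t$, $\omega' = 0$, $\omega_{3,x_3} = 0$ on $S_2^t$ and $\omega\vert_{t=t_0} = \omega(t_0)$. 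This is precisely problem \eqref{eq41} of Lemma \ref{lem16} up to the zeroth-order term $4\nu_r\omega$.

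The only point that requires a word of justification — and thus the ``main obstacle'', such as it is — is this extra term $4\nu_r\omega$, which is not literally present in Lemma \ref{lem16}. The plan is to observe that, since $4\nu_r > 0$, the operator $-\alpha\triangle - \beta\nabla\Div + 4\nu_r\Id$ remains uniformly elliptic (the added term merely shifts the spectrum to the right and strengthens coercivity), so the localization by the partition of unity $\sum_k\zeta_k(x_3)$, the reflection across $x_3 = \pm a$, and the appeal to the parabolic theory of \cite{sol2} together with the time-uniformity argument of \cite{wah} all carry over without change; this yields a unique $\omega\in W^{2,1}_2(\Omega^t)$. Should a direct appeal to ellipticity be judged insufficient, an equivalent route is to move $4\nu_r\omega$ to the right-hand side and solve the resulting equation by a continuation (or contraction) argument based on the energy estimate, exactly as the zeroth-order contribution $4\nu_r\int_\Omega\omega^2\,\ud x$ was handled in Lemma \ref{l4}. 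Either way, collecting the two independent components furnishes the unique pair $(v,\omega)\in W^{2,1}_2(\Omega^t)\times W^{2,1}_2(\Omega^t)$ solving \eqref{p10} at $\lambda = 0$, which is the assertion.
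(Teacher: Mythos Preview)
Your proposal is correct and follows essentially the same route as the paper: both arguments reduce the $\lambda=0$ case to the linear Stokes problem of Lemma~\ref{lem17} and the linear parabolic problem of Lemma~\ref{lem16}. The only difference is cosmetic: for uniqueness the paper writes out the energy identity for the difference $(V,\Theta)$ explicitly, whereas you obtain it directly from the a~priori estimates of those lemmas applied to homogeneous data; your remark on the harmless zeroth-order term $4\nu_r\omega$ is in fact more careful than the paper, which cites Lemma~\ref{lem16} without comment.
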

\begin{proof}
	Let $(v^1,\omega^1)$ and $(v^2,\omega^2)$ be two different solutions to \eqref{p10}. Then, the pair $(V,\Theta)$, $V = v^1 - v^2$, $\Theta = \omega^1 - \omega^2$ is a solution to the problem
	\begin{equation}\label{eq36}
		\begin{aligned}
			&V_{,t} - (\nu + \nu_r) \triangle V + \nabla P = 0 & &\text{in $\Omega^t$}, \\
			&\Div V = 0 & &\text{in $\Omega^t$}, \\
			&\Theta_{,t} - \alpha \triangle \Theta - \beta\nabla \Div \Theta + 4\nu_r \Theta = 0 & &\text{in $\Omega$}, \\
			&\Rot V \times n = 0, \qquad V\cdot n = 0 & &\text{on } S^t, \\
			&\Theta = 0 & &\text{on } S^t_1, \\
			&\Theta' = 0, \qquad \Theta_{3,x_3} = 0 & &\text{on } S^t_2, \\
			&V\vert_{t = t_0} = 0, \qquad \Theta\vert_{t = t_0} = 0 & &\text{on } \Omega\times\{t = t_0\},
		\end{aligned}
	\end{equation}
	where we set $P = p^1 - p^2$. Multiplying the first equation by $V$, the third by $\Theta$ and integrating over $\Omega$ yields
	\begin{multline*}
		\Dt \int_{\Omega} \abs{V}^2 + \abs{\Theta}^2\, \ud x - (\nu + \nu_r)\int_{\Omega} \triangle V \cdot V\, \ud x - \alpha \int_{\Omega} \triangle \Theta \cdot \Theta\, \ud x - \beta \int_{\Omega} \nabla \Div \Theta \cdot \Theta\, \ud x \\
		+ \int_{\Omega} \nabla P \cdot V\, \ud x + 4\nu_r \norm{\Theta}^2_{L_2(\Omega)} = 0.
	\end{multline*}
	From Lemma \ref{l1} it follows
	\begin{align*}
		-&\int_{\Omega} \triangle V\cdot V\, \ud x = \int_{\Omega} \Rot\Rot V\cdot V\, \ud x = \int_{\Omega} \abs{\Rot V}^2\, \ud x + \int_S \Rot V \times n \cdot V\, \ud S = \norm{\Rot V}_{L_2(\Omega)}^2, \\
		-&\int_{\Omega} \triangle \Theta\cdot \Theta\, \ud x = \int_{\Omega} \Rot\Rot \Theta\cdot \Theta\, \ud x = \int_{\Omega} \abs{\Rot \Theta}^2\, \ud x - \int_S \Theta \times n \cdot \Rot \Theta\, \ud S = \norm{\Rot \Theta}_{L_2(\Omega)}^2,
	\end{align*}
	where the boundary integrals vanished due to \eqref{eq36}$_{4,5,6}$. Integration by parts yields
	\begin{equation*}
		-\int_{\Omega}\nabla \Div \Theta\cdot\Theta\, \ud x = \int_{\Omega} \abs{\Div \Theta}^2\, \ud x - \int_S \Div\Theta (\Theta\cdot n)\, \ud S = \norm{\Div \Theta}^2_{L_2(\Omega)},
	\end{equation*}
	where the boundary integral also vanishes due to \eqref{eq36}$_{5,6}$. Thus
	\begin{multline*}
		\Dt \int_{\Omega} \abs{V}^2 + \abs{\Theta}^2\, \ud x + (\nu + \nu_r)\norm{\Rot V}^2 + \alpha \left(\norm{\Rot \Theta}^2_{L_2(\Omega)} + \norm{\Div \Theta}^2_{L_2(\Omega)}\right) + \beta \norm{\Div \Theta}^2_{L_2(\Omega)} \\
		+ 4\nu_r \norm{\Theta}^2_{L_2(\Omega)} = 0.
	\end{multline*}
	After integrating with respect to $t \in (t_0,t_1)$ we obtain
	\begin{equation*}
		\norm{V(t)}_{L_2(\Omega)}^2 + \norm{\Theta(t)}^2_{L_2(\Omega)} \leq \norm{V(t_0)}^2_{L_2(\Omega)} + \norm{\Theta(t_0)}^2_{L_2(\Omega)} = 0.
	\end{equation*}
	Thus, we have proved the uniqueness. The existence of solution follow from Lemmas \ref{lem16} and \ref{lem17}.	This ends the proof.
\end{proof}

Next we show that $\Phi$ is a compact and continuous mapping.
\begin{lem}\label{lem23}
	The mapping $\Phi$ is compact and continuous.
\end{lem}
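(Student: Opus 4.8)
The plan is to notice that, for frozen $(\bar v,\bar\omega,\lambda)$, the map $\Phi$ is the composition of a \emph{linear} solution operator with the passage to the data. First I would write the two right-hand sides
\[
F = -\lambda\big(\bar v\cdot\nabla\bar v + 2\nu_r\Rot\bar\omega\big) + f, \qquad G = -\lambda\big(\bar v\cdot\nabla\bar\omega + 2\nu_r\Rot\bar v\big) + g,
\]
and solve the Stokes system of Lemma \ref{lem17} for $v$ and the parabolic system of Lemma \ref{lem16} for $\omega$ (the coercive lower-order term $4\nu_r\omega$ does not affect the regularity estimate). Both operators carry $L_2(\Omega^t)$-data into $W^{2,1}_2(\Omega^t)$, so the whole argument rests on two facts: that $F,G\in L_2(\Omega^t)$ depend continuously on $(\bar v,\bar\omega,\lambda)$, and that $W^{2,1}_2(\Omega^t)$ embeds compactly into $\mathfrak{M}(\Omega^t)$.

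For the first fact I would exploit that the exponents defining $\mathfrak{M}(\Omega^t)$ satisfy $\tfrac{3}{20}+\tfrac{7}{20}=\tfrac12$, so that Hölder gives
\[
\norm{\bar v\cdot\nabla\bar v}_{L_2(\Omega^t)} \le \norm{\bar v}_{L_{20/3}(\Omega^t)}\,\norm{\nabla\bar v}_{L_{20/7}(\Omega^t)},
\]
and similarly for $\bar v\cdot\nabla\bar\omega$; the rotation terms are handled by $\norm{\Rot\bar\omega}_{L_2}\le c\,\norm{\nabla\bar\omega}_{L_{20/7}}$, valid because $\Omega^t$ has finite measure and $20/7>2$. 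Together with $f,g\in L_2(\Omega^t)$ this shows $F,G\in L_2(\Omega^t)$ with norms bounded by the $\mathfrak{M}$-norms of $\bar v,\bar\omega$, so Lemmas \ref{lem17} and \ref{lem16} send bounded subsets of $\mathfrak{M}\times\mathfrak{M}\times[0,1]$ into bounded subsets of $W^{2,1}_2(\Omega^t)\times W^{2,1}_2(\Omega^t)$.

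For continuity I would use that the two linear solution operators are bounded, so it is enough to prove continuity of $(\bar v,\bar\omega,\lambda)\mapsto(F,G)$ into $L_2\times L_2$. The only genuinely nonlinear contributions come from the bilinear terms, which along a convergent sequence I would telescope as
\[
\lambda_n\bar v_n\cdot\nabla\bar v_n - \lambda\bar v\cdot\nabla\bar v = (\lambda_n-\lambda)\,\bar v_n\cdot\nabla\bar v_n + \lambda\,(\bar v_n-\bar v)\cdot\nabla\bar v_n + \lambda\,\bar v\cdot\nabla(\bar v_n-\bar v),
\]
estimating each summand in $L_2$ by the same Hölder pairing; every factor is either a difference tending to $0$ in $\mathfrak{M}$ or a uniformly bounded $\mathfrak{M}$-norm, hence the sum tends to $0$. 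The rotation terms and the $\lambda$-dependence are linear and immediate; since $\lambda$ enters affinely and ranges over the compact interval $[0,1]$, the continuity is uniform in $\lambda$. Passing through the bounded solution operators and the continuous embedding $W^{2,1}_2\hookrightarrow\mathfrak{M}$ then gives $\Phi(\bar v_n,\bar\omega_n,\lambda_n)\to\Phi(\bar v,\bar\omega,\lambda)$ in $\mathfrak{M}\times\mathfrak{M}$.

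The hard part will be the compactness of the embedding $W^{2,1}_2(\Omega^t)\hookrightarrow\mathfrak{M}(\Omega^t)$. By Lemma \ref{lem10} with $p=2$ one has the borderline continuous embeddings $W^{2,1}_2(\Omega^t)\hookrightarrow L_{10}(\Omega^t)$ (take $r=s=0$, $\kappa=0$) and $\nabla u\in L_{10/3}(\Omega^t)$ for $u\in W^{2,1}_2(\Omega^t)$ (take $r=0$, $s=1$, $\kappa=0$). I would then invoke an Aubin--Lions--Simon compactness theorem, using the extra control of $u_{,t}$ in $L_2$, to obtain strong $L_2(\Omega^t)$-convergence of a bounded sequence and of its gradient along a subsequence; interpolating this against the borderline $L_{10}$ and $L_{10/3}$ bounds and using that the target exponents $\tfrac{20}{3}<10$ and $\tfrac{20}{7}<\tfrac{10}{3}$ are strictly subcritical upgrades the convergence to the $\mathfrak{M}$-norms. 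Combined with the boundedness from the second step, this shows that $\Phi$ maps bounded sets to relatively compact sets, which completes the proof.
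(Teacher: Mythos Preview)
Your proposal is correct and follows the same overall architecture as the paper: bound the right-hand sides $F,G$ in $L_2(\Omega^t)$ via the H\"older pairing $\tfrac{3}{20}+\tfrac{7}{20}=\tfrac12$, feed them into the linear solution operators of Lemmas~\ref{lem16} and~\ref{lem17} to land in $W^{2,1}_2(\Omega^t)$, and then use the embedding into $\mathfrak{M}(\Omega^t)$. Your continuity argument via telescoping the bilinear terms is exactly what the paper does (the paper phrases it for two fixed inputs $(\bar v^1,\bar\omega^1)$ and $(\bar v^2,\bar\omega^2)$ rather than along a sequence, but the computation is the same).

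The one place you diverge is the compactness of $W^{2,1}_2(\Omega^t)\hookrightarrow\mathfrak{M}(\Omega^t)$, which you call ``the hard part'' and attack through Aubin--Lions--Simon plus interpolation between $L_2$ and the borderline spaces $L_{10}$, $L_{10/3}$. This works, but the paper disposes of it in one line: Lemma~\ref{lem10} (i.e.\ the parabolic embedding theorem of Lady\v{z}enskaja--Solonnikov--Ural'ceva) already yields \emph{compactness} whenever the defect index $\kappa$ is strictly positive, and for the target exponents $q=\tfrac{20}{3}$ (with $r=s=0$) and $q=\tfrac{20}{7}$ (with $r=0$, $s=1$) one computes $\kappa=\tfrac14>0$ in both cases. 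So what you prove by hand is precisely the content of the strict subcriticality you yourself noted ($\tfrac{20}{3}<10$, $\tfrac{20}{7}<\tfrac{10}{3}$); invoking Lemma~\ref{lem10} directly would save you the Aubin--Lions step and the interpolation.
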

\begin{proof}
	Assume that $\bar v\in \mathfrak{M}(\Omega^t)$. Then
	\begin{equation*}
		\norm{\bar v\cdot \nabla \bar v}_{L_2(\Omega^t)} \leq \norm{\bar v}_{L_{\frac{20}{3}}(\Omega^t)}\norm{\nabla \bar v}_{L_{\frac{20}{7}}(\Omega^t)} \leq \norm{\bar v}_{\mathfrak{M}(\Omega^t)}^2.
	\end{equation*}
	In the same way we get that
	\begin{equation*}
		\norm{\bar v \cdot \nabla \bar \omega}_{L_2(\Omega^t)} \leq \norm{\bar v}_{L_{\frac{20}{3}}(\Omega^t)} \norm{\nabla \bar \omega}_{L_{\frac{20}{7}}(\Omega^t)} \leq \norm{\bar v}_{\mathfrak{M}(\Omega^t)}\norm{\bar \omega}_{\mathfrak{M}(\Omega^t)}.
	\end{equation*}
	In view of Lemmas \ref{lem16} and \ref{lem17} we get that the solution to \eqref{p10} belongs to $W^{2,1}_2(\Omega^t)$. From Lemma \ref{lem10} we deduce that the embeddings
	\begin{equation}\label{eq21}
		\begin{aligned}
			&W^{2,1}_2(\Omega^t) \hookrightarrow L_{\frac{20}{3}}(\Omega^t) \Leftrightarrow \kappa = 2 - 5\left(\frac{1}{2} - \frac{3}{20}\right) = \frac{1}{4} > 0, \\
			&W^{2,1}_2(\Omega^t) \hookrightarrow L_{\frac{20}{7}}(0,t;W^1_{\frac{20}{7}}) \Leftrightarrow \kappa = 2 - 1 - 5\left(\frac{1}{2} - \frac{7}{20}\right) = \frac{1}{4} > 0
		\end{aligned}
	\end{equation}
	are compact. This proves the compactness of $\Phi$.

	In order to prove the continuity we consider problem \eqref{p10} in a form
	\begin{equation}
		\begin{aligned}\label{p12}
			&\begin{aligned}
				&v^k_{,t} - (\nu + \nu_r)\triangle v^k + \nabla p^k \\
				&\mspace{60mu} = -\lambda\left(\bar v^k\cdot \nabla \bar v^k - 2\nu_r\Rot\bar\omega^k\right) + f
			&\end{aligned}& &\text{in } \Omega^t,\\
			&\begin{aligned}
				&\omega_{,t}^k - \alpha \triangle \omega - \beta\nabla\Div\omega^k + 4\nu_r \omega^k\\
				&\mspace{60mu} = -\lambda\left(\bar v^k\cdot\nabla \bar \omega^k  + 2\nu_r\Rot \bar v^k\right) + g
			\end{aligned}& &\text{in } \Omega^t,\\
			&\Div v^k = 0 & &\text{in } \Omega^t, \\
			&\Rot v^k\times n = 0, \qquad v^k \cdot n = 0 & &\text{on } S^t, \\
			&\omega^k = 0 & &\text{on } S_1^t, \\
			&\left(\omega^k\right)' = 0, \qquad \omega_{3,x_3}^k = 0 & &\text{on } S^t_2, \\
			&v^k\vert_{t = t_0} = v(t_0), \qquad \omega^k\vert_{t = t_0} = \omega(t_0) & &\text{on } \Omega\times\{t = t_0\},
		\end{aligned}
	\end{equation}
	where $k = 1,2$. Let
	\begin{equation*}
		V = v^1 - v^2, \qquad P = p^1 - p^2, \qquad \Theta = \omega^1 - \omega^2.
	\end{equation*}
	Then $(V,\Theta)$ is solution to the problem
	\begin{equation*}
		\begin{aligned}
			&V_{,t}  - (\nu + \nu_r)\triangle V + \nabla P = - \lambda\left(\bar V\cdot \nabla \bar v^1 + \bar v^2\cdot \nabla \bar V + 2\nu_r\Rot\bar\Theta\right)  & &\text{in } \Omega^t, \\
			&\Div V = 0 & &\text{in } \Omega^t,\\
			&\begin{aligned}
				&\Theta_{,t} - \alpha \triangle \Theta - \beta\nabla\Div\Theta  + 4\nu_r\Theta\\
				&\mspace{60mu} = -\lambda \left(\bar v^1\cdot \nabla \bar \Theta + \bar V\cdot \nabla \bar \omega^2  + 2\nu_r\Rot \bar V\right)
			\end{aligned}& &\text{in } \Omega^t,\\
			&\Rot V \times n = 0, \qquad V\cdot n = 0 & &\text{on } S^t, \\
			&\Theta = 0 & &\text{on } S^t_1, \\
			&\Theta' = 0, \qquad \Theta_{3,x_3} = 0 & &\text{on } S^t_2, \\
			&V\vert_{t = t_0} = 0, \qquad \Theta\vert_{t = t_0} = 0 & &\text{on } \Omega\times\{t = t_0\}.
		\end{aligned}
	\end{equation*}
	Assume that $\lambda \neq 0$. Then the estimates
	\begin{equation*}
		\norm{V}_{\mathfrak{M}(\Omega^t)} \leq c_{\Omega} \norm{V}_{W^{2,1}_2(\Omega^t)} \leq c_{\Omega}\lambda\left(\norm{\bar V\cdot \nabla \bar v^1 + \bar v^2\cdot \nabla \bar V + 2\nu_r\Rot\bar\Theta}_{L_2(\Omega^t)}\right)
	\end{equation*}
	and
	\begin{equation*}
		\norm{\Theta}_{\mathfrak{M}(\Omega^t)} \leq c_{\Omega}\norm{\Theta}_{W^{2,1}_2(\Omega^t)} \leq c_{\Omega} \lambda\left(\norm{\bar v^1\cdot \nabla \bar \Theta + \bar V\cdot \nabla\bar \omega^2 + 2\nu_r\Rot \bar V}_{L_2(\Omega^t)}\right).
	\end{equation*}
	follow from Lemmas \ref{lem17} and \ref{lem16}. Next we add both inequalities and utilize the H\"older inequality. This yields
	\begin{multline*}
		\norm{V}_{\mathfrak{M}(\Omega^t)} + \norm{\Theta}_{\mathfrak{M}(\Omega^t)} \leq c_{\nu_r,\lambda,\Omega}\Big(\norm{\bar V}_{L_{\frac{20}{3}}(\Omega^t)}\norm{\bar \nabla v^1}_{L_{\frac{20}{7}}(\Omega^t)} + \norm{\bar v^2}_{L_{\frac{20}{3}}(\Omega^t)}\norm{\nabla \bar V}_{L_{\frac{20}{7}}(\Omega^t)}\\
		+ \norm{\bar v^1}_{L_{\frac{20}{3}}(\Omega^t)}\norm{\nabla \bar\Theta}_{L_{\frac{20}{7}}(\Omega^t)} + \norm{\bar V}_{L_{\frac{20}{3}}(\Omega^t)}\norm{\nabla \bar \omega^2}_{L_{\frac{20}{7}}(\Omega^t)} + \norm{\Rot \bar \Theta}_{L_2(\Omega^t)} + \norm{\Rot \bar V}_{L_2(\Omega^t)}\Big).
	\end{multline*}
	Thus
	\begin{multline*}
		\norm{V}_{\mathfrak{M}(\Omega^t)} + \norm{\Theta}_{\mathfrak{M}(\Omega^t)} \leq c_{\nu_r,\lambda,\Omega}\Big(\norm{\bar V}_{\mathfrak{M}(\Omega^t)} + \norm{\bar \Theta}_{\mathfrak{M}(\Omega^t)}\Big) \cdot \Big(\norm{\nabla \bar v^1}_{L_{\frac{20}{7}}(\Omega^t)} + \norm{\bar v^2}_{L_{\frac{20}{3}}(\Omega^t)} \\
		+ \norm{\bar v^1}_{L_{\frac{20}{3}}(\Omega^t)} + \norm{\nabla \bar \omega^2}_{L_{\frac{20}{7}}(\Omega^t)} 		+ \norm{\Rot \bar \Theta}_{L_2(\Omega^t)} + \norm{\Rot \bar V}_{L_2(\Omega^t)}\Big).
	\end{multline*}
	In view of \eqref{eq21} and Lemmas \ref{lem25}, \ref{lem18} and \ref{l4} we can estimate all norms in the last bracket in terms of data only. This observation results in the following inequality
	\begin{equation*}
		\norm{V}_{\mathfrak{M}(\Omega^t)} + \norm{\Theta}_{\mathfrak{M}(\Omega^t)} \leq c_{\nu_r,\lambda,\Omega,\text{data}} \Big(\norm{\bar V}_{\mathfrak{M}(\Omega^t)} + \norm{\bar \Theta}_{\mathfrak{M}(\Omega^t)}\Big),
	\end{equation*}
	where $c_{\text{data}}$ indicates the dependence on the data.
	It proves the uniform continuity of $\Phi$. The continuity of $\Phi$ with respect to $\lambda$ is evident. This end the proof. 
\end{proof}

\begin{lem}\label{lem30}
	Let $E_{v,\omega}(t) < \infty$, $E_{h,\theta}(t) < \infty$. Assume that $f,g \in L_2(\Omega^t)$, $v(t_0), \omega(t_0) \in H^1(\Omega)$. Then, for $\delta(t)$ small enough problem \eqref{p1} admits a solution $(v,\omega) \in W^{2,1}_2(\Omega^t)\times W^{2,1}_2(\Omega^t)$ such that
	\begin{equation*}
		\norm{v}_{W^{2,1}_2(\Omega^t)} + \norm{\nabla p}_{L_2(\Omega^t)} \leq c_{\alpha,\nu,\nu_r,I,P,\Omega}\left(E_{v,\omega}(t) + E_{h,\theta}(t) + \norm{f'}_{L_2(\Omega^t)} + \norm{v(t_0)}_{H^1(\Omega)} + 1\right)^3
	\end{equation*}
	and
	\begin{equation*}
		\norm{\omega}_{W^{2,1}_{2}(\Omega^t)} \leq c_{\alpha,\nu,\nu_r,I,P,\Omega}\left(E_{v,\omega}(t) + E_{h,\theta}(t) + \norm{f'}_{L_2(\Omega^t)} + \norm{v(t_0)}_{H^1(\Omega)} + \norm{\omega(t_0)}_{H^1(\Omega)} + 1\right)^3.
	\end{equation*}
\end{lem}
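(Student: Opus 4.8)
The plan is to realise $(v,\omega)$ as a fixed point of the map $\Phi(\cdot,\cdot,1)$ coming from \eqref{p10} and to obtain that fixed point through the Leray--Schauder principle of Lemma \ref{lem24}, applied on the Banach space $X = \mathfrak{M}(\Omega^t)\times\mathfrak{M}(\Omega^t)$. A fixed point of $\Phi(\cdot,\cdot,\lambda)$ is by definition a pair $(v,\omega)$ with $\bar v = v$ and $\bar\omega = \omega$, so that for $\lambda = 1$ it solves exactly \eqref{p1} together with the boundary and initial conditions \eqref{p2}, \eqref{p7}. It therefore suffices to verify the four hypotheses of Lemma \ref{lem24} and then to read off the two quantitative bounds.

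First I would invoke Lemma \ref{lem23}, which already delivers both the continuity of $\Phi(\cdot,\cdot,\lambda)$ for fixed $\lambda$ and its uniform continuity in $\lambda$ for fixed $(\bar v,\bar\omega)$; this settles the first two bullets. The fourth bullet, that $\Phi(\cdot,\cdot,0)$ has a single fixed point, is precisely Lemma \ref{lem31}. What remains is to exhibit a bounded set $A\subset X$ containing \emph{every} fixed point of $\Phi(\cdot,\cdot,\lambda)$ for \emph{all} $\lambda\in[0,1]$ at once; this is the third bullet and the core of the argument.

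For the uniform bound I would note that a fixed point satisfies \eqref{p1} with the nonlinear terms $v\cdot\nabla v$, $v\cdot\nabla\omega$ and the rotational couplings carrying a factor $\lambda\in[0,1]$. The entire chain of energy and higher-order estimates of Sections \ref{sec3.1} and \ref{sec3.2} --- including the auxiliary problems for $h$, $\theta$ and $\chi$ (Lemmas \ref{lem8}, \ref{lem9}, \ref{lem6}) --- carries over verbatim with these factors inserted: since $0\le\lambda\le 1$ each nonlinear contribution is only damped, so the constants and the smallness threshold for $\delta(t)$ established for $\lambda = 1$ stay valid for every $\lambda$. In particular, Lemmas \ref{lem18} and \ref{lem25} give, for $\delta(t)$ small enough and uniformly in $\lambda$,
\begin{align*}
	\norm{v}_{W^{2,1}_2(\Omega^t)} + \norm{\nabla p}_{L_2(\Omega^t)} &\leq c_{\alpha,\nu,\nu_r,I,P,\Omega}\big(E_{v,\omega}(t) + E_{h,\theta}(t) + \norm{f'}_{L_2(\Omega^t)} + \norm{v(t_0)}_{H^1(\Omega)} + 1\big)^3, \\
	\norm{\omega}_{W^{2,1}_2(\Omega^t)} &\leq c_{\alpha,\nu,\nu_r,I,P,\Omega}\big(E_{v,\omega}(t) + E_{h,\theta}(t) + \norm{f'}_{L_2(\Omega^t)} + \norm{v(t_0)}_{H^1(\Omega)} + \norm{\omega(t_0)}_{H^1(\Omega)} + 1\big)^3.
\end{align*}
Combining these with the compact embeddings \eqref{eq21} of $W^{2,1}_2(\Omega^t)$ into $\mathfrak{M}(\Omega^t)$ bounds the $X$-norm of any fixed point by a constant depending only on the data, which furnishes the required $A$.

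With all four hypotheses in place, Lemma \ref{lem24} yields a fixed point of $\Phi(\cdot,\cdot,1)$, that is, a strong solution $(v,\omega)\in W^{2,1}_2(\Omega^t)\times W^{2,1}_2(\Omega^t)$, and the two displayed inequalities of the lemma are exactly the bounds above. The main obstacle is the third step: one must ensure that the smallness of $\delta(t)$ is sufficient to absorb the a priori unknown factor $\norm{h}_{L_\infty(t_0,t;L_3(\Omega))}$ propagating through Lemmas \ref{lem8}, \ref{lem9} and \ref{lem6} \emph{uniformly} over $\lambda\in[0,1]$, and not merely for the target problem $\lambda = 1$. Because the $\lambda$-factors only weaken the nonlinearities, the same threshold works, but this is the point that genuinely relies on the structure of the preceding estimates.
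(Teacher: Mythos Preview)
Your proposal is correct and follows exactly the paper's approach: apply the Leray--Schauder principle (Lemma~\ref{lem24}) to the map $\Phi$ from \eqref{p10}, with Lemma~\ref{lem23} supplying continuity/compactness, Lemma~\ref{lem31} the uniqueness at $\lambda=0$, and Lemmas~\ref{lem25}, \ref{lem18} the uniform a~priori bound (hence the set $A$). Your write-up is in fact more careful than the paper's, which is a one-line invocation of these lemmas; in particular you make explicit the point that the estimates of Sections~\ref{sec3.1}--\ref{sec3.2} hold uniformly in $\lambda\in[0,1]$ because the factor $\lambda$ only damps the nonlinear terms.
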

\begin{proof}
	Lemmas \ref{lem30} and \ref{lem23} ensure that the assumptions of the Leary-Schauder theorem (see Lemma \ref{lem24}) are met. Hence, we obtain the existence of solution $(v,\omega) \in W^{2,1}_2(\Omega^t)\times W^{2,1}_2(\Omega^t)$ and by Lemma \ref{lem17} also the existence of $\nabla p \in L_2(\Omega^t)$. The estimates follow from Lemmas \ref{lem25} and \ref{lem18}.

\end{proof}

\section{Uniqueness of regular solutions}

In the previous Section we have proved the existence of regular solutions to problem \eqref{p1}. Now we would like to ask about their uniqueness. Like for the ordinary Navier-Stokes equations, the answer is positive (see Lemma below).

\begin{lem}\label{lem26}
	Suppose that $\delta(t)$ is small enough. Then, problem \eqref{p1} admits a unique solution.
\end{lem}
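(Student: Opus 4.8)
The plan is to reproduce the energy argument of Lemma \ref{lem31}, but now keeping the nonlinear terms and closing the estimate with Gronwall's inequality, the decisive extra ingredient being the higher regularity $v,\omega\in W^{2,1}_2(\Omega^t)$ established in Lemmas \ref{lem18} and \ref{lem25}. Suppose $(v^1,\omega^1,p^1)$ and $(v^2,\omega^2,p^2)$ are two regular solutions of \eqref{p1} with the same data (their existence for $\delta(t)$ small is Lemma \ref{lem30}). Put $V=v^1-v^2$, $\Theta=\omega^1-\omega^2$, $P=p^1-p^2$. Using the identities $v^1\cdot\nabla v^1-v^2\cdot\nabla v^2=v^1\cdot\nabla V+V\cdot\nabla v^2$ and $v^1\cdot\nabla\omega^1-v^2\cdot\nabla\omega^2=v^1\cdot\nabla\Theta+V\cdot\nabla\omega^2$, the pair $(V,\Theta)$ solves the system obtained from \eqref{p1} by replacing the nonlinear right-hand sides with these expressions, subject to the boundary conditions \eqref{p2} and the homogeneous initial data $V\vert_{t=t_0}=0$, $\Theta\vert_{t=t_0}=0$.

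Next I would test the equation for $V$ with $V$ and the equation for $\Theta$ with $\Theta$, integrate over $\Omega$ and add. Exactly as in Lemmas \ref{l4} and \ref{lem31}, the pressure term drops because $\Div V=0$ and $V\cdot n\vert_S=0$; the terms $\int_\Omega(v^1\cdot\nabla V)\cdot V$ and $\int_\Omega(v^1\cdot\nabla\Theta)\cdot\Theta$ vanish after integration by parts since $\Div v^1=0$ and $v^1\cdot n\vert_S=0$; and all boundary integrals generated by $\int_\Omega\triangle V\cdot V$, $\int_\Omega\triangle\Theta\cdot\Theta$ and $\int_\Omega\nabla\Div\Theta\cdot\Theta$ vanish by \eqref{p2} (see Lemma \ref{l1} and the computation in Lemma \ref{lem31}). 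What survives on the right-hand side are only the two coupling terms $2\nu_r\int_\Omega\Rot\Theta\cdot V$, $2\nu_r\int_\Omega\Rot V\cdot\Theta$ and the two genuinely nonlinear remainders $\int_\Omega(V\cdot\nabla v^2)\cdot V$ and $\int_\Omega(V\cdot\nabla\omega^2)\cdot\Theta$.

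The coupling terms are harmless: by the H\"older and Young inequalities they are bounded by $\epsilon\big(\norm{\Rot V}_{L_2(\Omega)}^2+\norm{\Rot\Theta}_{L_2(\Omega)}^2\big)+c_{\nu_r,\epsilon}\big(\norm{V}_{L_2(\Omega)}^2+\norm{\Theta}_{L_2(\Omega)}^2\big)$, and for $\epsilon$ small the gradient part is absorbed by the dissipation $(\nu+\nu_r)\norm{\Rot V}_{L_2(\Omega)}^2+\alpha\norm{\Rot\Theta}_{L_2(\Omega)}^2$ coming out of the left-hand side via Lemma \ref{l2}. For the nonlinear remainders I would estimate $\abs{\int_\Omega(V\cdot\nabla v^2)\cdot V}\le\norm{\nabla v^2}_{L_3(\Omega)}\norm{V}_{L_2(\Omega)}\norm{V}_{L_6(\Omega)}$ and $\abs{\int_\Omega(V\cdot\nabla\omega^2)\cdot\Theta}\le\norm{\nabla\omega^2}_{L_3(\Omega)}\norm{V}_{L_6(\Omega)}\norm{\Theta}_{L_2(\Omega)}$, then use $\norm{V}_{L_6(\Omega)}\le c_I\norm{V}_{H^1(\Omega)}\le c_{I,\Omega}\norm{\Rot V}_{L_2(\Omega)}$ (embedding together with Lemma \ref{l2}), and finally Young's inequality to obtain $\epsilon\norm{\Rot V}_{L_2(\Omega)}^2+c_\epsilon\,\Psi(t)\big(\norm{V}_{L_2(\Omega)}^2+\norm{\Theta}_{L_2(\Omega)}^2\big)$, where $\Psi(t):=\norm{\nabla v^2(t)}_{L_3(\Omega)}^2+\norm{\nabla\omega^2(t)}_{L_3(\Omega)}^2$.

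Choosing $\epsilon$ so that every gradient contribution is swallowed by the left-hand dissipation yields the differential inequality $\Dt\big(\norm{V}_{L_2(\Omega)}^2+\norm{\Theta}_{L_2(\Omega)}^2\big)\le c\,\Psi(t)\big(\norm{V}_{L_2(\Omega)}^2+\norm{\Theta}_{L_2(\Omega)}^2\big)$. The one point that goes beyond Lemma \ref{lem31}, and the place I expect the real work to sit, is the time-integrability $\Psi\in L_1(t_0,t)$. This is where the $W^{2,1}_2$-regularity is indispensable: by Remark \ref{rem6} (equivalently Lemma \ref{lem10}) the gradients satisfy $\nabla v^2,\nabla\omega^2\in L_{10/3}(\Omega^t)$, and since $\Omega$ is bounded $L_{10/3}(\Omega)\hookrightarrow L_3(\Omega)$, so $\norm{\nabla v^2}_{L_3(\Omega)},\norm{\nabla\omega^2}_{L_3(\Omega)}\in L_{10/3}(t_0,t)\hookrightarrow L_2(t_0,t)$ on the finite interval, whence $\Psi\in L_1(t_0,t)$. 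Because $\norm{V(t_0)}_{L_2(\Omega)}^2+\norm{\Theta(t_0)}_{L_2(\Omega)}^2=0$, Gronwall's inequality forces $\norm{V(t)}_{L_2(\Omega)}^2+\norm{\Theta(t)}_{L_2(\Omega)}^2=0$ for every $t$, i.e. $v^1=v^2$ and $\omega^1=\omega^2$; the momentum equation then gives $\nabla P=0$, so the pressure agrees up to a constant. This completes the uniqueness.
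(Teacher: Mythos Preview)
Your proposal is correct and follows essentially the same energy--Gronwall strategy as the paper's proof. The only cosmetic differences are: the paper writes the nonlinearity as $V\cdot\nabla v^1+v^2\cdot\nabla V$ (so it is $\nabla v^1$ and $\nabla\omega^2$ that appear in $\Psi$ rather than $\nabla v^2$, $\nabla\omega^2$), it combines the two coupling terms via Lemma~\ref{l1} into $4\nu_r\int_\Omega\Rot V\cdot\Theta$ so that the resulting $4\nu_r\norm{\Theta}_{L_2(\Omega)}^2$ is absorbed exactly by the left-hand side, and it obtains $\Psi\in L_1(t_0,t)$ from $H^2(\Omega)\hookrightarrow W^1_3(\Omega)$ instead of your $L_{10/3}$ route; none of this changes the argument.
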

\begin{proof}
	The proof is straightforward and is based on the Gronwall inequality for difference of solutions.

	Let $(v^1,\omega^1)$ and $(v^2,\omega^2)$ be two different solutions to problem \eqref{p1}. Let us denote $V = v^1 - v^2$, $\Theta = \omega^1 - \omega^2$ and $P = p^1 - p^2$. Then, the pair $(V,\Theta)$ is a solution to the problem
	\begin{equation}\label{eq25}
		\begin{aligned}
				&V_{,t}  - (\nu + \nu_r)\triangle V + \nabla P = - V\cdot \nabla v^1 - v^2\cdot \nabla V + 2\nu_r\Rot\Theta & &\text{in } \Omega^t, \\
			&\Div V = 0 & &\text{in } \Omega^t,\\
			&\begin{aligned}
				&\Theta_{,t} - \alpha \triangle \Theta - \beta\nabla\Div\Theta + 4\nu_r \Theta \\
				&\mspace{60mu} = -v^1\cdot \nabla \Theta - V\cdot \nabla \omega^2 + 2\nu_r\Rot V
			\end{aligned}& &\text{in } \Omega^t,\\
			&\Rot V \times n = 0, \qquad V\cdot n = 0 & &\text{on } S^t, \\
			&\Theta = 0 & &\text{on } S^t_1, \\
			&\Theta' = 0, \qquad \Theta_{3,x_3} = 0 & &\text{on } S^t_2, \\
			&V\vert_{t = t_0} = 0, \qquad \Theta\vert_{t = t_0} = 0 & &\text{on } \Omega\times\{t = t_0\}.
		\end{aligned}
	\end{equation}
	Now we multiply the first equation by $V$ and the third by $\Theta$. Next we integrate and since $V$ and $\Theta$ satisfy the boundary conditions \eqref{p2} and the assumption $(\mathbf{A})$ from Lemma \ref{l1} we easily see that
	\begin{multline*}
		\frac{1}{2}\Dt \int_{\Omega} \abs{V}^2 + \abs{\Theta}^2\, \ud x + (\nu + \nu_r) \norm{\Rot V}^2_{L_2(\Omega)} + \alpha \left(\norm{\Rot \Theta}^2_{L_2(\Omega)} + \norm{\Div \Theta}^2_{L_2(\Omega)}\right) \\
	+ \beta \norm{\Div \Theta}^2_{L_2(\Omega)} + 4\nu_r\norm{\Theta}^2_{L_2(\Omega)}	\\
		= -\int_{\Omega} V\cdot \nabla v^1 \cdot V\, \ud x + 4\nu_r \int_{\Omega} \Rot V \cdot \Theta\, \ud x + \int_{\Omega} V\cdot \nabla \omega^2 \cdot \Theta\, \ud x.
	\end{multline*}
	From the H\"older and the Young inequalities it follows that
	\begin{align*}
		-\int_{\Omega} V\cdot \nabla v^1 \cdot V\, \ud x &\leq \epsilon_1 \norm{V}^2_{L_6(\Omega)} + \frac{1}{4\epsilon_1} \norm{\nabla v^1}^2_{L_3(\Omega)}\norm{V}^2_{L_2(\Omega)}, \\
		4\nu_r \int_{\Omega} \Rot V\cdot \Theta\, \ud x &\leq 4\nu_r \epsilon_2 \norm{\Rot V}^2_{L_2(\Omega)} + \frac{\nu_r}{\epsilon_2} \norm{\Theta}^2_{L_2(\Omega)}, \\
		\int_{\Omega} V\cdot \nabla \omega^2 \cdot \Theta\, \ud x &\leq \epsilon_3 \norm{V}^2_{L_6(\Omega)} + \frac{1}{4\epsilon_3} \norm{\nabla \omega^2}^2_{L_3(\Omega)}\norm{\Theta}^2_{L_2(\Omega)}.
	\end{align*}
	Now we set $\epsilon_2 = \frac{1}{4} \Rightarrow \frac{\nu_r}{\epsilon_2} = 4\nu_r$ and utilize Lemma \ref{l2}, which implies that
	\begin{equation*}
		\frac{\nu}{c_{\Omega}} \norm{V}^2_{H^1(\Omega)} \leq \nu\norm{\Rot V}^2_{L_2(\Omega)}.
	\end{equation*}
	From the embedding $H^1 \hookrightarrow L_6$ and for $\epsilon_1 = \epsilon_3 = \frac{\nu}{4c_{\Omega}c_I}$ we deduce that
	 \begin{equation*}
		\frac{1}{2}\Dt \int_{\Omega} \abs{V}^2 + \abs{\Theta}^2\, \ud x + \frac{\nu}{2c_{\Omega}}\norm{\Rot V}^2_{L_2(\Omega)} \leq \frac{c_{I,\Omega}}{\nu} \left(\norm{\nabla v^1}^2_{L_3(\Omega)} \norm{V}^2_{L_2(\Omega)} + \norm{\nabla \omega^2}^2_{L_3(\Omega)} \norm{\Theta}^2_{L_2(\Omega)}\right)
	\end{equation*}
	From the Gronwall inequality we infer that
	\begin{multline*}
		\sup_{t_0 \leq t \leq t_1}\left(\norm{V(t)}^2_{L_2(\Omega)} + \norm{\Theta(t)}^2_{L_2(\Omega)}\right) \\
		\leq c_{\nu,I,\Omega}\exp\left(\norm{\nabla v^1}^2_{L_2(t_0,t_1;L_3(\Omega))} + \norm{\nabla \omega^2}^2_{L_2(t_0,t_1;L_3(\Omega))}\right) \left(\norm{V(t_0)}^2_{L_2(\Omega)} + \norm{\Theta(t_0)}^2_{L_2(\Omega)}\right).
	\end{multline*}
	Since $H^2(\Omega) \hookrightarrow W^1_6(\Omega) \hookrightarrow W^1_3(\Omega)$ we see that
	\begin{align*}
		\norm{\nabla v^1}^2_{L_2(t_0,t_1;L_3(\Omega))} &\leq c_{\Omega} \norm{v^1}^2_{W^{2,1}_2(\Omega^t)}, \\
		\norm{\nabla \omega^2}^2_{L_2(t_0,t_1;L_3(\Omega))} &\leq c_{\Omega}\norm{\omega^2}^2_{W^{2,1}_2(\Omega^t)},
	\end{align*}
	which justifies that the right-hand side is finite. In our case $V(t_0) = \Theta(t_0) = 0$. This implies that $V(t) = \Theta(t) \equiv 0$, which proves the uniqueness of solutions.
\end{proof}

\section{Proof of Theorem 1}

\begin{proof}[Proof of Theorem \ref{t1}]
	The existence of solutions follows from Lemma \ref{lem30}, whereas uniqueness from Lemma \ref{lem26}.
\end{proof}

\section{Final remarks}

We emphasize that the proof of the existence of regular solutions is free of constants that depend on time. As it can be regarded as a proof of global in time regular solutions because there are no restriction on $t$. If we have a solution on $(0,t)$ we can always extend it to $(0,t + 1)$. But we cannot simply put $t_0 = 0$, $t_1 = \infty$ because we would be confronted with improper integrals with respect to time. Among other things we adopt another approach which will allow us to consider the interval $(0,\infty)$ but we demonstrate it in forthcoming paper.

The author wishes to express his thanks to Professor Wojciech Zaj\k{a}czkowski for many stimulating conversations during preparation of this work.

\end{document}